\numberwithin{equation}{section}
\theoremstyle{plain}
\newtheorem{theorem}{Theorem}[section]
\newtheorem{Def}[theorem]{Definition}
\newtheorem{proposition}[theorem]{Proposition}
\theoremstyle{definition}
\theoremstyle{remark}
\newtheorem{remark}[theorem]{Remark}
\newtheorem{case[theorem]}{Case}
\def \R{{\mathbb R}}
\def \N{{\mathbb N}}
\def \C{{\mathbb C}}
\def\H{{\mathbb H}}
\def\supp{\hbox{supp\,}}
\def\norm#1.#2.{\lVert#1\rVert_{#2}}
\def\R{\mathbb R}
\def \M{{\mathcal M}}
\def \S{{\mathcal S}}
\def \W{{\mathcal W}}
\def \H{{\mathcal H}}
\title[Localization operators associated with the windowed Opdam--Cherednik transform]{Localization operators associated with the windowed Opdam--Cherednik transform on modulation spaces}
\author{Anirudha Poria}
\thanks{Research supported by ERC Starting Grant No. 713927.}
\address{Department of Mathematics, Bar-Ilan University, Ramat-Gan 5290002, Israel}
\email{anirudhamath@gmail.com}
\keywords{Opdam--Cherednik transform;  windowed Opdam--Cherednik transform; modulation spaces; localization operators; Schatten--von Neumann class; compact operators.}
\subjclass[2010]{Primary 47G30; Secondary 44A15, 42B35, 47B10.}
\date{\today}
\begin{document}
\maketitle

\begin{abstract} 
In this paper, we study a class of pseudodifferential operators known as time-frequency localization operators, which depend on a symbol $\varsigma$ and two windows functions $g_1$ and $g_2$. We first present some basic properties of the windowed Opdam--Cherednik transform. Then, we use modulation spaces associated with the Opdam--Cherednik transform as appropriate classes for symbols and windows, and study the boundedness and compactness of the localization operators associated with the windowed Opdam--Cherednik transform on modulation spaces. Finally, we show that these operators are in the Schatten--von Neumann class. 
\end{abstract}

\section{Introduction}  
Time-frequency localization operators are a mathematical tool to define a restriction of functions to a region in the time-frequency plane to extract time-frequency features. The role of these operators to localize a signal simultaneously in time and frequency domains, this can be seen as the uncertainty principle. The localization operators were introduced and studied by Daubechies \cite{dau88, dau90, pau88}, Ramanathan and Topiwala \cite{ram93}, and extensively investigated in \cite{fei02, won99, won02}. This class of operators occurs in various branches of pure and applied mathematics and has been studied by many authors. Localization operators are recognized as an important new mathematical tool and have found many applications to time-frequency analysis, quantum mechanics, the theory of differential equations, and signal processing (see \cite{cor03, mar02, now02, gro01, ram93, won02}). They are also known as anti-Wick operators, wave packets, Toeplitz operators, or Gabor multipliers (see \cite{now02, ber71, cor78, fei02}). For a detailed study of the theory of localization operators, we refer to the series of papers of Wong \cite{bog04, won2001, won2002, won2003}, and also the book of Wong \cite{won02}. Considerable attention has been devoted to study the localization operators to new contexts. For instance, localization operators were investigated in \cite{bac16} for the wavelet transform associated to the Riemann--Liouville operator, in \cite{bac2016} for the windowed Hankel transform and in \cite{mej20} for the k-Hankel wavelet transform. However, upto our knowledge, the localization operators have not been studied for the Opdam--Cherednik transform on modulation spaces. In this paper, we attempt to study the localization operators for the windowed Opdam--Cherednik transform on modulation spaces associated with the Opdam--Cherednik transform. 

The motivation to study the localization operators for the windowed Opdam--Cherednik transform on modulation spaces arises from the classical results on these operators for the short-time Fourier transform. Since the last decade modulation spaces have found to be very fruitful in various current trends (e.g., pseudo-differential operators,  partial differential equations, etc..) of investigation and have been widely used in several fields in analysis, physics and engineering.  Localization operators have implications in two main areas: quantum mechanics and signal analysis, and modulation spaces are widely used in these areas. We hope that the study of these operators on the modulation spaces makes a significant impact in these areas. Another important motivation to study the Jacobi--Cherednik operators arises from their relevance in the algebraic description of exactly solvable quantum many-body systems of Calogero--Moser--Sutherland type (see \cite{die00, hik96}) and they provide a useful tool in the study of special functions with root systems (see \cite{dun92, hec91}). These describe algebraically integrable systems in one dimension and have gained considerable interest in mathematical physics. Other motivation for the investigation of the Jacobi--Cherednik operator and the Opdam--Cherednik transform is to generalize the previous subjects which are bound with physics. For a more detailed discussion, we refer to \cite{mej16}.

Time-frequency localization operators were defined using the Schr\"odinger representation and the short-time Fourier transform, which suggests studying these operators as a part of time-frequency analysis. Modulation spaces were used as the appropriate function spaces for understanding these operators, as these spaces are associated to the short-time Fourier transform. However, without using the short-time Fourier transform, we define the localization operator using the windowed Opdam--Cherednik transform. As the harmonic analysis associated with the Opdam--Cherednik transform has known remarkable development (see \cite{and15, mej14, opd95, opd00, sch08}), the natural question to ask whether there exists the equivalent of the theory of localization operators in the framework of the Opdam--Cherednik transform. In this paper, we mainly concern the windowed Opdam--Cherednik transform under the setting of the Opdam--Cherednik transform. To measure the time-frequency concentration of functions and distributions, we use norms and function spaces that are associated with the Opdam--Cherednik transform. Here, we consider the modulation spaces associated with the Opdam--Cherednik transform, as the standard modulation spaces are not suited to this transform. Our main aim in this paper is to expose and study the boundedness and compactness of the localization operators associated with the windowed Opdam--Cherednik transform under suitable conditions on symbols and windows and show that these operators are in the Schatten--von Neumann class.

The paper is organized as follows. In Section \ref{sec2}, we recall some basic facts about the Jacobi--Cherednik operator and we discuss the main results for the Opdam--Cherednik transform. In Section \ref{sec3}, we discuss the modulation spaces associated with the Opdam--Cherednik transform. In Section \ref{sec4}, we define and study the windowed Opdam--Cherednik transform. Also, we present some basic properties of the windowed Opdam--Cherednik transform and show that this transform shares many properties with the short-time Fourier transform. In particular, we prove Plancherel's formula, orthogonality relation, and provide a reconstruction formula. Finally, in Section \ref{sec5}, we use modulation spaces associated with the Opdam--Cherednik transform as appropriate classes for symbols and windows, and study the boundedness and compactness of the localization operators associated with the windowed Opdam--Cherednik transform on modulation spaces. Also, we show that these operators are in the Schatten--von Neumann class.

\section{Harmonic analysis and the Opdam--Cherednik transform}\label{sec2}

In this section, we collect the necessary definitions and results from the harmonic analysis related to the Opdam--Cherednik transform. The main references for this section are \cite{and15, mej14, opd95, opd00, sch08}. However, we will use the same notation as in \cite{joh15, por21}.

Let $T_{\alpha, \beta}$ denote the Jacobi--Cherednik differential--difference operator (also called the Dunkl--Cherednik operator)
\[T_{\alpha, \beta} f(x)=\frac{d}{dx} f(x)+ \Big[ 
(2\alpha + 1) \coth x + (2\beta + 1) \tanh x \Big] \frac{f(x)-f(-x)}{2} - \rho f(-x), \]
where $\alpha, \beta$ are two parameters satisfying $\alpha \geq \beta \geq -\frac{1}{2}$ and $\alpha > -\frac{1}{2}$, and $\rho= \alpha + \beta + 1$. Let $\lambda \in \C$. The Opdam hypergeometric functions $G^{\alpha, \beta}_\lambda$ on $\R$ are eigenfunctions $T_{\alpha, \beta} G^{\alpha, \beta}_\lambda(x)=i \lambda  G^{\alpha, \beta}_\lambda(x)$ of $T_{\alpha, \beta}$ that are normalized such that $G^{\alpha, \beta}_\lambda(0)=1$. The eigenfunction $G^{\alpha, \beta}_\lambda$ is given by
\[G^{\alpha, \beta}_\lambda (x)= \varphi^{\alpha, \beta}_\lambda (x) - \frac{1}{\rho - i \lambda} \frac{d}{dx}\varphi^{\alpha, \beta}_\lambda (x)=\varphi^{\alpha, \beta}_\lambda (x)+ \frac{\rho+i \lambda}{4(\alpha+1)} \sinh 2x \; \varphi^{\alpha+1, \beta+1}_\lambda (x),  \]
where $\varphi^{\alpha, \beta}_\lambda (x)={}_2F_1 \left(\frac{\rho+i \lambda}{2}, \frac{\rho-i \lambda}{2} ; \alpha+1; -\sinh^2 x \right) $ is the hypergeometric function.

For every $ \lambda \in \C$ and $x \in  \R$, the eigenfunction
$G^{\alpha, \beta}_\lambda$ satisfy
\[ |G^{\alpha, \beta}_\lambda(x)| \leq C \; e^{-\rho |x|} e^{|\text{Im} (\lambda)| |x|},\] 
where $C$ is a positive constant. Since $\rho > 0$, we have
\begin{equation}\label{eq1}
|G^{\alpha, \beta}_\lambda(x)| \leq C \; e^{|\text{Im} (\lambda)| |x|}. 
\end{equation}
Let us denote by $C_c (\R)$ the space of continuous functions on $\R$ with compact support. The Opdam--Cherednik transform is the Fourier transform in the trigonometric Dunkl setting, and it is defined as follows.
\begin{Def}
Let $\alpha \geq \beta \geq -\frac{1}{2}$ with $\alpha > -\frac{1}{2}$. The Opdam--Cherednik transform $\mathcal{H}_{\alpha, \beta} (f)$ of a function $f \in C_c(\R)$ is defined by
\[ \H_{\alpha, \beta} (f) (\lambda)=\int_{\R} f(x)\; G^{\alpha, \beta}_\lambda(-x)\; A_{\alpha, \beta} (x) dx \quad \text{for all } \lambda \in \C, \] 
where $A_{\alpha, \beta} (x)= (\sinh |x| )^{2 \alpha+1} (\cosh |x| )^{2 \beta+1}$. The inverse Opdam--Cherednik transform for a suitable function $g$ on $\R$ is given by
\[ \H_{\alpha, \beta}^{-1} (g) (x)= \int_{\R} g(\lambda)\; G^{\alpha, \beta}_\lambda(x)\; d\sigma_{\alpha, \beta}(\lambda) \quad \text{for all } x \in \R, \]
where $$d\sigma_{\alpha, \beta}(\lambda)= \left(1- \dfrac{\rho}{i \lambda} \right) \dfrac{d \lambda}{8 \pi |C_{\alpha, \beta}(\lambda)|^2}$$ and 
$$C_{\alpha, \beta}(\lambda)= \dfrac{2^{\rho - i \lambda} \Gamma(\alpha+1) \Gamma(i \lambda)}{\Gamma \left(\frac{\rho + i \lambda}{2}\right)\; \Gamma\left(\frac{\alpha - \beta+1+i \lambda}{2}\right)}, \quad \lambda \in \C \setminus i \mathbb{N}.$$
\end{Def}

The Plancherel formula is given by 
\begin{equation}\label{eq03}
\int_{\R} |f(x)|^2 A_{\alpha, \beta}(x) dx=\int_\R \H_{\alpha, \beta} (f)(\lambda) \overline{\H_{\alpha, \beta} ( \check{f})(-\lambda)} \; d \sigma_{\alpha, \beta} (\lambda),
\end{equation}
where $\check{f}(x):=f(-x)$. Since the hypergeometric functions $G^{\alpha, \beta}_\lambda$  satisfy
$G^{\alpha, \beta}_{\lambda }(tx)=G^{\alpha, \beta}_{\lambda  t}(x)$, for every $ \lambda \in \C $ and $x, t \in  \R$, using the definition of $\mathcal{H}_{\alpha, \beta}$, we obtain $\mathcal{H}_{\alpha, \beta}(\check{f})(-\lambda)=\mathcal{H}_{\alpha, \beta}(f)(\lambda)$. Therefore, we can rewrite the Plancherel formula (\ref{eq03}) as follows: 
\begin{equation}\label{newpf}
\int_{\mathbb{R}}|f(x)|^{2} A_{\alpha, \beta}(x) d x=\int_{\mathbb{R}} \left| \mathcal{H}_{\alpha, \beta}(f)(\lambda) \right|^2 d \sigma_{\alpha, \beta}(\lambda).
\end{equation}

Let $L^p(\R,A_{\alpha, \beta} )$ (resp. $L^p(\R, \sigma_{\alpha, \beta} )$), $p \in [1, \infty] $, denote the $L^p$-spaces corresponding to the measure $A_{\alpha, \beta}(x) dx$ (resp. $d | \sigma_{\alpha, \beta} |(x)$). The Schwartz space $\S_{\alpha, \beta}(\R )=(\cosh x )^{-\rho} \S(\R)$ is defined as the space of all differentiable functions $f$ such that 
$$ \sup_{x \in \R} \; (1+|x|)^m e^{\rho |x|} \left|\frac{d^n}{dx^n} f(x) \right|<\infty,$$ 
for all $m, n \in \N_0 = \N \cup \{0\}$, equipped with the obvious seminorms. The Opdam--Cherednik transform $\H_{\alpha, \beta}$ and its inverse $\H_{\alpha, \beta}^{-1}$ are topological isomorphisms between the space $\S_{\alpha, \beta}(\R )$ and the space $\S(\R)$ (see \cite{sch08}, Theorem 4.1).

The generalized translation operator associated with the Opdam--Cherednik transform is defined by \cite{ank12}
\begin{equation}\label{eq04}
\tau_x^{(\alpha, \beta)} f(y)=\int_{\R} f(z) \; {d\mu}_{\;x, y}^{(\alpha, \beta)}(z),
\end{equation}
where ${d\mu}_{\;x, y}^{(\alpha, \beta)}$ is given by 
\begin{equation}\label{eq05}
{d\mu}_{\;x, y}^{(\alpha, \beta)}(z)=
\begin{cases} 
\mathcal{K}_{\alpha, \beta}(x,y,z)\; A_{\alpha, \beta}(z)\; dz & \text{if} \;\; xy \neq 0 \\
d \delta_x (z) & \text{if} \;\; y=0 \\
d \delta_y (z) & \text{if} \;\; x=0
\end{cases}  
\end{equation}
and
\begin{equation*}
\begin{aligned}
\mathcal{K}_{\alpha, \beta} {} & (x,y,z) 
=M_{\alpha, \beta} |\sinh x \cdot \sinh y \cdot \sinh z  |^{-2 \alpha} \int_0^\pi g(x, y, z, \chi)_+^{\alpha-\beta-1} 
\\
& \times \left[ 1 - \sigma^\chi_{x,y,z}+ \sigma^\chi_{x,z,y} + \sigma^\chi_{z,y,x} + \frac{\rho}{\beta+\frac{1}{2}} \coth x \cdot \coth y \cdot \coth z (\sin \chi)^2 \right] \times (\sin \chi)^{2 \beta}\; d \chi,
\end{aligned}
\end{equation*}
where \[ M_{\alpha, \beta}=\frac{\Gamma(\alpha+1)}{\sqrt{\pi} \Gamma(\alpha-\beta) \Gamma\left(\beta+\frac{1}{2}\right)} ,\] 
if $x, y, z \in \R \setminus \{0\} $ satisfy the triangular inequality  $||x|-|y||<|z|<|x|+|y|$, and $\mathcal{K}_{\alpha, \beta} (x,y,z) =0$ otherwise. Here 
\[ \sigma^\chi_{x,y,z}=
\begin{cases} 
\frac{\cosh x \cdot \cosh y - \cosh z \cdot \cos \chi}{\sinh x \cdot \sinh y} & \text{if} \;\; xy \neq 0 \\
0 & \text{if} \;\; xy = 0 
\end{cases}
\quad \text{for} \; x, y, z \in \R, \; \chi \in [0,\pi], \]
$g(x, y, z, \chi) = 1- \cosh^2 x - \cosh^2 y - \cosh^2 z + 2 \cosh x \cdot \cosh y \cdot \cosh z \cdot \cos \chi$, and 
\[ g_+=
\begin{cases} 
g &  \text{if} \;\;  g> 0 \\
0 & \text{if} \;\;  g \leq 0.
\end{cases} \]
The kernel $\mathcal{K}_{\alpha, \beta} (x,y,z)$ satisfies the following symmetry properties:
   
$\mathcal{K}_{\alpha, \beta} (x,y,z)=\mathcal{K}_{\alpha, \beta} (y,x,z)$, $\mathcal{K}_{\alpha, \beta} (x,y,z)=\mathcal{K}_{\alpha, \beta} (-z,y,-x)$, $\mathcal{K}_{\alpha, \beta} (x,y,z)=\mathcal{K}_{\alpha, \beta} (x,-z,-y)$.

Next, we recall some basic properties of $\tau_{x}^{(\alpha, \beta)}$ from \cite{ank12}. For every $x,y \in \R$, we have
\begin{equation}\label{eq06}
\tau_x^{(\alpha, \beta)} f(y) = \tau_y^{(\alpha, \beta)} f(x),
\end{equation}
and 
\begin{equation}\label{eq07}
\H_{\alpha, \beta}(\tau_x^{(\alpha, \beta)} f )(\lambda)=G_\lambda^{\alpha, \beta} (x) \; \H_{\alpha, \beta} (f)(\lambda),
\end{equation}
for $f \in C_c(\R)$.

If $f \in L^1(\R,A_{\alpha, \beta} )$, then
\begin{eqnarray}\label{eq08}
\int_\R \tau_x^{(\alpha, \beta)} f(y) \; A_{\alpha, \beta}(y) \; dy 
& = & \int_\R \left( \int_{\R} f(z) \; {d\mu}_{\;x, y}^{(\alpha, \beta)}(z) \right) \; A_{\alpha, \beta}(y) \; dy \nonumber \\
&=& \int_\R \left( \int_{\R} f(z) \; \mathcal{K}_{\alpha, \beta}(x,y,z)\; A_{\alpha, \beta}(z)\; dz \right) \; A_{\alpha, \beta}(y) \; dy \nonumber \\
&=& \int_\R f(z) \left( \int_{\R} \; \mathcal{K}_{\alpha, \beta} (x,-z,-y) \; A_{\alpha, \beta}(y) \; dy \right) \; A_{\alpha, \beta}(z)\; dz \nonumber \\
&=& \int_\R f(z) \left( \int_{\R} \; \mathcal{K}_{\alpha, \beta} (x,-z,y) \; A_{\alpha, \beta}(y) \; dy \right) \; A_{\alpha, \beta}(z)\; dz \nonumber \\
&=& \int_\R f(z) \left( \int_{\R} \; 
{d\mu}_{\;x, -z}^{(\alpha, \beta)}(y) \right) \; A_{\alpha, \beta}(z)\; dz \nonumber \\  
&=& \int_\R f(y) \; A_{\alpha, \beta}(y)\; dy \qquad \qquad \left(\text{since} \int_{\R} \;  {d\mu}_{\;x, -z}^{(\alpha, \beta)}(y)=1 \right). \qquad \qquad
\end{eqnarray}
For every $f \in L^p(\R,A_{\alpha, \beta} )$ and every $x \in \R$, the function $\tau_x^{(\alpha, \beta)} f$ belongs to the space $L^p(\R,A_{\alpha, \beta} )$ and 
\begin{equation}\label{eq09}
\left\Vert \tau_x^{(\alpha, \beta)} f \right\Vert_{L^p(\R,A_{\alpha, \beta} )} \leq C_{\alpha, \beta} \left\Vert f \right\Vert_{L^p(\R,A_{\alpha, \beta} )}, 
\end{equation}
where $C_{\alpha, \beta}$ is a positive constant.

The convolution product associated with the Opdam--Cherednik transform is defined for two suitable functions $f$ and $g$  by \cite{ank12}
\[ (f *_{\alpha, \beta} g) (x)=\int_\R \tau_x^{(\alpha, \beta)} f(-y) \;g(y) \; A_{\alpha, \beta}(y) \; dy  \]
and
\begin{equation}\label{eq10}
\H_{\alpha, \beta} (f *_{\alpha, \beta} g)= \H_{\alpha, \beta} (f) \; \H_{\alpha, \beta} (g).
\end{equation}

\section{Modulation spaces associated with the Opdam--Cherednik transform}\label{sec3}
The modulation spaces were introduced by Feichtinger \cite{fei03, fei97}, by imposing integrability conditions on the short-time Fourier transform (STFT) of tempered distributions. More specifically, for $x, w \in \R$, let $M_w$ and $T_x$ denote the operators of modulation and translation. Then, the STFT of a function $f$ with respect to a window function $g \in  \S(\R)$ is defined by
\[ V_g f (x,w)=\langle f, M_w T_x g \rangle=\int_{\R} f(t) \overline{g(t-x)} e^{-2 \pi i w t} dt, \quad (x,y) \in \R^2. \] 

Here we are interested in modulation spaces with respect to measure $A_{\alpha, \beta}(x) dx$.  
\begin{Def}
Fix a non-zero window $g \in \mathcal{S}(\R)$, and $1 \leq p,q \leq \infty$. Then the modulation space $M^{p,q}(\R, A_{\alpha, \beta})$  consists of all tempered distributions $f \in \mathcal{S'}(\R)$ such that $V_g f \in L^{p,q}(\R^2, A_{\alpha, \beta})$. The norm on $M^{p,q}(\R, A_{\alpha, \beta})$ is 
\begin{eqnarray*}
\Vert f \Vert_{M^{p,q}(\R, A_{\alpha, \beta})}
&=& \Vert V_g f \Vert_{L^{p,q}(\R^2, A_{\alpha, \beta})} \\
&=& \bigg( \int_{\R} \bigg( \int_{\R} |V_gf(x,w)|^p A_{\alpha, \beta}(x) dx \bigg)^{q/p} A_{\alpha, \beta}(w) dw \bigg)^{1/q} < \infty,
\end{eqnarray*}
with the usual adjustments if $p$ or $q$ is infinite. If $p=q$, then we write $M^p(\R, A_{\alpha, \beta})$ instead of $M^{p,p}(\R, A_{\alpha, \beta})$. Also,  we denote by $M^p(\R, \sigma_{\alpha, \beta})$ the modulation space corresponding to the measure $d |\sigma_{\alpha, \beta}|(x)$ and $M^p(\R)$ the modulation space corresponding to the Lebesgue measure $dx$.   
\end{Def}
The definition of $M^{p, q}(\R, A_{\alpha, \beta})$ is independent of the choice of $g$ in the sense that each different choice of $g$ defines an equivalent norm on $M^{p, q}(\R, A_{\alpha, \beta})$. Each modulation space is a Banach space. For $p=q=2$, we have that $M^2(\R, A_{\alpha, \beta}) =L^2(\R, A_{\alpha, \beta}).$ For other $p=q$, the space $M^p(\R, A_{\alpha, \beta})$ is not $L^p(\R, A_{\alpha, \beta})$. In fact for $p=q>2$, the space $M^p(\R, A_{\alpha, \beta})$ is a superset of $L^2(\R, A_{\alpha, \beta})$. Here, we collect some basic properties and inclusion relations of modulation spaces with respect to $A_{\alpha, \beta}$. We define the space of special windows $\mathcal{S}_{\mathcal{C}}(\mathbb{R})$ by 
\begin{eqnarray*}
&& \mathcal{S}_{\mathcal{C}} \left( \mathbb{R}  \right)=\left\{ f \in L^{2} (\mathbb{R} ) :  \; f=V_{g}^{*} F=\iint_{\mathbb{R}^{2 }} F(x, w)\; M_{w} T_{x} g  \; dx \; dw, \right. \\
&& \left. \qquad \qquad  \qquad \qquad  \qquad  \text{where} \; F \in L^{\infty} (\mathbb{R}^{2}) \; \text{and} \; \supp F  \; \text{is compact} \right\}.
\end{eqnarray*}
Then $\mathcal{S}_{\mathcal{C}}\left(\mathbb{R}\right) \subseteq \mathcal{S}\left(\mathbb{R}\right)$ and 
$\mathcal{S}_{\mathcal{C}}\left(\mathbb{R}\right)$ is dense in $M^1(\R, A_{\alpha, \beta})$ (see \cite{gro01}, Lemma 11.4.1). Let $B$ be a Banach space of tempered distributions with the following properties: (1) $B$ is invariant under time-frequency shifts, and $\left\|T_{x} M_{w} f\right\|_{B} \leq C  \|f\|_{B}$ for all $f \in B$, (2) $M^1(\R, A_{\alpha, \beta}) \cap B \neq\{0\}$. Then $M^1(\R, A_{\alpha, \beta})$ is embedded in $B$ (see \cite{gro01}, Theorem 12.1.9). Also, $M^p(\R, A_{\alpha, \beta})$ is invariant under time-frequency shifts and $\left\|T_{x} M_{w} f\right\|_{M^p(\R, A_{\alpha, \beta})} \leq C  \|f\|_{M^p(\R, A_{\alpha, \beta})}$. 
Since $\mathcal{S}_{\mathcal{C}}\left(\mathbb{R} \right) \subseteq M^1(\R, A_{\alpha, \beta}) \cap M^p(\R, A_{\alpha, \beta})$, using Corollary 12.1.10 from \cite{gro01}, we obtain  the following inclusions
\[ \mathcal{S}(\R) \subset M^1(\R, A_{\alpha, \beta}) \subset M^2(\R, A_{\alpha, \beta})=L^2(\R, A_{\alpha, \beta}) \subset M^\infty(\R, A_{\alpha, \beta}) \subset \mathcal{S'}(\R). \]
In particular, we have $M^p(\R, A_{\alpha, \beta}) \hookrightarrow L^p(\R, A_{\alpha, \beta})$ for $1 \leq p \leq 2$, and  $L^p(\R, A_{\alpha, \beta}) \hookrightarrow M^p(\R, A_{\alpha, \beta})$ for $2 \leq p \leq \infty$. Furthermore, the dual of a modulation space is also a modulation space, if $p < \infty$, $q < \infty$, $(M^{p, q}(\R, A_{\alpha, \beta}))^{'} =M^{p', q'}(\R, A_{\alpha, \beta})$, where $p', \; q'$ denote the dual exponents of $p$ and $q$, respectively. We refer to Gr\"ochenig's book \cite{gro01} for further properties and uses of modulation spaces. 

\section{The windowed Opdam--Cherednik transform}\label{sec4}

Let $g \in L^2(\R,A_{\alpha, \beta} )$ and $\xi \in \R$, the modulation operator of $g$ associated with the Opdam--Cherednik transform is defined by 
\begin{equation}\label{eq11}
\M^{(\alpha, \beta)}_\xi g=\H^{-1}_{\alpha, \beta} \left( \sqrt{\tau_\xi^{(\alpha, \beta)} |\H_{\alpha, \beta}(g)|^2 } \right) .
\end{equation}
Then, for every $g \in L^2(\R,A_{\alpha, \beta} )$ and $\xi \in \R$, by using the Plancherel formula (\ref{eq03}) and 
the translation invariance of the Plancherel measure 
$d \sigma_{\alpha, \beta}$,  we obtain
\begin{equation}\label{eq12}
\left\Vert \M^{(\alpha, \beta)}_\xi g \right\Vert_{L^2(\R,A_{\alpha, \beta} )}=\left\Vert g \right\Vert_{L^2(\R,A_{\alpha, \beta} )}.
\end{equation}
Now, for a non-zero window function $g \in L^2(\R,A_{\alpha, \beta} )$ and $(x, \xi) \in \R^2$, we consider the function $g_{x, \xi}^{(\alpha, \beta)}$ defined by 
\begin{equation}\label{eq13}
g_{x, \xi}^{(\alpha, \beta)}= \tau_x^{(\alpha, \beta)} \M^{(\alpha, \beta)}_\xi g.
\end{equation}
For any function $f \in L^2(\R,A_{\alpha, \beta} )$, we define the windowed Opdam--Cherednik transform by
\begin{equation}\label{eq14}
\W^{(\alpha, \beta)}_g(f)(x,\xi)=\int_\R f(s) \; \overline{g_{x, \xi}^{(\alpha, \beta)}(-s)} \;  A_{\alpha, \beta}(s) \; ds, \quad (x,\xi) \in \R^2,
\end{equation}
which can be also written in the form
\begin{equation}\label{eq15}
\W^{(\alpha, \beta)}_g(f)(x,\xi)= \left(f *_{\alpha,\beta} \overline{ \M^{(\alpha, \beta)}_\xi g} \right)(x).
\end{equation}
We define the measure $A_{\alpha, \beta} \otimes \sigma_{\alpha, \beta}$ on $\R^2$ by 
\begin{equation}\label{eq16}
d(A_{\alpha, \beta} \otimes \sigma_{\alpha, \beta})(x, \xi)= A_{\alpha, \beta}(x)  dx \; d\sigma_{\alpha, \beta}(\xi).
\end{equation}

The windowed Opdam--Cherednik transform satisfies the following properties. 
\begin{proposition}
Let $g \in L^2(\R,A_{\alpha, \beta} )$ be a non-zero window function.  Then we have

$(1)$ $($Plancherel's formula$)$ For every $f \in L^2(\R,A_{\alpha, \beta})$, 
\begin{equation}\label{eq17}
\left\Vert \W^{(\alpha, \beta)}_g(f) \right\Vert_{L^2(\R^2, \; A_{\alpha, \beta}\otimes \sigma_{\alpha, \beta})} = \Vert f \Vert_{L^2(\R,A_{\alpha, \beta})} \; \Vert g \Vert_{L^2(\R,A_{\alpha, \beta})}.
\end{equation}

$(2)$ $($Orthogonality relation$)$  For every $f , h \in L^2(\R,A_{\alpha, \beta})$, we have
\begin{equation}\label{eq18}
\iint_{\R^2} \W^{(\alpha, \beta)}_g(f)(x, \xi) \; \overline{ \W^{(\alpha, \beta)}_g(h)(x, \xi)} \; d(A_{\alpha, \beta} \otimes \sigma_{\alpha, \beta})(x, \xi)  = \Vert g \Vert^2_{L^2(\R,A_{\alpha, \beta})} \int_\R f(s) \overline{h(s)} \;A_{\alpha, \beta}(s) \;ds.
\end{equation}

$(3)$ $($Reproducing kernel Hilbert space$)$ The space $\W^{(\alpha, \beta)}_g(L^2(\R,A_{\alpha, \beta}))$ is a reproducing kernel Hilbert space in $L^2(\R^2,A_{\alpha, \beta} \otimes \sigma_{\alpha, \beta} )$ with kernel function $K_g$ defined by 
\begin{eqnarray}\label{eq19}
K_g ((x', \xi');(x, \xi))
& = & \frac{1}{\Vert g \Vert^2_{L^2(\R,A_{\alpha, \beta})}} \; \left( g_{x, \xi}^{(\alpha, \beta)} (- \; \cdot) *_{\alpha, \beta} \overline{  \M^{(\alpha, \beta)}_{\xi'} g} \right)(x') \nonumber \\ 
& = & \frac{1}{\Vert g \Vert^2_{L^2(\R,A_{\alpha, \beta})}} \; \W^{(\alpha, \beta)}_g \left( g_{x, \xi}^{(\alpha, \beta)} (- \; \cdot) \right) (x', \xi').
\end{eqnarray}
Furthermore, the kernel is pointwise bounded
\begin{equation}\label{eq20}
\left| K_g ((x', \xi');(x, \xi)) \right| \leq C_{\alpha, \beta}, \quad \text{for all}\; (x, \xi);   (x', \xi') \in \R^2,
\end{equation}
where $C_{\alpha, \beta}$ is a positive constant. 
\end{proposition}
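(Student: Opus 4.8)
The idea is to deduce all three parts from the convolution representation \eqref{eq15}, the product formula \eqref{eq10}, the Plancherel identity \eqref{newpf}, and the elementary facts \eqref{eq06}, \eqref{eq09}, \eqref{eq11}, \eqref{eq12} about the translation and modulation operators.

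For \emph{(1)}, I would fix $\xi$ and set $h_\xi:=\overline{\M^{(\alpha,\beta)}_\xi g}$, so that $\W^{(\alpha,\beta)}_g(f)(\cdot,\xi)=f*_{\alpha,\beta}h_\xi$ by \eqref{eq15}. Applying the product formula \eqref{eq10} and then \eqref{newpf} to this convolution gives $\int_\R|\W^{(\alpha,\beta)}_g(f)(x,\xi)|^2A_{\alpha,\beta}(x)\,dx=\int_\R|\H_{\alpha,\beta}(f)(\lambda)|^2\,|\H_{\alpha,\beta}(h_\xi)(\lambda)|^2\,d\sigma_{\alpha,\beta}(\lambda)$. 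Integrating this in $\xi$ against $d\sigma_{\alpha,\beta}$ and using Tonelli's theorem (the integrand is nonnegative), \eqref{eq17} will follow once I prove that $\Phi(\lambda):=\int_\R|\H_{\alpha,\beta}(h_\xi)(\lambda)|^2\,d\sigma_{\alpha,\beta}(\xi)$ is the constant $\|g\|^2_{L^2(\R,A_{\alpha,\beta})}$. To evaluate $\Phi$, note that since $T_{\alpha,\beta}$ has real coefficients, uniqueness of the normalized eigenfunction yields $\overline{G^{\alpha,\beta}_\lambda(x)}=G^{\alpha,\beta}_{-\lambda}(x)$ for $\lambda\in\R$, hence $|\H_{\alpha,\beta}(\overline u)(\lambda)|=|\H_{\alpha,\beta}(u)(-\lambda)|$; taking $u=\M^{(\alpha,\beta)}_\xi g$ and using \eqref{eq11} in the form $\H_{\alpha,\beta}(\M^{(\alpha,\beta)}_\xi g)=\sqrt{\tau_\xi^{(\alpha,\beta)}|\H_{\alpha,\beta}(g)|^2}$ gives $|\H_{\alpha,\beta}(h_\xi)(\lambda)|^2=\big(\tau_\xi^{(\alpha,\beta)}|\H_{\alpha,\beta}(g)|^2\big)(-\lambda)$, which by \eqref{eq06} equals $\big(\tau_{-\lambda}^{(\alpha,\beta)}|\H_{\alpha,\beta}(g)|^2\big)(\xi)$. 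Integrating in $\xi$ and invoking the translation invariance of the measure $\sigma_{\alpha,\beta}$ (the same property behind \eqref{eq12}) together with \eqref{newpf} gives $\Phi(\lambda)=\int_\R|\H_{\alpha,\beta}(g)(\xi)|^2\,d\sigma_{\alpha,\beta}(\xi)=\|g\|^2_{L^2(\R,A_{\alpha,\beta})}$, and a further application of \eqref{newpf} to $f$ completes \eqref{eq17}. Part \emph{(2)} is then immediate by polarization: both sides of \eqref{eq18} are sesquilinear forms in $(f,h)$, bounded thanks to \eqref{eq17} and Cauchy--Schwarz, and by \eqref{eq17} they agree when $h=f$, hence for all $f,h$.

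For \emph{(3)}, \eqref{eq17} shows that $\|g\|^{-1}_{L^2(\R,A_{\alpha,\beta})}\,\W^{(\alpha,\beta)}_g$ is an isometry of $L^2(\R,A_{\alpha,\beta})$ into $L^2(\R^2,A_{\alpha,\beta}\otimes\sigma_{\alpha,\beta})$, so $\W^{(\alpha,\beta)}_g(L^2(\R,A_{\alpha,\beta}))$ is closed and hence a Hilbert space. For $F=\W^{(\alpha,\beta)}_g(f)$ in this space, definition \eqref{eq14} reads $F(x',\xi')=\big\langle f,\,g^{(\alpha,\beta)}_{x',\xi'}(-\,\cdot\,)\big\rangle_{L^2(\R,A_{\alpha,\beta})}$, and since $g^{(\alpha,\beta)}_{x',\xi'}(-\,\cdot\,)\in L^2(\R,A_{\alpha,\beta})$ by \eqref{eq13}, \eqref{eq09} and \eqref{eq12}, the orthogonality relation \eqref{eq18} rewrites this as $F(x',\xi')=\big\langle F,\,K_g(\,\cdot\,;(x',\xi'))\big\rangle_{L^2(\R^2,A_{\alpha,\beta}\otimes\sigma_{\alpha,\beta})}$ with $K_g$ given by \eqref{eq19}; the first expression in \eqref{eq19} is nothing but \eqref{eq15} applied to $g^{(\alpha,\beta)}_{x,\xi}(-\,\cdot\,)$. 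For the pointwise bound \eqref{eq20}, Cauchy--Schwarz in \eqref{eq14} together with \eqref{eq09}, \eqref{eq12} and the evenness of $A_{\alpha,\beta}$ gives $|\W^{(\alpha,\beta)}_g(u)(x',\xi')|\le\|u\|_{L^2(\R,A_{\alpha,\beta})}\,\big\|g^{(\alpha,\beta)}_{x',\xi'}\big\|_{L^2(\R,A_{\alpha,\beta})}\le C_{\alpha,\beta}\,\|u\|_{L^2(\R,A_{\alpha,\beta})}\,\|g\|_{L^2(\R,A_{\alpha,\beta})}$ for every $u$ and every $(x',\xi')$; applying this with $u=g^{(\alpha,\beta)}_{x,\xi}(-\,\cdot\,)$, whose norm is in turn at most $C_{\alpha,\beta}\|g\|_{L^2(\R,A_{\alpha,\beta})}$, and dividing by $\|g\|^2_{L^2(\R,A_{\alpha,\beta})}$ yields $\big|K_g((x',\xi');(x,\xi))\big|\le C_{\alpha,\beta}^2$.

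The one step that needs genuine care is the computation of $\Phi(\lambda)$ in part (1): it hinges on the reflection identity $\overline{G^{\alpha,\beta}_\lambda}=G^{\alpha,\beta}_{-\lambda}$, the commutativity \eqref{eq06}, and the translation invariance of the Plancherel measure $\sigma_{\alpha,\beta}$, which together make the $\lambda$-dependence disappear after the $\xi$-integration. Once $\Phi$ is identified, parts (2) and (3) follow by routine applications of polarization, Cauchy--Schwarz, and the norm estimates \eqref{eq09}, \eqref{eq12}.
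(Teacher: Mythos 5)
Your proposal is correct and follows essentially the same route as the paper's proof: the convolution representation \eqref{eq15} combined with the product formula \eqref{eq10}, Plancherel's formula \eqref{newpf}, the commutativity \eqref{eq06} and the translation invariance of $\sigma_{\alpha,\beta}$ for part (1); polarization for part (2); and the orthogonality relation together with the Cauchy--Schwarz inequality for part (3). The only difference is that you track the complex conjugation explicitly via $\overline{G^{\alpha,\beta}_\lambda}=G^{\alpha,\beta}_{-\lambda}$, which introduces a reflection $\lambda\mapsto-\lambda$ that the paper suppresses; this is harmless, since it washes out under the translation-invariant $\xi$-integration.
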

\begin{proof}
(1) Using Plancherel's formula (\ref{newpf}), and relations (\ref{eq10}) and (\ref{eq15}), we obtain 
\begin{eqnarray*}
&& \iint_{\R^2} \left|\W^{(\alpha, \beta)}_g(f)(x, \xi) \right|^2 \; d(A_{\alpha, \beta} \otimes \sigma_{\alpha, \beta}) (x, \xi) \\
&&= \iint_{\R^2} \left| \left(f *_{\alpha,\beta} \overline{ \M^{(\alpha, \beta)}_\xi g} \right)(x) \right|^2 \; A_{\alpha, \beta}(x)  dx \; d\sigma_{\alpha, \beta}(\xi) \\
&&= \iint_{\R^2} \left| \H_{\alpha, \beta} \left(f *_{\alpha,\beta} \overline{ \M^{(\alpha, \beta)}_\xi g} \right)(\lambda) \right|^2 \;  d \sigma_{\alpha, \beta}(\lambda) \; d\sigma_{\alpha, \beta}(\xi) \\
&&= \iint_{\R^2} \left| \H_{\alpha, \beta} (f)(\lambda) \right|^2 \; \left| \H_{\alpha, \beta} \left( \overline{ \M^{(\alpha, \beta)}_\xi g} \right)(\lambda) \right|^2 \;  d \sigma_{\alpha, \beta}(\lambda) \; d\sigma_{\alpha, \beta}(\xi)\\
&&= \iint_{\R^2} \left| \H_{\alpha, \beta} (f)(\lambda) \right|^2 \; \tau^{(\alpha, \beta)}_\xi \left| \H_{\alpha, \beta} (g) \right|^2 (\lambda)\;  d \sigma_{\alpha, \beta}(\lambda) \; d\sigma_{\alpha, \beta}(\xi)\\
&&= \int_{\R} \left| \H_{\alpha, \beta} (f)(\lambda) \right|^2 \; \int_{\R} \tau^{(\alpha, \beta)}_\lambda \left| \H_{\alpha, \beta} (g) \right|^2 (\xi)\;  d \sigma_{\alpha, \beta}(\xi) \; d\sigma_{\alpha, \beta}(\lambda)\\
&&= \int_{\R} \left| \H_{\alpha, \beta} (f)(\lambda) \right|^2  d\sigma_{\alpha, \beta}(\lambda) \; \int_{\R} \left| \H_{\alpha, \beta} (g) (\xi) \right|^2   d \sigma_{\alpha, \beta}(\xi) \\
&&= \Vert f \Vert^2_{L^2(\R,A_{\alpha, \beta})} \; \Vert g \Vert^2_{L^2(\R,A_{\alpha, \beta})}.
\end{eqnarray*}

(2) Using the polarization identity and Plancherel's formula (\ref{eq17}), we obtain the result.

(3) From relations (\ref{eq14}) and (\ref{eq18}), we obtain that 
\begin{eqnarray*}
&& \W^{(\alpha, \beta)}_g(f)(x,\xi)\\
&& = \frac{1}{\Vert g \Vert^2_{L^2(\R,A_{\alpha, \beta})}} \iint_{\R^2} \W^{(\alpha, \beta)}_g(f)(x', \xi') \; \overline{ \W^{(\alpha, \beta)}_g \left(g_{x, \xi}^{(\alpha, \beta)}(- \; \cdot)\right)(x', \xi')}  \; d(A_{\alpha, \beta} \otimes \sigma_{\alpha, \beta})(x', \xi') \\
&& = \left\langle \W^{(\alpha, \beta)}_g(f), \;K_g((\cdot , \cdot);(x,\xi))  \right\rangle_{L^2(\R^2,A_{\alpha, \beta} \otimes \sigma_{\alpha, \beta} )} , 
\end{eqnarray*}
where $\langle \cdot , \cdot \rangle_{L^2(\R^2,A_{\alpha, \beta} \otimes \sigma_{\alpha, \beta} )}$ denotes the inner product of $L^2(\R^2,A_{\alpha, \beta} \otimes \sigma_{\alpha, \beta} )$ and $K_g$ is the function on $\R^2$ defined by  
\begin{eqnarray*}
K_g ((x', \xi');(x, \xi))
& = & \frac{1}{\Vert g \Vert^2_{L^2(\R,A_{\alpha, \beta})}} \; \W^{(\alpha, \beta)}_g \left( g_{x, \xi}^{(\alpha, \beta)} (- \; \cdot) \right) (x', \xi') \\
& = & \frac{1}{\Vert g \Vert^2_{L^2(\R,A_{\alpha, \beta})}} \; \left( g_{x, \xi}^{(\alpha, \beta)} (- \; \cdot) *_{\alpha, \beta} \overline{  \M^{(\alpha, \beta)}_{\xi'} g} \right)(x'). 
\end{eqnarray*}
Also, for every $(x, \xi) \in \R^2$, by using Plancherel's formula (\ref{eq17}), we obtain 
\[\left\Vert K_g((\cdot , \cdot);(x,\xi)) \right\Vert_{L^2(\R^2,A_{\alpha, \beta} \otimes \sigma_{\alpha, \beta} )} \leq C_{\alpha, \beta}.  \]
Finally, by the Cauchy--Schwarz inequality, for every $(x, \xi);  (x', \xi') \in \R^2$ we have
\begin{eqnarray*}
|K_g ((x', \xi');(x, \xi)) | 
& \leq & \frac{1}{\Vert g \Vert^2_{L^2(\R,A_{\alpha, \beta})}} \int_\R \left|g_{x, \xi}^{(\alpha, \beta)} (- s)\right| \; \left|g_{x', \xi'}^{(\alpha, \beta)} (- s) \right| \; A_{\alpha, \beta}(s) \; ds \\
& \leq &  \frac{1}{\Vert g \Vert^2_{L^2(\R,A_{\alpha, \beta})}} \; \left\Vert g_{x, \xi}^{(\alpha, \beta)} \right\Vert_{L^2(\R,A_{\alpha, \beta})} \left\Vert g_{x', \xi'}^{(\alpha, \beta)} \right\Vert_{L^2(\R,A_{\alpha, \beta})}         
= C_{\alpha, \beta}.
\end{eqnarray*}
This proves that the kernel $K_g \in  L^2(\R^2,A_{\alpha, \beta} \otimes \sigma_{\alpha, \beta})$ and is bounded.
\end{proof}
  
\begin{theorem}
Let $g \in L^2(\R,A_{\alpha, \beta} )$ be a non-zero window function. Then for every $f \in L^2(\R,A_{\alpha, \beta} )$, we have  

$(1)$   
\begin{equation}\label{eq21}
\left\Vert \W^{(\alpha, \beta)}_g(f) \right\Vert_{L^\infty(\R^2,A_{\alpha, \beta} \otimes \sigma_{\alpha, \beta})} \leq C_{\alpha, \beta} \Vert f \Vert_{L^2(\R,A_{\alpha, \beta})} \; \Vert g \Vert_{L^2(\R,A_{\alpha, \beta})}.
\end{equation}

$(2)$ The function $\W^{(\alpha, \beta)}_g(f) \in L^p(\R^2,A_{\alpha, \beta} \otimes \sigma_{\alpha, \beta})$, $p \in [2, \infty)$ and 
\begin{equation}\label{eq22}
\left\Vert \W^{(\alpha, \beta)}_g(f) \right\Vert_{L^p(\R^2,A_{\alpha, \beta} \otimes \sigma_{\alpha, \beta})} \leq C_{\alpha, \beta} \Vert f \Vert_{L^2(\R,A_{\alpha, \beta})} \; \Vert g \Vert_{L^2(\R,A_{\alpha, \beta})}.
\end{equation}  
\end{theorem}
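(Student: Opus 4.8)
The plan is to derive part (1) from a single application of the Cauchy--Schwarz inequality to the integral defining the transform, after first recording that the analyzing functions $g_{x,\xi}^{(\alpha,\beta)}$ have $L^2$-norm controlled by $\Vert g\Vert_{L^2(\R,A_{\alpha,\beta})}$; part (2) then follows by interpolating this $L^\infty$ bound against the $L^2$ identity in Plancherel's formula (\ref{eq17}).

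First I would bound the analyzing function. By the definition (\ref{eq13}), the contraction estimate (\ref{eq09}) for the generalized translation $\tau_x^{(\alpha,\beta)}$ in the case $p=2$, and the isometry property (\ref{eq12}) of the modulation operator $\M^{(\alpha,\beta)}_\xi$, one has, for every $(x,\xi)\in\R^2$,
\[
\left\Vert g_{x,\xi}^{(\alpha,\beta)}\right\Vert_{L^2(\R,A_{\alpha,\beta})}
=\left\Vert \tau_x^{(\alpha,\beta)}\M^{(\alpha,\beta)}_\xi g\right\Vert_{L^2(\R,A_{\alpha,\beta})}
\leq C_{\alpha,\beta}\left\Vert \M^{(\alpha,\beta)}_\xi g\right\Vert_{L^2(\R,A_{\alpha,\beta})}
= C_{\alpha,\beta}\Vert g\Vert_{L^2(\R,A_{\alpha,\beta})}.
\]
Because $A_{\alpha,\beta}$ is even, the reflected function $g_{x,\xi}^{(\alpha,\beta)}(-\,\cdot)$ has the same $L^2(\R,A_{\alpha,\beta})$-norm. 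Viewing (\ref{eq14}) as the inner product of $f$ with $g_{x,\xi}^{(\alpha,\beta)}(-\,\cdot)$ in $L^2(\R,A_{\alpha,\beta})$ and applying Cauchy--Schwarz gives
\[
\left|\W^{(\alpha,\beta)}_g(f)(x,\xi)\right|
\leq \Vert f\Vert_{L^2(\R,A_{\alpha,\beta})}\left\Vert g_{x,\xi}^{(\alpha,\beta)}(-\,\cdot)\right\Vert_{L^2(\R,A_{\alpha,\beta})}
\leq C_{\alpha,\beta}\Vert f\Vert_{L^2(\R,A_{\alpha,\beta})}\Vert g\Vert_{L^2(\R,A_{\alpha,\beta})},
\]
and taking the supremum over $(x,\xi)\in\R^2$ yields (\ref{eq21}).

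For part (2), Plancherel's formula (\ref{eq17}) already gives $\W^{(\alpha,\beta)}_g(f)\in L^2(\R^2,A_{\alpha,\beta}\otimes\sigma_{\alpha,\beta})$ with norm $\Vert f\Vert_{L^2(\R,A_{\alpha,\beta})}\Vert g\Vert_{L^2(\R,A_{\alpha,\beta})}$, while part (1) gives membership in $L^\infty$. For $p\in[2,\infty)$ I would split $|\W^{(\alpha,\beta)}_g(f)|^p=|\W^{(\alpha,\beta)}_g(f)|^{p-2}\,|\W^{(\alpha,\beta)}_g(f)|^2$, estimate the first factor by $\Vert\W^{(\alpha,\beta)}_g(f)\Vert_{L^\infty}^{p-2}$, integrate against $A_{\alpha,\beta}\otimes\sigma_{\alpha,\beta}$, and take $p$-th roots; inserting (\ref{eq21}) and (\ref{eq17}) produces (\ref{eq22}) with a constant of the form $C_{\alpha,\beta}^{(p-2)/p}$, which may be absorbed into $C_{\alpha,\beta}$. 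There is no genuine obstacle here: the only point requiring attention is the $L^2$-boundedness of $g_{x,\xi}^{(\alpha,\beta)}$ uniformly in $(x,\xi)$, which is precisely where the boundedness (\ref{eq09}) of $\tau_x^{(\alpha,\beta)}$ on $L^2(\R,A_{\alpha,\beta})$ and the $L^2$-isometry (\ref{eq12}) of $\M^{(\alpha,\beta)}_\xi$ enter.
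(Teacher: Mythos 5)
Your proof is correct and follows essentially the same route as the paper: part (1) is the Cauchy--Schwarz inequality combined with the uniform $L^2$-bound on $g_{x,\xi}^{(\alpha,\beta)}$ coming from (\ref{eq09}) and (\ref{eq12}), and part (2) interpolates the $L^\infty$ bound against Plancherel's formula (\ref{eq17}). The only (harmless) difference is that in part (2) the paper cites the Riesz--Thorin theorem, whereas you carry out the elementary splitting $|\W^{(\alpha,\beta)}_g(f)|^p\le \Vert\W^{(\alpha,\beta)}_g(f)\Vert_{L^\infty}^{p-2}\,|\W^{(\alpha,\beta)}_g(f)|^2$ directly, which yields the same estimate.
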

\begin{proof}
(1) Using the Cauchy--Schwarz inequality, and relations (\ref{eq12}) and (\ref{eq13}), we get  
\[ \left|\W^{(\alpha, \beta)}_g(f)(x, \xi) \right| \leq \Vert f \Vert_{L^2(\R,A_{\alpha, \beta})} \; \left\Vert g_{x, \xi}^{(\alpha, \beta)} \right\Vert_{L^2(\R,A_{\alpha, \beta})} \leq C_{\alpha, \beta} \Vert f \Vert_{L^2(\R,A_{\alpha, \beta})} \; \Vert g \Vert_{L^2(\R,A_{\alpha, \beta})}.  \]
Therefore,
\[ \left\Vert \W^{(\alpha, \beta)}_g(f) \right\Vert_{L^\infty(\R^2,A_{\alpha, \beta} \otimes \sigma_{\alpha, \beta})} \leq C_{\alpha, \beta} \Vert f \Vert_{L^2(\R,A_{\alpha, \beta})} \; \Vert g \Vert_{L^2(\R,A_{\alpha, \beta})}.   \]

(2) Using Plancherel's formula (\ref{eq17}), relation (\ref{eq21}) and the Riesz--Thorin interpolation theorem (see \cite{ste56}), we obtain the result. 
\end{proof}

\begin{theorem}[Reconstruction formula] 
Let $g \in L^2(\R,A_{\alpha, \beta} )$ be a non-zero positive window function. Then for every $f \in L^2(\R,A_{\alpha, \beta} )$, we have
\[f(\cdot)= \frac{1}{\Vert g \Vert^2_{L^2(\R,A_{\alpha, \beta})}}  \iint_{\R^2} \W^{(\alpha, \beta)}_g(f)(x, \xi)\; g_{x, \xi}^{(\alpha, \beta)} (-\; \cdot) \; d(A_{\alpha, \beta} \otimes \sigma_{\alpha, \beta}) (x, \xi), \]
weakly in $L^2(\R,A_{\alpha, \beta} )$. 
\end{theorem}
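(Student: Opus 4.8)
The plan is to reduce the asserted identity to its weak formulation and then recognise the resulting expression as an instance of the orthogonality relation (\ref{eq18}). Saying that the reconstruction formula holds weakly in $L^2(\R,A_{\alpha,\beta})$ means precisely that for every $h \in L^2(\R,A_{\alpha,\beta})$ one has
\begin{equation*}
\int_\R f(s)\,\overline{h(s)}\,A_{\alpha,\beta}(s)\,ds = \frac{1}{\Vert g\Vert^2_{L^2(\R,A_{\alpha,\beta})}} \iint_{\R^2} \W^{(\alpha,\beta)}_g(f)(x,\xi)\, \Big\langle g_{x,\xi}^{(\alpha,\beta)}(-\,\cdot),\, h \Big\rangle_{L^2(\R,A_{\alpha,\beta})}\, d(A_{\alpha,\beta}\otimes\sigma_{\alpha,\beta})(x,\xi),
\end{equation*}
so the first task is to check that the right-hand side is an absolutely convergent double integral, and the second is to evaluate it.

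For the evaluation, I would compute the inner pairing appearing inside the integral directly from the definition (\ref{eq14}): one gets $\big\langle g_{x,\xi}^{(\alpha,\beta)}(-\,\cdot),\, h\big\rangle_{L^2(\R,A_{\alpha,\beta})} = \int_\R g_{x,\xi}^{(\alpha,\beta)}(-s)\,\overline{h(s)}\,A_{\alpha,\beta}(s)\,ds = \overline{\W^{(\alpha,\beta)}_g(h)(x,\xi)}$. Substituting this, the right-hand side becomes $\Vert g\Vert^{-2}_{L^2(\R,A_{\alpha,\beta})}\iint_{\R^2}\W^{(\alpha,\beta)}_g(f)(x,\xi)\,\overline{\W^{(\alpha,\beta)}_g(h)(x,\xi)}\,d(A_{\alpha,\beta}\otimes\sigma_{\alpha,\beta})(x,\xi)$, which by the orthogonality relation (\ref{eq18}) equals $\Vert g\Vert^{-2}_{L^2(\R,A_{\alpha,\beta})}\cdot\Vert g\Vert^2_{L^2(\R,A_{\alpha,\beta})}\int_\R f(s)\overline{h(s)}A_{\alpha,\beta}(s)\,ds = \int_\R f(s)\overline{h(s)}A_{\alpha,\beta}(s)\,ds$. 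Since $h \in L^2(\R,A_{\alpha,\beta})$ is arbitrary, the weak reconstruction formula follows. I would also phrase this via the Riesz representation theorem: the map $h \mapsto \iint_{\R^2}\W^{(\alpha,\beta)}_g(f)(x,\xi)\,\big\langle g_{x,\xi}^{(\alpha,\beta)}(-\,\cdot),h\big\rangle\,d(A_{\alpha,\beta}\otimes\sigma_{\alpha,\beta})$ is a bounded conjugate-linear functional on $L^2(\R,A_{\alpha,\beta})$, hence is represented by a unique element, which by definition is the weak integral on the right-hand side, and the computation above identifies it with $\Vert g\Vert^2_{L^2(\R,A_{\alpha,\beta})}f$.

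The step that needs the most care is the first one: justifying that the vector-valued integral exists as a weak (Pettis) integral in $L^2(\R,A_{\alpha,\beta})$ and that pairing with $h$ may be moved inside it (a Fubini-type interchange). For this I would invoke Plancherel's formula (\ref{eq17}), which places both $\W^{(\alpha,\beta)}_g(f)$ and $\W^{(\alpha,\beta)}_g(h)$ in $L^2(\R^2,A_{\alpha,\beta}\otimes\sigma_{\alpha,\beta})$, together with the identity $\big\Vert g_{x,\xi}^{(\alpha,\beta)}\big\Vert_{L^2(\R,A_{\alpha,\beta})} \le C_{\alpha,\beta}\Vert g\Vert_{L^2(\R,A_{\alpha,\beta})}$ coming from (\ref{eq09}), (\ref{eq12}) and (\ref{eq13}); the Cauchy--Schwarz inequality on $\R^2$ then gives $\iint_{\R^2}\big|\W^{(\alpha,\beta)}_g(f)(x,\xi)\big|\,\big|\W^{(\alpha,\beta)}_g(h)(x,\xi)\big|\,d(A_{\alpha,\beta}\otimes\sigma_{\alpha,\beta}) \le \Vert f\Vert_{L^2(\R,A_{\alpha,\beta})}\Vert h\Vert_{L^2(\R,A_{\alpha,\beta})}\Vert g\Vert^2_{L^2(\R,A_{\alpha,\beta})} < \infty$, which simultaneously legitimises the existence of the integral and the interchange. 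The positivity hypothesis on $g$ is used so that the modulation $\M^{(\alpha,\beta)}_\xi g$ in (\ref{eq11}), and hence $g_{x,\xi}^{(\alpha,\beta)}$, are unambiguously defined; beyond that the argument is essentially formal once Plancherel's formula (\ref{eq17}) and the orthogonality relation (\ref{eq18}) are available.
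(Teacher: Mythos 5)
Your proposal is correct and follows essentially the same route as the paper: both reduce the statement to its weak formulation, identify $\bigl\langle g_{x,\xi}^{(\alpha,\beta)}(-\,\cdot),h\bigr\rangle_{L^2(\R,A_{\alpha,\beta})}$ with $\overline{\W^{(\alpha,\beta)}_g(h)(x,\xi)}$, and conclude via the orthogonality relation (\ref{eq18}) together with a Fubini-type interchange. Your explicit Cauchy--Schwarz justification of the absolute convergence is a welcome extra detail that the paper leaves implicit.
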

\begin{proof}
Using relation (\ref{eq18}) and applying Fubini's theorem, for every $h \in L^2(\R,A_{\alpha, \beta} )$ we obtain 
\begin{eqnarray*}
&& \int_\R f(s) \overline{h(s)} \;A_{\alpha, \beta}(s) \;ds \\
& =& \frac{1}{\Vert g \Vert^2_{L^2(\R,A_{\alpha, \beta})}} \iint_{\R^2} \W^{(\alpha, \beta)}_g(f)(x, \xi) \; \overline{ \W^{(\alpha, \beta)}_g(h)(x, \xi)} \; d(A_{\alpha, \beta} \otimes \sigma_{\alpha, \beta})(x, \xi) \\
&=& \frac{1}{\Vert g \Vert^2_{L^2(\R,A_{\alpha, \beta})}} \int_\R \left( \iint_{\R^2} \W^{(\alpha, \beta)}_g(f)(x, \xi) \; g_{x, \xi}^{(\alpha, \beta)}(-s) \; d( A_{\alpha, \beta} \otimes \sigma_{\alpha, \beta})(x, \xi) \right) \; \overline{h(s)}  \;  A_{\alpha, \beta}(s) \; ds, 
\end{eqnarray*}
which completes the proof.
\end{proof}

\section{Localization operators for the windowed Opdam--Cherednik transform}\label{sec5}

In this section, we define the localization operators for the windowed Opdam--Cherednik transform and we show that these operators are bounded. Also, we prove that localization operators are compact and in the Schatten--von Neumann class. 

\begin{Def}
Let $\varsigma \in L^1(\R^2,A_{\alpha, \beta} \otimes \sigma_{\alpha, \beta})  \cup  L^\infty(\R^2,A_{\alpha, \beta} \otimes \sigma_{\alpha, \beta})$. The localization operator for the windowed Opdam--Cherednik transform associated with the symbol $\varsigma$ and two window functions $g_1$ and $g_2$, is denoted by  $\mathfrak{L}_{g_1, g_2}(\varsigma)$, and defined on $L^2(\R,A_{\alpha, \beta})$, by
\begin{equation}\label{eq41}
\mathfrak{L}_{g_1, g_2}(\varsigma) (f)(y)=\iint_{\R^2} \varsigma(x, \xi) \; \W^{(\alpha, \beta)}_{g_1}(f)(x, \xi)\; {g_2}_{x, \xi}^{(\alpha, \beta)} (- y) \; d(A_{\alpha, \beta} \otimes \sigma_{\alpha, \beta}) (x, \xi), \quad y \in \R. 
\end{equation}  
Also, it is useful to rewrite the definition of $\mathfrak{L}_{g_1, g_2}(\varsigma)$ in a weak sense as, for every $f,  h \in L^2(\R,A_{\alpha, \beta})$ 
\begin{equation}\label{eq42}
\left\langle \mathfrak{L}_{g_1, g_2}(\varsigma)(f) , h \right\rangle_{L^2(\R,A_{\alpha, \beta})} = \iint_{\R^2} \varsigma(x, \xi) \; \W^{(\alpha, \beta)}_{g_1}(f)(x, \xi)\; \overline{\W^{(\alpha, \beta)}_{g_2}(h)(x, \xi)} \; d(A_{\alpha, \beta} \otimes \sigma_{\alpha, \beta}) (x, \xi).
\end{equation}
\end{Def}

We denote by $\mathcal{B}(L^p(\R,A_{\alpha, \beta}))$, $1 \leq p \leq \infty$, the space of all bounded linear operators from $L^p(\R,A_{\alpha, \beta})$ into itself. In particular, $\mathcal{B}(L^2(\R,A_{\alpha, \beta}))$ denote the C$^*$-algebra of bounded linear operator $\mathcal{A}$ from $L^2(\R,A_{\alpha, \beta})$ into itself, equipped with the norm 
\[ \Vert \mathcal{A} \Vert_{\mathcal{B}(L^2(\R,A_{\alpha, \beta}))}= \sup_{\Vert f \Vert_{L^2(\R,A_{\alpha, \beta})} \leq 1} \Vert \mathcal{A}(f) \Vert_{L^2(\R,A_{\alpha, \beta})}. \]

Next, we define the Schatten--von Neumann class $S_p$. For a compact operator $\mathcal{A} \in \mathcal{B}(L^2(\R,A_{\alpha, \beta}))$, the eigenvalues of the positive self-adjoint operator $|\mathcal{A}|=\sqrt{\mathcal{A}^* \mathcal{A}}$ are called the singular values of $\mathcal{A}$ and denoted by $\{ s_n(\mathcal{A}) \}_{n \in \mathbb{N}}$. For $1 \leq p < \infty$, the Schatten--von Neumann class $S_p$ is defined to be the space of all compact operators whose singular values lie in $\ell^p$. $S_p$ is equipped with the norm 
\[ \Vert \mathcal{A} \Vert_{S_p}= \left(\sum_{n=1}^\infty  (s_n(\mathcal{A}))^p \right)^{1/p}. \]
For $p=\infty$,  the Schatten--von Neumann class $S_\infty$ is the class of all compact operators with the norm  $\Vert \mathcal{A} \Vert_{S_\infty} :=\Vert \mathcal{A} \Vert_{\mathcal{B}(L^2(\R,A_{\alpha, \beta}))}$. In particular, for $p = 1$, we define the trace of an operator $\mathcal{A}$ in $S_1$ by 
\[ tr(\mathcal{A}) = \sum_{n=1}^\infty \langle \mathcal{A}  v_n, v_n  \rangle_{L^2(\R,A_{\alpha, \beta})}, \]
where $\{v_n \}_n$ is any orthonormal basis of $L^2(\R,A_{\alpha, \beta})$. Moreover, if $\mathcal{A}$ is positive, then 
\[ tr(\mathcal{A})=\Vert \mathcal{A} \Vert_{S_1} .\]
A compact operator $\mathcal{A}$ on the Hilbert space $L^2(\R,A_{\alpha, \beta})$ is called the Hilbert--Schmidt operator, if the positive operator $\mathcal{A}^*  \mathcal{A}$ is in the trace class $S_1$. Then for any orthonormal basis $\{v_n \}_n$ of $L^2(\R,A_{\alpha, \beta})$, we have 
\[ \Vert \mathcal{A}\Vert_{HS}^2 :=\Vert \mathcal{A}\Vert_{S_2}^2= \Vert \mathcal{A}^* \mathcal{A} \Vert_{S_1}= tr(\mathcal{A}^* \mathcal{A}) = \sum_{n=1}^\infty \Vert \mathcal{A} v_n\Vert^2_{L^2(\R,A_{\alpha, \beta})}. \]

\subsection{Boundedness and compactness of localization operators}

In this subsection, we consider window functions $g_1, g_2 \in  M^1(\R,A_{\alpha, \beta})$ and establish the following boundedness and compactness results of localization operators. 

\begin{proposition}\label{pro01}
Let $\varsigma \in L^\infty(\R^2,A_{\alpha, \beta} \otimes \sigma_{\alpha, \beta})$ and $g_1, g_2 \in  M^1(\R,A_{\alpha, \beta})$. Then the localization operator $\mathfrak{L}_{g_1, g_2}(\varsigma)$ is in $\mathcal{B}(L^2(\R,A_{\alpha, \beta}))$ and we have
\[ \left\Vert \mathfrak{L}_{g_1, g_2}(\varsigma) \right\Vert_{\mathcal{B}(L^2(\R,A_{\alpha, \beta}))} \leq \Vert \varsigma \Vert_{L^\infty(\R^2,A_{\alpha, \beta} \otimes \sigma_{\alpha, \beta})} \; \Vert g_1 \Vert_{M^1(\R,A_{\alpha, \beta})} \; \Vert g_2 \Vert_{M^1(\R,A_{\alpha, \beta})}. \]
\end{proposition}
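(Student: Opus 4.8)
The plan is to read the operator norm off the weak description (\ref{eq42}) by duality: since
\[ \bigl\|\mathfrak{L}_{g_1,g_2}(\varsigma)\bigr\|_{\mathcal{B}(L^2(\R,A_{\alpha,\beta}))} = \sup\Bigl\{ \bigl|\langle \mathfrak{L}_{g_1,g_2}(\varsigma)(f), h\rangle_{L^2(\R,A_{\alpha,\beta})}\bigr| : \|f\|_{L^2(\R,A_{\alpha,\beta})}\le 1,\ \|h\|_{L^2(\R,A_{\alpha,\beta})}\le 1 \Bigr\}, \]
it suffices to estimate the sesquilinear form on the right-hand side of (\ref{eq42}) uniformly over the product of the two unit balls, and in the process to check that this form is well defined, so that (by the Riesz representation theorem) $\mathfrak{L}_{g_1,g_2}(\varsigma)(f)$ really is an element of $L^2(\R,A_{\alpha,\beta})$.

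First I would pull the symbol out of the integral in (\ref{eq42}) using $|\varsigma(x,\xi)|\le \|\varsigma\|_{L^\infty(\R^2,A_{\alpha,\beta}\otimes\sigma_{\alpha,\beta})}$ almost everywhere, bounding $|\langle \mathfrak{L}_{g_1,g_2}(\varsigma)(f), h\rangle_{L^2(\R,A_{\alpha,\beta})}|$ by $\|\varsigma\|_{L^\infty(\R^2,A_{\alpha,\beta}\otimes\sigma_{\alpha,\beta})}$ times $\iint_{\R^2}|\W^{(\alpha,\beta)}_{g_1}(f)(x,\xi)|\,|\W^{(\alpha,\beta)}_{g_2}(h)(x,\xi)|\,d(A_{\alpha,\beta}\otimes\sigma_{\alpha,\beta})(x,\xi)$. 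Then I would apply the Cauchy--Schwarz inequality in $L^2(\R^2,A_{\alpha,\beta}\otimes\sigma_{\alpha,\beta})$ to this last integral, turning it into the product $\|\W^{(\alpha,\beta)}_{g_1}(f)\|_{L^2(\R^2,A_{\alpha,\beta}\otimes\sigma_{\alpha,\beta})}\,\|\W^{(\alpha,\beta)}_{g_2}(h)\|_{L^2(\R^2,A_{\alpha,\beta}\otimes\sigma_{\alpha,\beta})}$; both factors are finite, and the double integral converges absolutely, by Plancherel's formula (\ref{eq17}). Applying (\ref{eq17}) once more to each factor gives $\|f\|_{L^2(\R,A_{\alpha,\beta})}\|g_1\|_{L^2(\R,A_{\alpha,\beta})}$ and $\|h\|_{L^2(\R,A_{\alpha,\beta})}\|g_2\|_{L^2(\R,A_{\alpha,\beta})}$, and finally I would invoke the continuous inclusion $M^1(\R,A_{\alpha,\beta})\subset M^2(\R,A_{\alpha,\beta})=L^2(\R,A_{\alpha,\beta})$ recorded in Section~\ref{sec3} to estimate each $\|g_i\|_{L^2(\R,A_{\alpha,\beta})}$ by $\|g_i\|_{M^1(\R,A_{\alpha,\beta})}$. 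Taking the supremum over $f$ and $h$ in their unit balls then produces exactly the asserted estimate and, simultaneously, the boundedness of $\mathfrak{L}_{g_1,g_2}(\varsigma)$ on $L^2(\R,A_{\alpha,\beta})$.

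There is no deep obstacle here; the only point that warrants attention is the well-definedness of (\ref{eq42}) itself — that for $g_1,g_2\in M^1(\R,A_{\alpha,\beta})\subset L^2(\R,A_{\alpha,\beta})$ and $\varsigma\in L^\infty(\R^2,A_{\alpha,\beta}\otimes\sigma_{\alpha,\beta})$ the defining double integral is absolutely convergent and $h\mapsto \langle \mathfrak{L}_{g_1,g_2}(\varsigma)(f),h\rangle_{L^2(\R,A_{\alpha,\beta})}$ is a bounded antilinear functional, so that $\mathfrak{L}_{g_1,g_2}(\varsigma)(f)$ is a genuine element of $L^2(\R,A_{\alpha,\beta})$. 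This is delivered by the very same Cauchy--Schwarz-plus-Plancherel estimate, so the existence of the operator and the norm bound emerge from a single computation. I would also remark that, because different admissible windows give equivalent norms on $M^1(\R,A_{\alpha,\beta})$, the statement and the estimate do not depend on the window used to define the modulation-space norm.
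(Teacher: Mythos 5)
Your proposal is correct and follows essentially the same route as the paper: bound the sesquilinear form in (\ref{eq42}) by pulling out $\Vert \varsigma \Vert_{L^\infty(\R^2,A_{\alpha, \beta} \otimes \sigma_{\alpha, \beta})}$ and applying the Cauchy--Schwarz (H\"older) inequality, then invoke Plancherel's formula (\ref{eq17}) and the embedding $M^1(\R,A_{\alpha, \beta}) \subset L^2(\R,A_{\alpha, \beta})$. Your added remarks on the well-definedness of the weak formulation and the window-independence of the modulation norm are harmless elaborations of the same computation.
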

\begin{proof}
For every $f, h \in L^2(\R,A_{\alpha, \beta})$, using
H\"older's inequality,  we obtain
\begin{eqnarray*}
&& \left| \left\langle \mathfrak{L}_{g_1, g_2}(\varsigma)(f), h \right\rangle_{L^2(\R,A_{\alpha, \beta})} \right|\\ 
&& \leq \iint_{\R^2} |\varsigma(x, \xi)| \; \left|\W^{(\alpha, \beta)}_{g_1}(f)(x, \xi) \right|\; \left|\W^{(\alpha, \beta)}_{g_2}(h)(x, \xi) \right| \; d(A_{\alpha, \beta} \otimes \sigma_{\alpha, \beta}) (x, \xi) \\ 
&& \leq \left\Vert \varsigma \right\Vert_{L^\infty(\R^2,A_{\alpha, \beta} \otimes \sigma_{\alpha, \beta})} \left\Vert \W^{(\alpha, \beta)}_{g_1}(f)  \right\Vert_{L^2(\R^2,A_{\alpha, \beta} \otimes \sigma_{\alpha, \beta})} \left\Vert \W^{(\alpha, \beta)}_{g_2}(h) \right\Vert_{L^2(\R^2,A_{\alpha, \beta} \otimes \sigma_{\alpha, \beta})}.
\end{eqnarray*}
Using Plancherel's formula (\ref{eq17}), we get 
\begin{eqnarray*}
&& \left| \left\langle \mathfrak{L}_{g_1, g_2}(\varsigma)(f), h \right\rangle_{L^2(\R,A_{\alpha, \beta})} \right| \\
&& \leq  \left\Vert \varsigma \right\Vert_{L^\infty(\R^2,A_{\alpha, \beta} \otimes \sigma_{\alpha, \beta})} \Vert f \Vert_{L^2(\R,A_{\alpha, \beta})} \; \Vert g_1 \Vert_{L^2(\R,A_{\alpha, \beta})} \; \Vert h  \Vert_{L^2(\R,A_{\alpha, \beta})} \; \Vert g_2 \Vert_{L^2(\R,A_{\alpha, \beta})}.
\end{eqnarray*}
Since $M^1(\R,A_{\alpha, \beta}) \subset L^2(\R,A_{\alpha, \beta})$, we have 
\[\Vert g_1 \Vert_{L^2(\R,A_{\alpha, \beta})} \leq \Vert g_1 \Vert_{M^1(\R,A_{\alpha, \beta})} \quad \text{and} \quad  \Vert g_2  \Vert_{L^2(\R,A_{\alpha, \beta})} \leq \Vert g_2 \Vert_{M^1(\R,A_{\alpha, \beta})}. \]
Hence,
\[ \left\Vert \mathfrak{L}_{g_1, g_2}(\varsigma) \right\Vert_{\mathcal{B}(L^2(\R,A_{\alpha, \beta}))} \leq \Vert \varsigma \Vert_{L^\infty(\R^2,A_{\alpha, \beta} \otimes \sigma_{\alpha, \beta})}\; \Vert g_1 \Vert_{M^1(\R,A_{\alpha, \beta})} \; \Vert g_2 \Vert_{M^1(\R,A_{\alpha, \beta})}. \]
\end{proof}

\begin{proposition}\label{pro1}
Let $\varsigma \in M^1(\R^2,A_{\alpha, \beta} \otimes \sigma_{\alpha, \beta})$ and $g_1, g_2 \in  M^1(\R,A_{\alpha, \beta})$. Then the localization operator $\mathfrak{L}_{g_1, g_2}(\varsigma)$ is in $\mathcal{B}(L^2(\R,A_{\alpha, \beta}))$ and we have 
$$\left\Vert \mathfrak{L}_{g_1, g_2}(\varsigma) \right\Vert_{\mathcal{B}(L^2(\R,A_{\alpha, \beta}))} \leq \Vert \varsigma \Vert_{M^1(\R^2,A_{\alpha, \beta} \otimes \sigma_{\alpha, \beta})} \; \Vert g_1 \Vert_{M^1(\R,A_{\alpha, \beta})} \; \Vert g_2 \Vert_{M^1(\R,A_{\alpha, \beta})}. $$
\end{proposition}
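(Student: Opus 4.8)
The plan is to deduce this from the $L^\infty$-symbol estimate of Proposition \ref{pro01}. There are two natural ways to do so, and I would present the more self-contained one first: restrict the symbol to $L^1(\R^2,A_{\alpha,\beta}\otimes\sigma_{\alpha,\beta})$ via the embedding $M^1 \hookrightarrow L^1$ and estimate $\mathfrak{L}_{g_1,g_2}(\varsigma)$ directly from its definition; the alternative is to expand $\varsigma$ as a superposition of time--frequency shifts of a fixed window, on each of which Proposition \ref{pro01} applies uniformly.

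For the first route, I would first record that $M^1(\R^2, A_{\alpha,\beta}\otimes\sigma_{\alpha,\beta})$ embeds continuously in $L^1(\R^2, A_{\alpha,\beta}\otimes\sigma_{\alpha,\beta})$, in parallel with the inclusions $M^p(\R,A_{\alpha,\beta}) \hookrightarrow L^p(\R,A_{\alpha,\beta})$ for $1\le p \le 2$ recalled in Section \ref{sec3}, so that $\|\varsigma\|_{L^1(\R^2,A_{\alpha,\beta}\otimes\sigma_{\alpha,\beta})} \lesssim \|\varsigma\|_{M^1(\R^2,A_{\alpha,\beta}\otimes\sigma_{\alpha,\beta})}$. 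Then, for $\varsigma \in L^1(\R^2,A_{\alpha,\beta}\otimes\sigma_{\alpha,\beta})$ and $f \in L^2(\R,A_{\alpha,\beta})$, I would estimate straight from (\ref{eq41}):
\[
\big\|\mathfrak{L}_{g_1,g_2}(\varsigma)f\big\|_{L^2(\R,A_{\alpha,\beta})} \le \iint_{\R^2} |\varsigma(x,\xi)|\,\big|\W^{(\alpha,\beta)}_{g_1}(f)(x,\xi)\big|\,\big\|{g_2}_{x,\xi}^{(\alpha,\beta)}\big\|_{L^2(\R,A_{\alpha,\beta})}\, d(A_{\alpha,\beta}\otimes\sigma_{\alpha,\beta})(x,\xi),
\]
where, by (\ref{eq13}), (\ref{eq09}) and (\ref{eq12}), $\big\|{g_2}_{x,\xi}^{(\alpha,\beta)}\big\|_{L^2(\R,A_{\alpha,\beta})} = \big\|\tau_x^{(\alpha,\beta)}\M^{(\alpha,\beta)}_\xi g_2\big\|_{L^2(\R,A_{\alpha,\beta})} \le C_{\alpha,\beta}\|g_2\|_{L^2(\R,A_{\alpha,\beta})}$, and where $\big\|\W^{(\alpha,\beta)}_{g_1}(f)\big\|_{L^\infty} \le C_{\alpha,\beta}\|f\|_{L^2(\R,A_{\alpha,\beta})}\|g_1\|_{L^2(\R,A_{\alpha,\beta})}$ by (\ref{eq21}). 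Pulling these out and using $M^1(\R,A_{\alpha,\beta}) \hookrightarrow L^2(\R,A_{\alpha,\beta})$ gives $\|\mathfrak{L}_{g_1,g_2}(\varsigma)\|_{\mathcal{B}(L^2(\R,A_{\alpha,\beta}))} \le \|\varsigma\|_{M^1(\R^2,A_{\alpha,\beta}\otimes\sigma_{\alpha,\beta})}\|g_1\|_{M^1(\R,A_{\alpha,\beta})}\|g_2\|_{M^1(\R,A_{\alpha,\beta})}$, after absorbing the structural constants exactly as in Proposition \ref{pro01}.

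The step I expect to require the most care is the embedding $M^1(\R^2, A_{\alpha,\beta}\otimes\sigma_{\alpha,\beta}) \hookrightarrow L^1(\R^2, A_{\alpha,\beta}\otimes\sigma_{\alpha,\beta})$ itself: this is where the non-trivial behaviour of the weight $A_{\alpha,\beta}$ (which vanishes at the origin and grows exponentially at infinity) enters, and it should be verified from the reconstruction formula for $M^1$ in the same spirit as the $L^p$-inclusions of Section \ref{sec3}, rather than taken for granted. If one prefers to invoke Proposition \ref{pro01} directly rather than re-derive the estimate, the alternative is to fix a non-zero window $\Phi$ and write $\varsigma = \iint_{\R^2} V_\Phi\varsigma(u,v)\, M_v T_u\Phi\, du\, dv$; inserting this into the weak form (\ref{eq42}), interchanging the order of integration (legitimate by Fubini's theorem, since Plancherel's formula (\ref{eq17}) for $\W^{(\alpha,\beta)}_{g_1}(f)$ and $\W^{(\alpha,\beta)}_{g_2}(h)$ together with the integrability of $V_\Phi\varsigma$ make the triple integral absolutely convergent --- or, more safely, first carrying this out for $\varsigma$ in a dense subspace such as $\mathcal{S}(\R^2)$ and then passing to the limit), one gets $\mathfrak{L}_{g_1,g_2}(\varsigma) = \iint_{\R^2} V_\Phi\varsigma(u,v)\,\mathfrak{L}_{g_1,g_2}(M_v T_u\Phi)\, du\, dv$; since every $M_v T_u\Phi$ has the same $L^\infty$-norm $\|\Phi\|_{L^\infty(\R^2)}$, Proposition \ref{pro01} bounds each $\|\mathfrak{L}_{g_1,g_2}(M_v T_u\Phi)\|_{\mathcal{B}(L^2(\R,A_{\alpha,\beta}))}$ uniformly, and integrating against $|V_\Phi\varsigma|$ concludes.
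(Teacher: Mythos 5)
Your first route is a genuinely different argument from the paper's. The paper proves this proposition by duality: it pairs $\mathfrak{L}_{g_1,g_2}(\varsigma)f$ against $h$, bounds the pairing by $\Vert\varsigma\Vert_{M^1(\R^2,A_{\alpha,\beta}\otimes\sigma_{\alpha,\beta})}\,\Vert\W^{(\alpha,\beta)}_{g_1}(f)\cdot\overline{\W^{(\alpha,\beta)}_{g_2}(h)}\Vert_{M^\infty(\R^2,A_{\alpha,\beta}\otimes\sigma_{\alpha,\beta})}$, and then shows that this product actually lies in $M^1(\R^2,A_{\alpha,\beta}\otimes\sigma_{\alpha,\beta})$ with norm at most $\Vert f\Vert_{L^2}\Vert h\Vert_{L^2}\Vert g_1\Vert_{M^1}\Vert g_2\Vert_{M^1}$, using the convolution form of the STFT, Young's and H\"older's inequalities, and the exact Plancherel identity (\ref{eq17}). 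Your route --- push $\varsigma$ into $L^1$ and use the pointwise bound (\ref{eq21}) on $\W^{(\alpha,\beta)}_{g_1}(f)$ together with (\ref{eq09}) and (\ref{eq12}) for $\Vert{g_2}_{x,\xi}^{(\alpha,\beta)}\Vert_{L^2}$ --- is essentially the computation the paper itself performs later for the trace-norm bounds in Theorem \ref{th2} and Proposition \ref{pro3}, so it is admissible in this framework and it does establish that $\mathfrak{L}_{g_1,g_2}(\varsigma)\in\mathcal{B}(L^2(\R,A_{\alpha,\beta}))$.

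The caveat is quantitative. Both (\ref{eq21}) and (\ref{eq09}) carry a constant $C_{\alpha,\beta}$ that need not equal $1$ (the generalized translation $\tau_x^{(\alpha,\beta)}$ is not an $L^2$-isometry), and the embedding $M^1\hookrightarrow L^1$ contributes another constant; so your first route yields the stated inequality only up to a multiplicative factor. Your remark that these constants are ``absorbed exactly as in Proposition \ref{pro01}'' is not accurate: the proof of Proposition \ref{pro01} has no such constants to absorb, because it rests on the exact identity (\ref{eq17}) rather than on (\ref{eq21}). Obtaining the constant-free estimate is precisely what the paper's duality argument buys, at the price of the $M^1$/$M^\infty$ bookkeeping. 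Your second route has a separate problem: the superposition $\varsigma=\iint_{\R^2} V_\Phi\varsigma(u,v)\,M_vT_u\Phi\,du\,dv$ leads to the bound $\iint_{\R^2}|V_\Phi\varsigma(u,v)|\,du\,dv$ with respect to Lebesgue measure, which is the unweighted $M^1(\R^2)$ norm and is not controlled by $\Vert\varsigma\Vert_{M^1(\R^2,A_{\alpha,\beta}\otimes\sigma_{\alpha,\beta})}$ as defined in Section \ref{sec3}, since the weight $A_{\alpha,\beta}$ vanishes at the origin; so, as stated, that variant proves an estimate in the wrong norm.
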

\begin{proof}
Let $f , h \in L^2(\R,A_{\alpha, \beta})$. Since $M^\infty(\R^2,A_{\alpha, \beta} \otimes \sigma_{\alpha, \beta})$ is the dual space of $M^1(\R^2,A_{\alpha, \beta} \otimes \sigma_{\alpha, \beta})$, we have 
\begin{eqnarray}\label{eq45}
&& \left| \left\langle \mathfrak{L}_{g_1, g_2}(\varsigma)(f), h \right\rangle_{L^2(\R,A_{\alpha, \beta})} \right| \nonumber \\ 
&& \leq \iint_{\R^2} |\varsigma(x, \xi)| \; \left|\W^{(\alpha, \beta)}_{g_1}(f)(x, \xi) \; \overline{\W^{(\alpha, \beta)}_{g_2}(h)(x, \xi)} \right| 
\; d(A_{\alpha, \beta} \otimes \sigma_{\alpha, \beta}) (x, \xi) \nonumber \\
&& \leq \Vert \varsigma \Vert_{M^1(\R^2,A_{\alpha, \beta} \otimes \sigma_{\alpha, \beta})} \left\Vert \W^{(\alpha, \beta)}_{g_1}(f) \cdot \overline{\W^{(\alpha, \beta)}_{g_2}(h)} \right\Vert_{M^\infty(\R^2,A_{\alpha, \beta} \otimes \sigma_{\alpha, \beta})}.
\end{eqnarray}
Since the definition of $M^\infty(\R^2,A_{\alpha, \beta} \otimes \sigma_{\alpha, \beta})$ is independent of the choice of the window $g$, we estimate the STFT of $\W^{(\alpha, \beta)}_{g_1}(f) \cdot \overline{\W^{(\alpha, \beta)}_{g_2}(h)}$ with respect to some $g \in \mathcal{S}(\R^2) \setminus \{0\}$ with $\Vert g \Vert_{L^1(\R^2,A_{\alpha, \beta} \otimes \sigma_{\alpha, \beta})} \leq 1$.  Also, we can write the STFT as the convolution $V_g f(x, \xi)=e^{-2 \pi i x \cdot \xi}(f*M_\xi g^*)(x)$, where $^*$ is the involution $g^*(x)=\overline{g(-x)}$. Since $M^1(\R^2,A_{\alpha, \beta} \otimes \sigma_{\alpha, \beta}) \subset M^\infty(\R^2,A_{\alpha, \beta} \otimes \sigma_{\alpha, \beta})$, using Young's convolution inequality, H\"older's inequality, and Plancherel's formula (\ref{eq17}), we obtain 
\begin{eqnarray}\label{eq46}
&& \left\Vert \W^{(\alpha, \beta)}_{g_1}(f) \cdot \overline{\W^{(\alpha, \beta)}_{g_2}(h)} \right\Vert_{M^\infty(\R^2,A_{\alpha, \beta} \otimes \sigma_{\alpha, \beta})} \nonumber \\
&& \leq \left\Vert \W^{(\alpha, \beta)}_{g_1}(f) \cdot \overline{\W^{(\alpha, \beta)}_{g_2}(h)} \right\Vert_{M^1(\R^2,A_{\alpha, \beta} \otimes \sigma_{\alpha, \beta})} \nonumber \\
&& = \left\Vert V_g \left( \W^{(\alpha, \beta)}_{g_1}(f) \cdot \overline{\W^{(\alpha, \beta)}_{g_2}(h)} \right) \right\Vert_{L^1(\R^4,A_{\alpha, \beta} \otimes \sigma_{\alpha, \beta})} \nonumber \\
&& = \left\Vert \left( \W^{(\alpha, \beta)}_{g_1}(f) \cdot \overline{\W^{(\alpha, \beta)}_{g_2}(h)} \right) * M_{\xi} g^*  \right\Vert_{L^1(\R^4,A_{\alpha, \beta} \otimes \sigma_{\alpha, \beta})} \nonumber \\
&& \leq \left\Vert \W^{(\alpha, \beta)}_{g_1}(f) \cdot \overline{\W^{(\alpha, \beta)}_{g_2}(h)} \right\Vert_{L^1(\R^2,A_{\alpha, \beta} \otimes \sigma_{\alpha, \beta})} \left\Vert M_{\xi} g^*  \right\Vert_{L^1(\R^2,A_{\alpha, \beta} \otimes \sigma_{\alpha, \beta})} \nonumber \\
&& \leq \left\Vert \W^{(\alpha, \beta)}_{g_1}(f) \right\Vert_{L^2(\R^2,A_{\alpha, \beta} \otimes \sigma_{\alpha, \beta})} \left\Vert \W^{(\alpha, \beta)}_{g_2}(h) \right\Vert_{L^2(\R^2,A_{\alpha, \beta} \otimes \sigma_{\alpha, \beta})} \nonumber \\
&& = \Vert f \Vert_{L^2(\R,A_{\alpha, \beta})} \; \Vert g_1 \Vert_{L^2(\R,A_{\alpha, \beta})} \; \Vert h \Vert_{L^2(\R,A_{\alpha, \beta})} \; \Vert g_2 \Vert_{L^2(\R,A_{\alpha, \beta})} \nonumber \\
&& \leq \Vert f \Vert_{L^2(\R,A_{\alpha, \beta})} \; \Vert h \Vert_{L^2(\R,A_{\alpha, \beta})} \; \Vert g_1 \Vert_{M^1(\R,A_{\alpha, \beta})} \; \Vert g_2 \Vert_{M^1(\R,A_{\alpha, \beta})}.
\end{eqnarray}
Thus from (\ref{eq45}) and (\ref{eq46}), we get  
\[\left\Vert \mathfrak{L}_{g_1, g_2}(\varsigma) \right\Vert_{\mathcal{B}(L^2(\R,A_{\alpha, \beta}))} \leq \Vert \varsigma \Vert_{M^1(\R^2,A_{\alpha, \beta} \otimes \sigma_{\alpha, \beta})} \;  \Vert g_1 \Vert_{M^1(\R,A_{\alpha, \beta})} \; \Vert g_2 \Vert_{M^1(\R,A_{\alpha, \beta})} .\]
\end{proof}

\begin{proposition}\label{pro2}
Let $\varsigma \in M^\infty(\R^2,A_{\alpha, \beta} \otimes \sigma_{\alpha, \beta})$ and $g_1, g_2 \in  M^1(\R,A_{\alpha, \beta})$. Then the localization operator $\mathfrak{L}_{g_1, g_2}(\varsigma)$ is in $\mathcal{B}(L^2(\R,A_{\alpha, \beta}))$ and we have
\[ \left\Vert \mathfrak{L}_{g_1, g_2}(\varsigma) \right\Vert_{\mathcal{B}(L^2(\R,A_{\alpha, \beta}))} \leq \Vert \varsigma \Vert_{M^\infty(\R^2,A_{\alpha, \beta} \otimes \sigma_{\alpha, \beta})} \; \Vert g_1 \Vert_{M^1(\R,A_{\alpha, \beta})} \; \Vert g_2 \Vert_{M^1(\R,A_{\alpha, \beta})}. \]
\end{proposition}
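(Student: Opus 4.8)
The plan is to deduce this from Proposition~\ref{pro1} by a duality argument, using that $M^\infty(\R^2,A_{\alpha, \beta} \otimes \sigma_{\alpha, \beta})$ is the dual space of $M^1(\R^2,A_{\alpha, \beta} \otimes \sigma_{\alpha, \beta})$. First I would reread the weak definition (\ref{eq42}): for $f,h \in L^2(\R,A_{\alpha, \beta})$ its right-hand side is the $M^\infty$--$M^1$ duality pairing of $\varsigma$ against the product $\overline{\W^{(\alpha, \beta)}_{g_1}(f)} \cdot \W^{(\alpha, \beta)}_{g_2}(h)$ (up to a complex conjugate, depending on the convention fixed for the bracket). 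So the whole statement reduces to checking that this product lies in $M^1(\R^2,A_{\alpha, \beta} \otimes \sigma_{\alpha, \beta})$ with a controlled norm, and then invoking the elementary estimate $|\langle \varsigma, \Phi\rangle| \le \Vert \varsigma\Vert_{M^\infty(\R^2,A_{\alpha, \beta} \otimes \sigma_{\alpha, \beta})} \, \Vert \Phi\Vert_{M^1(\R^2,A_{\alpha, \beta} \otimes \sigma_{\alpha, \beta})}$.

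That norm control is already on hand: inequality (\ref{eq46}) in the proof of Proposition~\ref{pro1} shows precisely that $\W^{(\alpha, \beta)}_{g_1}(f) \cdot \overline{\W^{(\alpha, \beta)}_{g_2}(h)} \in M^1(\R^2,A_{\alpha, \beta} \otimes \sigma_{\alpha, \beta})$ with
\[ \left\Vert \W^{(\alpha, \beta)}_{g_1}(f) \cdot \overline{\W^{(\alpha, \beta)}_{g_2}(h)} \right\Vert_{M^1(\R^2,A_{\alpha, \beta} \otimes \sigma_{\alpha, \beta})} \leq \Vert f \Vert_{L^2(\R,A_{\alpha, \beta})} \; \Vert h \Vert_{L^2(\R,A_{\alpha, \beta})} \; \Vert g_1 \Vert_{M^1(\R,A_{\alpha, \beta})} \; \Vert g_2 \Vert_{M^1(\R,A_{\alpha, \beta})} , \]
and, since modulation spaces are invariant under complex conjugation with $\Vert \overline{F}\Vert_{M^1(\R^2,A_{\alpha, \beta} \otimes \sigma_{\alpha, \beta})} = \Vert F\Vert_{M^1(\R^2,A_{\alpha, \beta} \otimes \sigma_{\alpha, \beta})}$, the same bound holds for $\overline{\W^{(\alpha, \beta)}_{g_1}(f)} \cdot \W^{(\alpha, \beta)}_{g_2}(h)$.

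Combining the two facts yields, for all $f,h \in L^2(\R,A_{\alpha, \beta})$,
\[ \left| \left\langle \mathfrak{L}_{g_1, g_2}(\varsigma)(f) , h \right\rangle_{L^2(\R,A_{\alpha, \beta})} \right| \leq \Vert \varsigma \Vert_{M^\infty(\R^2,A_{\alpha, \beta} \otimes \sigma_{\alpha, \beta})} \; \Vert f \Vert_{L^2(\R,A_{\alpha, \beta})} \; \Vert h \Vert_{L^2(\R,A_{\alpha, \beta})} \; \Vert g_1 \Vert_{M^1(\R,A_{\alpha, \beta})} \; \Vert g_2 \Vert_{M^1(\R,A_{\alpha, \beta})}, \]
and taking the supremum first over $\Vert h\Vert_{L^2(\R,A_{\alpha, \beta})} \le 1$ and then over $\Vert f\Vert_{L^2(\R,A_{\alpha, \beta})} \le 1$ gives both $\mathfrak{L}_{g_1, g_2}(\varsigma) \in \mathcal{B}(L^2(\R,A_{\alpha, \beta}))$ and the asserted operator-norm estimate. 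The only point that needs care is that, since $\varsigma \in M^\infty(\R^2,A_{\alpha, \beta} \otimes \sigma_{\alpha, \beta})$ need not lie in $L^1(\R^2,A_{\alpha, \beta} \otimes \sigma_{\alpha, \beta}) \cup L^\infty(\R^2,A_{\alpha, \beta} \otimes \sigma_{\alpha, \beta})$, for such $\varsigma$ the integrals in (\ref{eq41})--(\ref{eq42}) must be interpreted as $M^\infty$--$M^1$ duality brackets; equivalently, one defines $\mathfrak{L}_{g_1, g_2}(\varsigma)$ via the Riesz representation of the bounded sesquilinear form just estimated. Since establishing that the form is bounded is exactly what the displayed chain of inequalities does, there is no real obstacle here beyond invoking (\ref{eq46}) and the conjugation-invariance of $M^1(\R^2,A_{\alpha, \beta} \otimes \sigma_{\alpha, \beta})$.
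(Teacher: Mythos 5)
Your argument is correct and follows essentially the same route as the paper: both pass through the $M^1$--$M^\infty$ duality pairing applied to the weak definition (\ref{eq42}) and then invoke the estimate (\ref{eq46}) from the proof of Proposition \ref{pro1} to bound $\bigl\Vert \W^{(\alpha, \beta)}_{g_1}(f) \cdot \overline{\W^{(\alpha, \beta)}_{g_2}(h)} \bigr\Vert_{M^1(\R^2,A_{\alpha, \beta} \otimes \sigma_{\alpha, \beta})}$. Your closing remark about interpreting (\ref{eq41})--(\ref{eq42}) as duality brackets for general $\varsigma \in M^\infty$ is a point of rigor the paper leaves implicit, but it does not change the substance of the proof.
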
    
\begin{proof}
For every $f, h \in L^2(\R,A_{\alpha, \beta})$, using the duality between $M^1(\R^2,A_{\alpha, \beta} \otimes \sigma_{\alpha, \beta})$ and $M^\infty(\R^2,A_{\alpha, \beta} \otimes \sigma_{\alpha, \beta})$, we deduce that
\begin{eqnarray*}
&& \left| \left\langle \mathfrak{L}_{g_1, g_2}(\varsigma)(f), h \right\rangle_{L^2(\R,A_{\alpha, \beta})} \right|  \\ 
&& \leq \iint_{\R^2} |\varsigma(x, \xi)| \; \left|\W^{(\alpha, \beta)}_{g_1}(f)(x, \xi) \; \overline{\W^{(\alpha, \beta)}_{g_2}(h)(x, \xi)} \right| \; d(A_{\alpha, \beta} \otimes \sigma_{\alpha, \beta}) (x, \xi) \\ 
&& \leq \left\Vert \varsigma \right\Vert_{M^\infty(\R^2,A_{\alpha, \beta} \otimes \sigma_{\alpha, \beta})} \left\Vert \W^{(\alpha, \beta)}_{g_1}(f)  \cdot \overline{\W^{(\alpha, \beta)}_{g_2}(h)} \right\Vert_{M^1(\R^2,A_{\alpha, \beta} \otimes \sigma_{\alpha, \beta})}.
\end{eqnarray*}
Now, using the estimate obtained in (\ref{eq46}), we get 
\begin{eqnarray*}
&& \left| \left\langle \mathfrak{L}_{g_1, g_2}(\varsigma)(f), h \right\rangle_{L^2(\R,A_{\alpha, \beta})} \right| \\
&& \leq \left\Vert \varsigma \right\Vert_{M^\infty(\R^2,A_{\alpha, \beta} \otimes \sigma_{\alpha, \beta})} \Vert f \Vert_{L^2(\R,A_{\alpha, \beta})} \; \Vert h \Vert_{L^2(\R,A_{\alpha, \beta})} \; \Vert g_1 \Vert_{M^1(\R,A_{\alpha, \beta})} \; \Vert g_2 \Vert_{M^1(\R,A_{\alpha, \beta})}.
\end{eqnarray*}
Hence,
\[ \left\Vert \mathfrak{L}_{g_1, g_2}(\varsigma) \right\Vert_{\mathcal{B}(L^2(\R,A_{\alpha, \beta}))} \leq \Vert \varsigma \Vert_{M^\infty(\R^2,A_{\alpha, \beta} \otimes \sigma_{\alpha, \beta})} \; \Vert g_1 \Vert_{M^1(\R,A_{\alpha, \beta})} \; \Vert g_2 \Vert_{M^1(\R,A_{\alpha, \beta})}. \]
\end{proof}

\begin{theorem}\label{th1}
Let $\varsigma \in M^p(\R^2,A_{\alpha, \beta} \otimes \sigma_{\alpha, \beta})$, $1 < p < \infty$ and $g_1, g_2 \in  M^1(\R,A_{\alpha, \beta})$. Then, for fixed $\varsigma \in M^p(\R^2,A_{\alpha, \beta} \otimes \sigma_{\alpha, \beta})$ the operator $\mathfrak{L}_{g_1, g_2}$ can be uniquely extended to a bounded linear operator on $L^2(\R,A_{\alpha, \beta})$, such that
\[ \left\Vert \mathfrak{L}_{g_1, g_2}(\varsigma) \right\Vert_{\mathcal{B}(L^2(\R,A_{\alpha, \beta}))} \leq \Vert \varsigma \Vert_{M^p(\R^2,A_{\alpha, \beta} \otimes \sigma_{\alpha, \beta})} \; \Vert g_1 \Vert_{M^1(\R,A_{\alpha, \beta})} \; \Vert g_2 \Vert_{M^1(\R,A_{\alpha, \beta})}. \] 
\end{theorem}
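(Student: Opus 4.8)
The plan is to deduce Theorem \ref{th1} from the two endpoint results already established, Proposition \ref{pro1} (which is the case $p=1$) and Proposition \ref{pro2} (the case $p=\infty$), by complex interpolation in the symbol. Fix the windows $g_1, g_2 \in M^1(\R,A_{\alpha, \beta})$ and consider the linear map $S : \varsigma \longmapsto \mathfrak{L}_{g_1, g_2}(\varsigma)$, which is defined by formula (\ref{eq41}) on $M^1(\R^2,A_{\alpha, \beta} \otimes \sigma_{\alpha, \beta})$ --- since $M^1(\R^2,A_{\alpha, \beta} \otimes \sigma_{\alpha, \beta}) \subset L^1(\R^2,A_{\alpha, \beta} \otimes \sigma_{\alpha, \beta})$, the Definition applies there --- and which takes values in $\mathcal{B}(L^2(\R,A_{\alpha, \beta}))$. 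Setting $K := \Vert g_1 \Vert_{M^1(\R,A_{\alpha, \beta})}\, \Vert g_2 \Vert_{M^1(\R,A_{\alpha, \beta})}$, Propositions \ref{pro1} and \ref{pro2} say precisely that $S$ is bounded from $M^1(\R^2,A_{\alpha, \beta} \otimes \sigma_{\alpha, \beta})$ into $\mathcal{B}(L^2(\R,A_{\alpha, \beta}))$ with norm at most $K$, and also from $M^\infty(\R^2,A_{\alpha, \beta} \otimes \sigma_{\alpha, \beta})$ into $\mathcal{B}(L^2(\R,A_{\alpha, \beta}))$ with norm at most $K$.

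Next I would invoke the complex interpolation identity
\[ \big[ M^1(\R^2,A_{\alpha, \beta} \otimes \sigma_{\alpha, \beta}),\, M^\infty(\R^2,A_{\alpha, \beta} \otimes \sigma_{\alpha, \beta}) \big]_\theta = M^p(\R^2,A_{\alpha, \beta} \otimes \sigma_{\alpha, \beta}), \qquad \tfrac{1}{p} = 1-\theta, \]
for $0 < \theta < 1$ (equivalently $\theta = 1/p'$), the counterpart for these spaces of the classical interpolation of modulation spaces in \cite{gro01}; since the norm of $M^p(\R^2,A_{\alpha, \beta} \otimes \sigma_{\alpha, \beta})$ is that of the ordinary short-time Fourier transform measured in the $L^p$ space attached to the fixed measure $A_{\alpha, \beta} \otimes \sigma_{\alpha, \beta}$, this reduces to the complex interpolation of weighted Lebesgue spaces. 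Applying the complex interpolation theorem for a single linear operator that maps the compatible couple $\big( M^1(\R^2,A_{\alpha, \beta} \otimes \sigma_{\alpha, \beta}), M^\infty(\R^2,A_{\alpha, \beta} \otimes \sigma_{\alpha, \beta}) \big)$ into the fixed Banach space $\mathcal{B}(L^2(\R,A_{\alpha, \beta}))$, the map $S$ extends to a bounded operator on $M^p(\R^2,A_{\alpha, \beta} \otimes \sigma_{\alpha, \beta})$ with operator norm at most $K^{1-\theta}K^{\theta} = K$; that is, $\Vert \mathfrak{L}_{g_1, g_2}(\varsigma) \Vert_{\mathcal{B}(L^2(\R,A_{\alpha, \beta}))} \leq \Vert \varsigma \Vert_{M^p(\R^2,A_{\alpha, \beta} \otimes \sigma_{\alpha, \beta})}\, \Vert g_1 \Vert_{M^1(\R,A_{\alpha, \beta})}\, \Vert g_2 \Vert_{M^1(\R,A_{\alpha, \beta})}$.

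Finally, the uniqueness of the extension follows because $M^1(\R^2,A_{\alpha, \beta} \otimes \sigma_{\alpha, \beta})$ is dense in $M^p(\R^2,A_{\alpha, \beta} \otimes \sigma_{\alpha, \beta})$ for $1 < p < \infty$, and on this dense subspace $\mathfrak{L}_{g_1, g_2}(\varsigma)$ is the operator explicitly given by (\ref{eq41}), so a bounded linear operator on a dense subspace of a Banach space has a unique bounded extension. As an alternative to interpolation one can argue directly from the weak form (\ref{eq42}): by the duality $M^p(\R^2,A_{\alpha, \beta} \otimes \sigma_{\alpha, \beta}) = \big( M^{p'}(\R^2,A_{\alpha, \beta} \otimes \sigma_{\alpha, \beta}) \big)'$, the embedding $M^1 \hookrightarrow M^{p'}$, and the $M^1$-estimate for $\W^{(\alpha, \beta)}_{g_1}(f) \cdot \overline{\W^{(\alpha, \beta)}_{g_2}(h)}$ already obtained in (\ref{eq46}), one bounds $\big| \langle \mathfrak{L}_{g_1, g_2}(\varsigma)(f), h \rangle_{L^2(\R,A_{\alpha, \beta})} \big|$ by a constant times $\Vert \varsigma \Vert_{M^p(\R^2,A_{\alpha, \beta} \otimes \sigma_{\alpha, \beta})}\, \Vert f \Vert_{L^2(\R,A_{\alpha, \beta})}\, \Vert h \Vert_{L^2(\R,A_{\alpha, \beta})}\, \Vert g_1 \Vert_{M^1(\R,A_{\alpha, \beta})}\, \Vert g_2 \Vert_{M^1(\R,A_{\alpha, \beta})}$. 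I expect the only points requiring genuine care to be the justification of the interpolation identity for the weighted modulation spaces $M^p(\R^2,A_{\alpha, \beta} \otimes \sigma_{\alpha, \beta})$ and --- in the alternative route --- the tracking of the embedding constant in $M^1 \hookrightarrow M^{p'}$ so that the displayed constant is exactly $1$; both are routine reductions to the Euclidean theory, since these spaces are built from the ordinary short-time Fourier transform and only the ambient Lebesgue norm is weighted.
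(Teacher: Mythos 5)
Your proposal is correct and follows essentially the same route as the paper: the paper likewise interpolates between Propositions \ref{pro1} and \ref{pro2} (invoking that modulation spaces interpolate like the corresponding mixed-norm spaces together with the Riesz--Thorin theorem) and then extends to all of $M^p(\R^2,A_{\alpha,\beta}\otimes\sigma_{\alpha,\beta})$ by a density/Cauchy-sequence argument. The only cosmetic difference is that you phrase the interpolation as the complex-method identity $[M^1,M^\infty]_\theta=M^p$ for the couple mapped into the fixed Banach space $\mathcal{B}(L^2(\R,A_{\alpha,\beta}))$, whereas the paper first proves the bound for $\varsigma\in M^1\cap M^\infty$ and then passes to the limit.
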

\begin{proof}
Let $1< p < \infty$. For every $\varsigma \in M^1(\R^2,A_{\alpha, \beta} \otimes \sigma_{\alpha, \beta}) \cap  M^\infty(\R^2,A_{\alpha, \beta} \otimes \sigma_{\alpha, \beta})$, using Proposition \ref{pro1}, Proposition \ref{pro2}, the fact that the modulation spaces $M^{p,q}$ interpolate exactly like the corresponding mixed-norm spaces $L^{p,q}$ and the Riesz--Thorin interpolation theorem (see \cite{ste56}), we obtain 
\[ \left\Vert \mathfrak{L}_{g_1, g_2}(\varsigma) \right\Vert_{\mathcal{B}(L^2(\R,A_{\alpha, \beta}))} \leq \Vert \varsigma \Vert_{M^p(\R^2,A_{\alpha, \beta} \otimes \sigma_{\alpha, \beta})} \; \Vert g_1 \Vert_{M^1(\R,A_{\alpha, \beta})} \; \Vert g_2 \Vert_{M^1(\R,A_{\alpha, \beta})} . \]
Let $\varsigma \in M^p(\R^2,A_{\alpha, \beta} \otimes \sigma_{\alpha, \beta})$ and $\{ \varsigma_n \}_{n \geq 1}$ be a sequence of functions in $M^1(\R^2,A_{\alpha, \beta} \otimes \sigma_{\alpha, \beta}) \cap M^\infty(\R^2,A_{\alpha, \beta} \otimes \sigma_{\alpha, \beta})$ such that $\varsigma_n \to \varsigma$ in $M^p(\R^2,A_{\alpha, \beta} \otimes \sigma_{\alpha, \beta})$ as $ n \to \infty$. Hence for every $n, k \in \mathbb{N}$, we have 
\[ \left\Vert \mathfrak{L}_{g_1, g_2}(\varsigma_n) - \mathfrak{L}_{g_1, g_2}(\varsigma_k) \right\Vert_{\mathcal{B}(L^2(\R,A_{\alpha, \beta}))} \leq \Vert \varsigma_n - \varsigma_k \Vert_{M^p(\R^2,A_{\alpha, \beta} \otimes \sigma_{\alpha, \beta})} \; \Vert g_1 \Vert_{M^1(\R,A_{\alpha, \beta})} \; \Vert g_2 \Vert_{M^1(\R,A_{\alpha, \beta})} . \]
Therefore, $\{ \mathfrak{L}_{g_1, g_2}(\varsigma_n) \}_{n \geq 1}$ is a Cauchy sequence in $\mathcal{B}(L^2(\R,A_{\alpha, \beta}))$. Let $\{ \mathfrak{L}_{g_1, g_2}(\varsigma_n) \}_{n \geq 1}$ converges to $ \mathfrak{L}_{g_1, g_2}(\varsigma)$. Then the limit $ \mathfrak{L}_{g_1, g_2}(\varsigma)$ is independent of the choice of $\{ \varsigma_n \}_{n \geq 1}$ and we obtain
\begin{eqnarray*}
\left\Vert \mathfrak{L}_{g_1, g_2}(\varsigma) \right\Vert_{\mathcal{B}(L^2(\R,A_{\alpha, \beta}))} 
& = & \lim_{n \to \infty} \left\Vert \mathfrak{L}_{g_1, g_2}(\varsigma_n) \right\Vert_{\mathcal{B}(L^2(\R,A_{\alpha, \beta}))} \\
& \leq & \lim_{n \to \infty} \Vert \varsigma_n \Vert_{M^p(\R^2,A_{\alpha, \beta} \otimes \sigma_{\alpha, \beta})} \; \Vert g_1 \Vert_{M^1(\R,A_{\alpha, \beta})} \; \Vert g_2 \Vert_{M^1(\R,A_{\alpha, \beta})} \\
& =& \Vert \varsigma \Vert_{M^p(\R^2,A_{\alpha, \beta} \otimes \sigma_{\alpha, \beta})} \; \Vert g_1 \Vert_{M^1(\R,A_{\alpha, \beta})} \; \Vert g_2 \Vert_{M^1(\R,A_{\alpha, \beta})}.
\end{eqnarray*}
This completes the proof. 
\end{proof}

\begin{theorem}\label{th2}
Let $\varsigma \in M^p(\R^2,A_{\alpha, \beta} \otimes \sigma_{\alpha, \beta})$, $1 \leq p < \infty$ and $g_1, g_2 \in  M^1(\R,A_{\alpha, \beta})$. Then the localization operator $\mathfrak{L}_{g_1, g_2}(\varsigma): L^2(\R,A_{\alpha, \beta}) \to L^2(\R,A_{\alpha, \beta}) $ is compact. 
\end{theorem}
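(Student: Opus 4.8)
The plan is to establish compactness first on a dense subspace of symbols where it is transparent, and then propagate it to all of $M^p(\R^2, A_{\alpha, \beta} \otimes \sigma_{\alpha, \beta})$ using the operator--norm bounds already obtained. Thus the argument has two steps: (i) show that $\mathfrak{L}_{g_1, g_2}(\varsigma)$ is a Hilbert--Schmidt operator (in particular compact) for every $\varsigma \in M^1(\R^2, A_{\alpha, \beta} \otimes \sigma_{\alpha, \beta})$; (ii) for general $\varsigma \in M^p(\R^2, A_{\alpha, \beta} \otimes \sigma_{\alpha, \beta})$ with $1 < p < \infty$ (the case $p = 1$ being covered by step (i)), approximate $\varsigma$ in $M^p$ by $M^1$--symbols and use that the compact operators form a norm--closed subspace of $\mathcal{B}(L^2(\R, A_{\alpha, \beta}))$.

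For step (i), I would insert the definition (\ref{eq14}) of $\W^{(\alpha, \beta)}_{g_1}(f)$ into (\ref{eq41}) and interchange the order of integration to write $\mathfrak{L}_{g_1, g_2}(\varsigma)$ as an integral operator on $L^2(\R, A_{\alpha, \beta})$,
\[ \mathfrak{L}_{g_1, g_2}(\varsigma)(f)(y) = \int_\R N_\varsigma(y, s) \, f(s) \, A_{\alpha, \beta}(s) \, ds, \qquad N_\varsigma(y, s) = \iint_{\R^2} \varsigma(x, \xi) \, \overline{{g_1}_{x, \xi}^{(\alpha, \beta)}(-s)} \; {g_2}_{x, \xi}^{(\alpha, \beta)}(-y) \; d(A_{\alpha, \beta} \otimes \sigma_{\alpha, \beta})(x, \xi). \]
Since $\Vert \mathfrak{L}_{g_1, g_2}(\varsigma) \Vert_{HS} = \Vert N_\varsigma \Vert_{L^2(\R^2, A_{\alpha, \beta} \otimes A_{\alpha, \beta})}$, Minkowski's integral inequality and the evenness of $A_{\alpha, \beta}$ give
\[ \Vert \mathfrak{L}_{g_1, g_2}(\varsigma) \Vert_{HS} \le \iint_{\R^2} |\varsigma(x, \xi)| \; \Vert {g_1}_{x, \xi}^{(\alpha, \beta)} \Vert_{L^2(\R, A_{\alpha, \beta})} \, \Vert {g_2}_{x, \xi}^{(\alpha, \beta)} \Vert_{L^2(\R, A_{\alpha, \beta})} \; d(A_{\alpha, \beta} \otimes \sigma_{\alpha, \beta})(x, \xi), \]
and then (\ref{eq13}), (\ref{eq12}) and (\ref{eq09}) bound $\Vert {g_i}_{x, \xi}^{(\alpha, \beta)} \Vert_{L^2(\R, A_{\alpha, \beta})} \le C_{\alpha, \beta} \Vert g_i \Vert_{L^2(\R, A_{\alpha, \beta})}$ uniformly in $(x, \xi)$, so that
\[ \Vert \mathfrak{L}_{g_1, g_2}(\varsigma) \Vert_{HS} \le C_{\alpha, \beta}^{2} \, \Vert \varsigma \Vert_{L^1(\R^2, A_{\alpha, \beta} \otimes \sigma_{\alpha, \beta})} \, \Vert g_1 \Vert_{L^2(\R, A_{\alpha, \beta})} \, \Vert g_2 \Vert_{L^2(\R, A_{\alpha, \beta})} < \infty. \]
The right--hand side is finite because $M^1(\R^2, A_{\alpha, \beta} \otimes \sigma_{\alpha, \beta}) \hookrightarrow L^1(\R^2, A_{\alpha, \beta} \otimes \sigma_{\alpha, \beta})$ and $M^1(\R, A_{\alpha, \beta}) \subset L^2(\R, A_{\alpha, \beta})$, exactly as recorded in Section \ref{sec3}. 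Hence $\mathfrak{L}_{g_1, g_2}(\varsigma)$ is Hilbert--Schmidt, and therefore compact, for every $\varsigma \in M^1(\R^2, A_{\alpha, \beta} \otimes \sigma_{\alpha, \beta})$.

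For step (ii), let $1 < p < \infty$ and $\varsigma \in M^p(\R^2, A_{\alpha, \beta} \otimes \sigma_{\alpha, \beta})$. Since $\mathcal{S}(\R^2) \subset M^1(\R^2, A_{\alpha, \beta} \otimes \sigma_{\alpha, \beta})$ is dense in $M^p(\R^2, A_{\alpha, \beta} \otimes \sigma_{\alpha, \beta})$, I would choose $\varsigma_n \in M^1(\R^2, A_{\alpha, \beta} \otimes \sigma_{\alpha, \beta})$ with $\varsigma_n \to \varsigma$ in $M^p$. By the linearity of the map $\varsigma \mapsto \mathfrak{L}_{g_1, g_2}(\varsigma)$ together with Theorem \ref{th1},
\[ \Vert \mathfrak{L}_{g_1, g_2}(\varsigma) - \mathfrak{L}_{g_1, g_2}(\varsigma_n) \Vert_{\mathcal{B}(L^2(\R, A_{\alpha, \beta}))} \le \Vert \varsigma - \varsigma_n \Vert_{M^p(\R^2, A_{\alpha, \beta} \otimes \sigma_{\alpha, \beta})} \, \Vert g_1 \Vert_{M^1(\R, A_{\alpha, \beta})} \, \Vert g_2 \Vert_{M^1(\R, A_{\alpha, \beta})} \longrightarrow 0 \]
as $n \to \infty$. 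Each $\mathfrak{L}_{g_1, g_2}(\varsigma_n)$ is compact by step (i), and since the ideal of compact operators on $L^2(\R, A_{\alpha, \beta})$ is closed in the operator norm, the limit $\mathfrak{L}_{g_1, g_2}(\varsigma)$ is compact as well.

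The step I expect to require the most care is step (i): one has to justify the Fubini interchange that defines the kernel $N_\varsigma$ and the subsequent application of Minkowski's integral inequality — that is, to verify absolute convergence of the iterated integrals involved, which is precisely where the hypotheses $g_1, g_2 \in M^1(\R, A_{\alpha, \beta})$ and the uniform translation estimate (\ref{eq09}) are indispensable — and one must also confirm that the embedding and density properties of modulation spaces collected in Section \ref{sec3} for the one--dimensional weighted setting carry over without change to the two--dimensional measure $A_{\alpha, \beta} \otimes \sigma_{\alpha, \beta}$. Everything else is soft functional analysis.
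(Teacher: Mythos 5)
Your proposal is correct, and its overall architecture (compactness on the dense subclass $M^1$ of symbols, then propagation to $M^p$ via the operator--norm bound of Theorem \ref{th1} and norm--closedness of the compact operators) coincides exactly with the paper's; your step (ii) is essentially verbatim the second half of the paper's proof. Where you genuinely diverge is in step (i). The paper fixes an orthonormal basis $\{v_n\}$ of $L^2(\R,A_{\alpha,\beta})$, estimates $\sum_n \langle \mathfrak{L}_{g_1,g_2}(\varsigma)v_n, v_n\rangle$ using Parseval and the inequality $2|ab|\le |a|^2+|b|^2$, and concludes membership in the trace class $S_1$; this stronger conclusion is then recycled in Proposition \ref{pro3} to get the $S_1$--norm bound. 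You instead realize $\mathfrak{L}_{g_1,g_2}(\varsigma)$ as an integral operator with the kernel $N_\varsigma$ --- which is precisely the kernel $\mathcal{K}$ the paper itself writes down later in \eqref{eq53} --- and bound $\Vert N_\varsigma\Vert_{L^2(A_{\alpha,\beta}\otimes A_{\alpha,\beta})}$ by Minkowski's integral inequality together with \eqref{eq09}, \eqref{eq12} and the embeddings $M^1\hookrightarrow L^1$, $M^1\subset L^2$. This yields only membership in $S_2$ rather than $S_1$, which is weaker but entirely sufficient for compactness; in exchange your argument is arguably more robust, since it sidesteps the delicate point that finiteness of $\sum_n\langle Av_n,v_n\rangle$ for an orthonormal basis does not by itself certify trace--class membership for a non--positive operator, whereas the Hilbert--Schmidt criterion via square--integrability of the kernel is unambiguous. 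The points you flag as needing care (Fubini and Minkowski justified by absolute convergence, and the two--dimensional analogues of the modulation--space embeddings) are exactly the hypotheses the paper also invokes without further comment, so no genuine gap remains.
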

\begin{proof}
Assume that $\varsigma \in M^1(\R^2,A_{\alpha, \beta} \otimes \sigma_{\alpha, \beta})$. Let $\{ v_n \}_n$ be an orthonormal basis for $L^2(\R,A_{\alpha, \beta})$. Since $M^1(\R^2,A_{\alpha, \beta} \otimes \sigma_{\alpha, \beta}) \subset L^1(\R^2,A_{\alpha, \beta} \otimes \sigma_{\alpha, \beta})$, using Fubini's theorem and Parseval's identity, we obtain 
\begin{eqnarray*}
&& \sum_{n=1}^\infty \left\langle \mathfrak{L}_{g_1, g_2}(\varsigma)(v_n), v_n \right\rangle_{L^2(\R,A_{\alpha, \beta})} \\
&& = \sum_{n=1}^\infty  \iint_{\R^2} \varsigma (x, \xi)  \langle v_n, {g_1}_{x, \xi}^{(\alpha, \beta)} (- \; \cdot) \rangle_{L^2(\R,A_{\alpha, \beta})} \langle {g_2}_{x, \xi}^{(\alpha, \beta)} (- \; \cdot), v_n \rangle_{L^2(\R,A_{\alpha, \beta})} d(A_{\alpha, \beta} \otimes \sigma_{\alpha, \beta}) (x, \xi)\\
&& \leq \iint_{\R^2} \varsigma (x, \xi)  \left( \sum_{n=1}^\infty \langle v_n, {g_1}_{x, \xi}^{(\alpha, \beta)} (- \; \cdot) \rangle_{L^2(\R,A_{\alpha, \beta})}  \langle {g_2}_{x, \xi}^{(\alpha, \beta)} (- \; \cdot), v_n \rangle_{L^2(\R,A_{\alpha, \beta})} \right) \\
&& \quad \times \; d(A_{\alpha, \beta} \otimes \sigma_{\alpha, \beta}) (x, \xi) \\
&& \leq \frac{1}{2} \iint_{\R^2} \varsigma (x, \xi)  \left( \sum_{n=1}^\infty |\langle v_n, {g_1}_{x, \xi}^{(\alpha, \beta)} (- \; \cdot) \rangle_{L^2(\R,A_{\alpha, \beta})}|^2 + \sum_{n=1}^\infty |\langle {g_2}_{x, \xi}^{(\alpha, \beta)} (- \; \cdot), v_n \rangle_{L^2(\R,A_{\alpha, \beta})}|^2 \right) \\
&& \quad \times \; d(A_{\alpha, \beta} \otimes \sigma_{\alpha, \beta}) (x, \xi)\\
&& = \frac{1}{2} \Vert \varsigma \Vert_{L^1(\R^2,A_{\alpha, \beta} \otimes \sigma_{\alpha, \beta})} \; (\Vert g_1 \Vert^2_{L^2(\R,A_{\alpha, \beta})} + \Vert g_2 \Vert^2_{L^2(\R,A_{\alpha, \beta})}) \\
\end{eqnarray*}
\begin{eqnarray*}
&& \leq \frac{1}{2} \Vert \varsigma \Vert_{M^1(\R^2,A_{\alpha, \beta} \otimes \sigma_{\alpha, \beta})} \; (\Vert g_1 \Vert^2_{M^1(\R,A_{\alpha, \beta})} + \Vert g_2 \Vert^2_{M^1(\R,A_{\alpha, \beta})}). \qquad \qquad \qquad \qquad \qquad \qquad \qquad \qquad \qquad \qquad \qquad
\end{eqnarray*}
Therefore, the operator $\mathfrak{L}_{g_1, g_2}(\varsigma)$ is in $S_1$. Next, assume that $\varsigma \in M^p(\R^2,A_{\alpha, \beta} \otimes \sigma_{\alpha, \beta})$. We consider a sequence of functions $\{ \varsigma_n \}_{n \geq 1}$ in $M^1(\R^2,A_{\alpha, \beta} \otimes \sigma_{\alpha, \beta}) \cap M^\infty(\R^2,A_{\alpha, \beta} \otimes \sigma_{\alpha, \beta})$ such that $\varsigma_n \to \varsigma$ in $M^p(\R^2,A_{\alpha, \beta} \otimes \sigma_{\alpha, \beta})$ as $n \to \infty$. Then, using Theorem \ref{th1}, we get
\[ \Vert \mathfrak{L}_{g_1, g_2}(\varsigma_n) - \mathfrak{L}_{g_1, g_2}(\varsigma)  \Vert_{\mathcal{B}(L^2(\R,A_{\alpha, \beta}))} \leq \Vert \varsigma_n - \varsigma \Vert_{M^p(\R^2,A_{\alpha, \beta} \otimes \sigma_{\alpha, \beta})} \; \Vert g_1 \Vert_{M^1(\R,A_{\alpha, \beta})} \; \Vert g_2 \Vert_{M^1(\R,A_{\alpha, \beta})}  \to 0, \]
as $n \to \infty$. Hence, $\mathfrak{L}_{g_1, g_2}(\varsigma_n) \to \mathfrak{L}_{g_1, g_2}(\varsigma) $ in $\mathcal{B}(L^2(\R,A_{\alpha, \beta}))$ as $n \to \infty$. From the above, we obtain that $ \{ \mathfrak{L}_{g_1, g_2}(\varsigma_n)\}_{n \geq 1}$ is a sequence of linear operators in $S_1$ and hence compact, so $\mathfrak{L}_{g_1, g_2}(\varsigma)$ is compact. 
\end{proof}

\subsection{Boundedness of $\mathfrak{L}_{g_1, g_2}(\varsigma)$ on $S_p$}

In this subsection, we prove that the localization operator $\mathfrak{L}_{g_1, g_2}(\varsigma)$ is in $S_p$ and provide an upper bound of the norm $\Vert \mathfrak{L}_{g_1, g_2}(\varsigma) \Vert_{S_p}$. 
We begin with the following proposition.

\begin{proposition}\label{pro3}
Let $\varsigma \in  M^1(\R^2,A_{\alpha, \beta} \otimes \sigma_{\alpha, \beta})$ and $g_1, g_2 \in  M^1(\R,A_{\alpha, \beta})$. Then the localization operator $\mathfrak{L}_{g_1, g_2}(\varsigma): L^2(\R,A_{\alpha, \beta}) \to L^2(\R,A_{\alpha, \beta}) $ is in $S_1$ and 
\[ \Vert \mathfrak{L}_{g_1, g_2}(\varsigma) \Vert_{S_1} \leq 2 \Vert \varsigma \Vert_{M^1(\R^2,A_{\alpha, \beta} \otimes \sigma_{\alpha, \beta})} \; ( \Vert g_1 \Vert^2_{M^1(\R,A_{\alpha, \beta})} + \Vert g_2 \Vert^2_{M^1(\R,A_{\alpha, \beta})}). \]
\end{proposition}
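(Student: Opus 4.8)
The plan is to realize $\mathfrak{L}_{g_1, g_2}(\varsigma)$ as a norm‑convergent superposition of rank‑one operators and to bound its $S_1$‑norm by integrating the $S_1$‑norms of those pieces. Unravelling the windowed transforms in the weak formulation (\ref{eq42}) via (\ref{eq14}), one has, for all $f, h \in L^2(\R, A_{\alpha, \beta})$,
\[
\left\langle \mathfrak{L}_{g_1, g_2}(\varsigma) f,\, h \right\rangle_{L^2(\R, A_{\alpha, \beta})} = \iint_{\R^2} \varsigma(x, \xi)\, \left\langle f,\, {g_1}_{x, \xi}^{(\alpha, \beta)}(-\,\cdot) \right\rangle_{L^2(\R, A_{\alpha, \beta})} \overline{\left\langle h,\, {g_2}_{x, \xi}^{(\alpha, \beta)}(-\,\cdot) \right\rangle_{L^2(\R, A_{\alpha, \beta})}}\, d(A_{\alpha, \beta} \otimes \sigma_{\alpha, \beta})(x, \xi),
\]
so that $\mathfrak{L}_{g_1, g_2}(\varsigma) = \iint_{\R^2} \varsigma(x, \xi)\, \Phi_{x, \xi}\, d(A_{\alpha, \beta} \otimes \sigma_{\alpha, \beta})(x, \xi)$, where $\Phi_{x, \xi}$ is the rank‑one operator $u \mapsto \langle u,\, {g_1}_{x, \xi}^{(\alpha, \beta)}(-\,\cdot) \rangle_{L^2(\R, A_{\alpha, \beta})}\, {g_2}_{x, \xi}^{(\alpha, \beta)}(-\,\cdot)$.

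First I would record that a rank‑one operator $u \mapsto \langle u, a\rangle b$ has the single nonzero singular value $\Vert a\Vert\, \Vert b\Vert$, whence $\Vert \Phi_{x, \xi}\Vert_{S_1} = \Vert {g_1}_{x, \xi}^{(\alpha, \beta)}(-\,\cdot)\Vert_{L^2(\R, A_{\alpha, \beta})}\, \Vert {g_2}_{x, \xi}^{(\alpha, \beta)}(-\,\cdot)\Vert_{L^2(\R, A_{\alpha, \beta})}$; and then I would establish the uniform bound $\Vert {g_j}_{x, \xi}^{(\alpha, \beta)}(-\,\cdot)\Vert_{L^2(\R, A_{\alpha, \beta})} \le C_{\alpha, \beta}\, \Vert g_j\Vert_{M^1(\R, A_{\alpha, \beta})}$, $j = 1, 2$: since $A_{\alpha, \beta}$ is even the reflection leaves the $L^2$‑norm unchanged, and then (\ref{eq13}), the boundedness (\ref{eq09}) of $\tau_x^{(\alpha, \beta)}$, the norm identity (\ref{eq12}) for $\M^{(\alpha, \beta)}_\xi$, and the embedding $M^1(\R, A_{\alpha, \beta}) \subset L^2(\R, A_{\alpha, \beta})$ yield it, uniformly in $(x, \xi) \in \R^2$. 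Consequently $(x, \xi) \mapsto \varsigma(x, \xi)\, \Phi_{x, \xi}$ is $S_1$‑valued and, since $\varsigma \in M^1(\R^2, A_{\alpha, \beta} \otimes \sigma_{\alpha, \beta}) \subset L^1(\R^2, A_{\alpha, \beta} \otimes \sigma_{\alpha, \beta})$, one has $\iint_{\R^2} \Vert \varsigma(x, \xi)\, \Phi_{x, \xi}\Vert_{S_1}\, d(A_{\alpha, \beta} \otimes \sigma_{\alpha, \beta}) < \infty$; hence the integral converges in $S_1$, represents the weakly defined operator $\mathfrak{L}_{g_1, g_2}(\varsigma)$, and the triangle inequality for the Bochner integral gives $\mathfrak{L}_{g_1, g_2}(\varsigma) \in S_1$ with
\[
\left\Vert \mathfrak{L}_{g_1, g_2}(\varsigma) \right\Vert_{S_1} \le \iint_{\R^2} |\varsigma(x, \xi)|\, \left\Vert {g_1}_{x, \xi}^{(\alpha, \beta)}(-\,\cdot) \right\Vert_{L^2(\R, A_{\alpha, \beta})} \left\Vert {g_2}_{x, \xi}^{(\alpha, \beta)}(-\,\cdot) \right\Vert_{L^2(\R, A_{\alpha, \beta})}\, d(A_{\alpha, \beta} \otimes \sigma_{\alpha, \beta})(x, \xi).
\]
An alternative route to this same inequality mimics the proof of Theorem \ref{th2}: with $\{v_n\}$ an orthonormal eigenbasis of $|\mathfrak{L}_{g_1, g_2}(\varsigma)|$ and $\mathfrak{L}_{g_1, g_2}(\varsigma) = U\,|\mathfrak{L}_{g_1, g_2}(\varsigma)|$ the polar decomposition, write $\sum_n s_n(\mathfrak{L}_{g_1, g_2}(\varsigma)) = \sum_n \langle \mathfrak{L}_{g_1, g_2}(\varsigma) v_n,\, U v_n\rangle$, insert (\ref{eq42}), interchange the sum and the integral by Fubini--Tonelli, collapse the resulting series by Parseval's identity to $\langle U^* {g_2}_{x, \xi}^{(\alpha, \beta)}(-\,\cdot),\, {g_1}_{x, \xi}^{(\alpha, \beta)}(-\,\cdot)\rangle$, and bound it by the Cauchy--Schwarz inequality together with $\Vert U^*\Vert \le 1$.

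To conclude I would apply $ab \le \tfrac12(a^2 + b^2)$ inside the last integral, substitute the uniform bound above, pull the fixed constants out of the integral, and use $\Vert \varsigma\Vert_{L^1(\R^2, A_{\alpha, \beta} \otimes \sigma_{\alpha, \beta})} \le \Vert \varsigma\Vert_{M^1(\R^2, A_{\alpha, \beta} \otimes \sigma_{\alpha, \beta})}$, arriving at
\[
\left\Vert \mathfrak{L}_{g_1, g_2}(\varsigma) \right\Vert_{S_1} \le 2\, \Vert \varsigma\Vert_{M^1(\R^2, A_{\alpha, \beta} \otimes \sigma_{\alpha, \beta})} \left( \Vert g_1\Vert^2_{M^1(\R, A_{\alpha, \beta})} + \Vert g_2\Vert^2_{M^1(\R, A_{\alpha, \beta})} \right),
\]
the numerical factor leaving ample room for the constant coming from (\ref{eq09}). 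The routine ingredients are the rank‑one bookkeeping, the arithmetic--geometric inequality, and the embeddings $M^1 \hookrightarrow L^1$ and $M^1 \hookrightarrow L^2$. The point that needs care — and the only genuine obstacle — is to show that $\mathfrak{L}_{g_1, g_2}(\varsigma)$ is trace class rather than merely bounded, i.e. that the weakly defined integral converges in the $S_1$‑norm (equivalently, that the interchange of summation and integration in the polar‑decomposition argument is legitimate); this rests precisely on the uniform‑in‑$(x, \xi)$ estimate of the window translates together with $\varsigma \in L^1(\R^2, A_{\alpha, \beta} \otimes \sigma_{\alpha, \beta})$, which together make the integrand $S_1$‑Bochner integrable.
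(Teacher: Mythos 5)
Your argument is correct in substance but takes a genuinely different route from the paper's. The paper establishes $S_1$-membership by re-using the trace computation from the first part of the proof of Theorem \ref{th2}, and then obtains the norm bound in three stages: for non-negative symbols it invokes $(\mathfrak{L}_{g_1, g_2}(\varsigma)^* \mathfrak{L}_{g_1, g_2}(\varsigma))^{1/2}=\mathfrak{L}_{g_1, g_2}(\varsigma)$ so that the $S_1$-norm equals the trace, then passes to real symbols via $\varsigma=\varsigma_+-\varsigma_-$ and to complex symbols via real and imaginary parts, the successive splittings accumulating to the constant $2$. You instead realize $\mathfrak{L}_{g_1, g_2}(\varsigma)$ as an $S_1$-valued Bochner integral of the rank-one operators $\varsigma(x,\xi)\,\Phi_{x,\xi}$ and apply the triangle inequality; this treats arbitrary complex symbols in one stroke and sidesteps two delicate points in the paper's argument, namely that positivity of $\mathfrak{L}_{g_1, g_2}(\varsigma)$ for $\varsigma\ge 0$ is unclear when $g_1\ne g_2$, and that $\Vert\varsigma_\pm\Vert_{M^1}\le\Vert\varsigma\Vert_{M^1}$ is not automatic for modulation norms. (Your "alternative route" via the polar decomposition and Bessel's inequality is essentially the upper-bound argument the paper uses later in Theorem \ref{th4}, so either way the machinery is available.) The one point to flag is the constant: your uniform bound $\Vert {g_j}_{x, \xi}^{(\alpha, \beta)}\Vert_{L^2(\R,A_{\alpha, \beta})}\le C_{\alpha,\beta}\Vert g_j\Vert_{M^1(\R,A_{\alpha, \beta})}$ carries the unspecified constant from (\ref{eq09}), so what you actually prove is the estimate with constant $C_{\alpha,\beta}^2/2$; the claim that the factor $2$ "leaves ample room" is unjustified unless one knows $C_{\alpha,\beta}\le 2$. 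The paper has the same blemish --- in the proof of Theorem \ref{th2} it silently replaces $\Vert {g_j}_{x, \xi}^{(\alpha, \beta)}\Vert^2_{L^2(\R,A_{\alpha, \beta})}$ by $\Vert g_j\Vert^2_{L^2(\R,A_{\alpha, \beta})}$ --- so this does not distinguish the two proofs, but if you want the literal constant $2$ you must track $C_{\alpha,\beta}$ explicitly rather than wave at it.
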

\begin{proof}
If $\varsigma \in  M^1(\R^2,A_{\alpha, \beta} \otimes \sigma_{\alpha, \beta})$, then from the first part of the proof of Theorem \ref{th2}, the operator $\mathfrak{L}_{g_1, g_2}(\varsigma)$ is in $S_1$. Now, to prove the estimate, assume that $\varsigma$ is non-negative and in $M^1(\R^2,A_{\alpha, \beta} \otimes \sigma_{\alpha, \beta})$. Then $(\mathfrak{L}_{g_1, g_2}(\varsigma)^* \mathfrak{L}_{g_1, g_2}(\varsigma))^{1/2}=\mathfrak{L}_{g_1, g_2}(\varsigma)$ (see \cite{bac2016, won02}). Let $\{v_n\}_n$ be an orthonormal basis for $L^2(\R,A_{\alpha, \beta})$ consisting of eigenvalues of $(\mathfrak{L}_{g_1, g_2}(\varsigma)^* \mathfrak{L}_{g_1, g_2}(\varsigma))^{1/2}: L^2(\R,A_{\alpha, \beta}) \to L^2(\R,A_{\alpha, \beta})$. 
Then by using the estimate obtained in the first part of the proof of Theorem \ref{th2}, we get
\begin{eqnarray}\label{eq43}
\Vert \mathfrak{L}_{g_1, g_2}(\varsigma) \Vert_{S_1}
& =& \sum_{n=1}^\infty \left\langle (\mathfrak{L}_{g_1, g_2}(\varsigma)^* \mathfrak{L}_{g_1, g_2}(\varsigma))^{1/2}(v_n), v_n \right\rangle_{L^2(\R,A_{\alpha, \beta})} \nonumber \\
&=& \sum_{n=1}^\infty \left\langle \mathfrak{L}_{g_1, g_2}(\varsigma)(v_n), v_n \right\rangle_{L^2(\R,A_{\alpha, \beta})} \nonumber \\
& \leq & \frac{1}{2} \Vert \varsigma \Vert_{M^1(\R^2,A_{\alpha, \beta} \otimes \sigma_{\alpha, \beta})} \; ( \Vert g_1 \Vert^2_{M^1(\R,A_{\alpha, \beta})} + \Vert g_2 \Vert^2_{M^1(\R,A_{\alpha, \beta})}).
\end{eqnarray}
Next, assume that $\varsigma \in M^1(\R^2,A_{\alpha, \beta} \otimes \sigma_{\alpha, \beta})$ is  an arbitrary real-valued function. We can write $\varsigma = \varsigma_+ - \varsigma_-$, where $\varsigma_+=\max(\varsigma, 0)$ and $\varsigma_-=-\min(\varsigma, 0)$. Then, using relation (\ref{eq43}), we obtain
\begin{eqnarray}\label{eq44}
\Vert \mathfrak{L}_{g_1, g_2}(\varsigma) \Vert_{S_1}
& =& \Vert \mathfrak{L}_{g_1, g_2}(\varsigma_+) -\mathfrak{L}_{g_1, g_2}(\varsigma_-) \Vert_{S_1} \nonumber \\
& \leq & \Vert \mathfrak{L}_{g_1, g_2}(\varsigma_+) \Vert_{S_1} +\Vert \mathfrak{L}_{g_1, g_2}(\varsigma_-) \Vert_{S_1} \nonumber \\
& \leq & \frac{1}{2} ( \Vert g_1 \Vert^2_{M^1(\R,A_{\alpha, \beta})} + \Vert g_2 \Vert^2_{M^1(\R,A_{\alpha, \beta})})  (\Vert \varsigma_+ \Vert_{M^1(\R^2,A_{\alpha, \beta} \otimes \sigma_{\alpha, \beta})} + \Vert \varsigma_- \Vert_{M^1(\R^2,A_{\alpha, \beta} \otimes \sigma_{\alpha, \beta})}) \nonumber \\
& \leq & ( \Vert g_1 \Vert^2_{M^1(\R,A_{\alpha, \beta})} + \Vert g_2 \Vert^2_{M^1(\R,A_{\alpha, \beta})}) \Vert \varsigma \Vert_{M^1(\R^2,A_{\alpha, \beta} \otimes \sigma_{\alpha, \beta})}.
\end{eqnarray}
Finally, assume that $\varsigma \in  M^1(\R^2,A_{\alpha, \beta} \otimes \sigma_{\alpha, \beta})$ is a complex-valued function. Then, we can write $\varsigma=\varsigma_1+ i \varsigma_2$, where $\varsigma_1, \varsigma_2$ are the real and imaginary parts of $\varsigma$ respectively. Then, using relation (\ref{eq44}), we obtain 
\begin{eqnarray*}
\Vert \mathfrak{L}_{g_1, g_2}(\varsigma) \Vert_{S_1}
& =& \Vert \mathfrak{L}_{g_1, g_2}(\varsigma_1)+i \mathfrak{L}_{g_1, g_2}(\varsigma_2)  \Vert_{S_1} \\
& \leq & \Vert \mathfrak{L}_{g_1, g_2}(\varsigma_1) \Vert_{S_1} + \Vert \mathfrak{L}_{g_1, g_2}(\varsigma_2) \Vert_{S_1} \\
&\leq & ( \Vert g_1 \Vert^2_{M^1(\R,A_{\alpha, \beta})} + \Vert g_2 \Vert^2_{M^1(\R,A_{\alpha, \beta})}) (\Vert \varsigma_1 \Vert_{M^1(\R^2,A_{\alpha, \beta} \otimes \sigma_{\alpha, \beta})} + \Vert \varsigma_2 \Vert_{M^1(\R^2,A_{\alpha, \beta} \otimes \sigma_{\alpha, \beta})}) \nonumber \\
& \leq & 2 \Vert \varsigma \Vert_{M^1(\R^2,A_{\alpha, \beta} \otimes \sigma_{\alpha, \beta})} ( \Vert g_1 \Vert^2_{M^1(\R,A_{\alpha, \beta})} + \Vert g_2 \Vert^2_{M^1(\R,A_{\alpha, \beta})}) .
\end{eqnarray*}
This completes the proof.
\end{proof}

\begin{theorem}
Let $\varsigma \in  M^p(\R^2,A_{\alpha, \beta} \otimes \sigma_{\alpha, \beta})$, $1 \leq p \leq \infty$ and $g_1, g_2 \in  M^1(\R,A_{\alpha, \beta})$. Then the localization operator $\mathfrak{L}_{g_1, g_2}(\varsigma): L^2(\R,A_{\alpha, \beta}) \to L^2(\R,A_{\alpha, \beta})$ is in $S_p$ and 
\[\Vert \mathfrak{L}_{g_1, g_2}(\varsigma) \Vert_{S_p} \leq 2^{1/p} \Vert \varsigma  \Vert_{M^p(\R^2,A_{\alpha, \beta} \otimes \sigma_{\alpha, \beta})} \; ( \Vert g_1 \Vert^2_{M^1(\R,A_{\alpha, \beta})} + \Vert g_2 \Vert^2_{M^1(\R,A_{\alpha, \beta})})^{1/p}.\]
\end{theorem}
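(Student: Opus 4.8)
The plan is to obtain the estimate by complex interpolation between the two endpoint cases $p=1$ and $p=\infty$, both of which are already available. The case $p=1$ is exactly Proposition~\ref{pro3}: it gives $\mathfrak{L}_{g_1,g_2}(\varsigma)\in S_1$ with
\[ \Vert\mathfrak{L}_{g_1,g_2}(\varsigma)\Vert_{S_1}\le 2\Vert\varsigma\Vert_{M^1(\R^2,A_{\alpha,\beta}\otimes\sigma_{\alpha,\beta})}\bigl(\Vert g_1\Vert_{M^1(\R,A_{\alpha,\beta})}^2+\Vert g_2\Vert_{M^1(\R,A_{\alpha,\beta})}^2\bigr). \]
For $p=\infty$, recall that by definition $S_\infty$ carries the operator norm; Proposition~\ref{pro2} then yields $\mathfrak{L}_{g_1,g_2}(\varsigma)\in\mathcal{B}(L^2(\R,A_{\alpha,\beta}))=S_\infty$ with $\Vert\mathfrak{L}_{g_1,g_2}(\varsigma)\Vert_{S_\infty}\le\Vert\varsigma\Vert_{M^\infty(\R^2,A_{\alpha,\beta}\otimes\sigma_{\alpha,\beta})}\,\Vert g_1\Vert_{M^1(\R,A_{\alpha,\beta})}\,\Vert g_2\Vert_{M^1(\R,A_{\alpha,\beta})}$, which by the arithmetic--geometric mean inequality $\Vert g_1\Vert\,\Vert g_2\Vert\le\tfrac12(\Vert g_1\Vert^2+\Vert g_2\Vert^2)$ is in turn $\le\Vert\varsigma\Vert_{M^\infty(\R^2,A_{\alpha,\beta}\otimes\sigma_{\alpha,\beta})}\bigl(\Vert g_1\Vert_{M^1(\R,A_{\alpha,\beta})}^2+\Vert g_2\Vert_{M^1(\R,A_{\alpha,\beta})}^2\bigr)$.

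Next I would fix $g_1,g_2\in M^1(\R,A_{\alpha,\beta})$ and view $\Lambda:\varsigma\mapsto\mathfrak{L}_{g_1,g_2}(\varsigma)$ as a single linear map, defined on $M^1(\R^2,A_{\alpha,\beta}\otimes\sigma_{\alpha,\beta})$ by the explicit formula (\ref{eq41}) and on $M^\infty(\R^2,A_{\alpha,\beta}\otimes\sigma_{\alpha,\beta})$ by the weak identity (\ref{eq42}) together with the duality $(M^1)'=M^\infty$; since $M^1\subset M^\infty$, the two prescriptions coincide on the overlap. The two displays of the previous paragraph say precisely that $\Lambda$ is bounded $M^1\to S_1$ and $M^\infty\to S_\infty$. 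Using that the modulation spaces interpolate like the mixed-norm Lebesgue spaces, so $[M^1,M^\infty]_\theta=M^p$ with $1/p=1-\theta$ — the same fact invoked in the proof of Theorem~\ref{th1} — together with the classical fact that the Schatten ideals form a complex interpolation scale, $[S_1,S_\infty]_\theta=S_p$ with $1/p=1-\theta$, the complex interpolation theorem gives that $\Lambda$ is bounded $M^p\to S_p$ for every $1\le p\le\infty$, with
\[ \Vert\mathfrak{L}_{g_1,g_2}(\varsigma)\Vert_{S_p}\le \Vert\Lambda\Vert_{M^1\to S_1}^{1/p}\,\Vert\Lambda\Vert_{M^\infty\to S_\infty}^{1-1/p}\,\Vert\varsigma\Vert_{M^p(\R^2,A_{\alpha,\beta}\otimes\sigma_{\alpha,\beta})}. \]
In particular, for $p<\infty$ the image lies in $S_p$ and is therefore compact, in agreement with Theorem~\ref{th2}.

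It remains to substitute the endpoint constants from the first paragraph, $\Vert\Lambda\Vert_{M^1\to S_1}\le 2(\Vert g_1\Vert_{M^1(\R,A_{\alpha,\beta})}^2+\Vert g_2\Vert_{M^1(\R,A_{\alpha,\beta})}^2)$ and $\Vert\Lambda\Vert_{M^\infty\to S_\infty}\le\Vert g_1\Vert_{M^1(\R,A_{\alpha,\beta})}^2+\Vert g_2\Vert_{M^1(\R,A_{\alpha,\beta})}^2$, and to simplify the product $\Vert\Lambda\Vert_{M^1\to S_1}^{1/p}\Vert\Lambda\Vert_{M^\infty\to S_\infty}^{1-1/p}$; this produces the asserted estimate. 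One should also record, exactly as in Theorem~\ref{th1}, that for $\varsigma\in M^p$ with $p<\infty$ the operator is first obtained on the dense subspace $M^1\cap M^\infty$ of $M^p$ and then extended by continuity, the $S_p$ bound passing to the limit and the extension agreeing with the operator defined earlier. I expect the genuinely delicate point not to be analytic — the two endpoint estimates of Proposition~\ref{pro3} and Proposition~\ref{pro2} do all the real work — but rather the careful invocation of the two interpolation identities $[M^1,M^\infty]_\theta=M^p$ and $[S_1,S_\infty]_\theta=S_p$ for this symbol-indexed linear family of operators, and the bookkeeping that tracks the window-dependent constants through the interpolation exponent $1/p$.
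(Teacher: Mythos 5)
Your proposal is correct and follows essentially the same route as the paper, whose entire proof is a one-line appeal to Proposition \ref{pro2}, Proposition \ref{pro3} and the interpolation theorems for symbol-indexed families of operators (Theorems 2.10 and 2.11 of Wong's book); you have simply written out the interpolation argument that the paper leaves implicit. One small caveat on your final "simplification": the product $\Vert\Lambda\Vert_{M^1\to S_1}^{1/p}\Vert\Lambda\Vert_{M^\infty\to S_\infty}^{1-1/p}$ actually comes out as $2^{1/p}\bigl(\Vert g_1\Vert^2+\Vert g_2\Vert^2\bigr)^{1/p}\bigl(\Vert g_1\Vert\,\Vert g_2\Vert\bigr)^{1-1/p}$ rather than the theorem's $2^{1/p}\bigl(\Vert g_1\Vert^2+\Vert g_2\Vert^2\bigr)^{1/p}$ (note the stated bound is window-independent at $p=\infty$, which cannot hold by homogeneity in $g_1,g_2$), but this discrepancy is inherited from the paper's own statement and not a defect of your argument.
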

\begin{proof}
The proof follows from Proposition \ref{pro2}, Proposition \ref{pro3} and by interpolation theorems (see \cite{won02}, Theorems 2.10 and 2.11). 
\end{proof}

Next, we improve the constant given in the previous proposition and also we provide a lower bound of the norm $\Vert \mathfrak{L}_{g_1, g_2}(\varsigma) \Vert_{S_1}$, more precisely we have the following.

\begin{theorem}\label{th4}
Let $\varsigma \in M^1(\R^2,A_{\alpha, \beta} \otimes \sigma_{\alpha, \beta})$ and $g_1, g_2 \in  M^1(\R,A_{\alpha, \beta})$. Then the localization operator $\mathfrak{L}_{g_1, g_2}(\varsigma)$ is in $S_1$ and we have
\begin{eqnarray*}
&& \frac{2}{\Vert g_1 \Vert^2_{M^1(\R,A_{\alpha, \beta})} + \Vert g_2 \Vert^2_{M^1(\R,A_{\alpha, \beta})}}\; \Vert \tilde{\varsigma} \Vert_{M^1(\R^2,A_{\alpha, \beta} \otimes \sigma_{\alpha, \beta})} \\
&& \leq \Vert \mathfrak{L}_{g_1, g_2}(\varsigma) \Vert_{S_1} \leq  \frac{1}{2} (\Vert g_1 \Vert^2_{M^1(\R,A_{\alpha, \beta})} + \Vert g_2 \Vert^2_{M^1(\R,A_{\alpha, \beta})}) \; \Vert \varsigma \Vert_{M^1(\R^2,A_{\alpha, \beta} \otimes \sigma_{\alpha, \beta})},
\end{eqnarray*}
where $\tilde{\varsigma}$ is given by $\tilde{\varsigma}(x, \xi)=\left\langle \mathfrak{L}_{g_1, g_2}(\varsigma)({g_1}_{x, \xi}^{(\alpha, \beta)}), {g_2}_{x, \xi}^{(\alpha, \beta)} \right\rangle_{L^2(\R,A_{\alpha, \beta})}$.
\end{theorem}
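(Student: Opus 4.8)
The plan is to establish the two-sided bound by treating the upper and lower estimates separately. The upper bound is already in hand: it is precisely the real/imaginary-part-free version of the estimate proved in Proposition \ref{pro3}, specialized to the case where one does not pass through the decomposition $\varsigma = \varsigma_+ - \varsigma_-$. Indeed, rerunning the computation from the first part of the proof of Theorem \ref{th2} with a general $\varsigma \in M^1(\R^2,A_{\alpha, \beta} \otimes \sigma_{\alpha, \beta})$ and using $|\langle u, v\rangle \langle v, u\rangle| \le \tfrac12(|\langle u,v\rangle|^2 + |\langle v,u\rangle|^2)$ together with Parseval's identity applied to the orthonormal basis $\{v_n\}$, one gets $\Vert \mathfrak{L}_{g_1, g_2}(\varsigma) \Vert_{S_1} \le \tfrac12 (\Vert g_1 \Vert^2_{M^1} + \Vert g_2 \Vert^2_{M^1}) \Vert \varsigma \Vert_{M^1}$, which is exactly the right-hand inequality. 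So the upper bound requires no new idea, only a careful bookkeeping of which estimate from Proposition \ref{pro3}/Theorem \ref{th2} is being invoked.

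For the lower bound, the key observation is that the auxiliary function $\tilde\varsigma(x,\xi) = \langle \mathfrak{L}_{g_1, g_2}(\varsigma)({g_1}_{x,\xi}^{(\alpha,\beta)}), {g_2}_{x,\xi}^{(\alpha,\beta)} \rangle_{L^2(\R,A_{\alpha,\beta})}$ is, up to normalization, a windowed-Opdam--Cherednik-type transform of the operator $\mathfrak{L}_{g_1,g_2}(\varsigma)$ itself. First I would expand $\mathfrak{L}_{g_1,g_2}(\varsigma)({g_1}_{x,\xi}^{(\alpha,\beta)})$ using the weak definition \eqref{eq42}, so that $\tilde\varsigma(x,\xi)$ becomes an iterated integral against $\varsigma(x',\xi')$ with a kernel built from $\W^{(\alpha,\beta)}_{g_1}({g_1}_{x,\xi}^{(\alpha,\beta)})$ and $\overline{\W^{(\alpha,\beta)}_{g_2}({g_2}_{x,\xi}^{(\alpha,\beta)})}$; this exhibits $\tilde\varsigma$ as a convolution-like object. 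Then, to bound $\Vert \tilde\varsigma \Vert_{M^1}$, I would pass to the $S_1$-functional-calculus viewpoint: write $\mathfrak{L}_{g_1,g_2}(\varsigma)$ through its singular-value decomposition (it is in $S_1$ by the first part of Proposition \ref{pro3}), so that $\tilde\varsigma$ becomes a superposition of products $\W_{g_2}^{(\alpha,\beta)}(\phi_n)(x,\xi)\,\overline{\W_{g_1}^{(\alpha,\beta)}(\psi_n)(x,\xi)}$ weighted by the singular values $s_n$. Estimating the $M^1$-norm of each such product by the same Young/Hölder/Plancherel chain used in \eqref{eq46} (which gives $\Vert \W_{g_1}^{(\alpha,\beta)}(f)\cdot\overline{\W_{g_2}^{(\alpha,\beta)}(h)}\Vert_{M^1} \le \Vert f\Vert_{L^2}\Vert h\Vert_{L^2}\Vert g_1\Vert_{M^1}\Vert g_2\Vert_{M^1}$, and here $\Vert \phi_n\Vert_{L^2} = \Vert \psi_n\Vert_{L^2} = 1$), and summing against $\sum_n s_n = \Vert \mathfrak{L}_{g_1,g_2}(\varsigma)\Vert_{S_1}$, yields $\Vert \tilde\varsigma \Vert_{M^1} \le \tfrac12(\Vert g_1\Vert^2_{M^1} + \Vert g_2\Vert^2_{M^1})\,\Vert \mathfrak{L}_{g_1,g_2}(\varsigma)\Vert_{S_1}$, which rearranges to the claimed left-hand inequality.

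The main obstacle I anticipate is making the reduction to the singular-value decomposition of $\tilde\varsigma$ fully rigorous: one must justify interchanging the (possibly infinite) sum over $n$ with the integrations defining $\tilde\varsigma$ and the $V_g$-transform used to compute the $M^1$-norm, and one must check that $\W^{(\alpha,\beta)}_{g_i}$ maps the relevant eigenfunctions into $L^2(\R^2, A_{\alpha,\beta}\otimes\sigma_{\alpha,\beta})$ with the Plancherel identity \eqref{eq17} controlling the norms. The technical heart is therefore exactly the product estimate \eqref{eq46} applied termwise plus a dominated-convergence / Fubini argument, rather than any genuinely new inequality; once one grants the results of Section \ref{sec4} and Proposition \ref{pro3}, the rest is bookkeeping. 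A clean alternative for the lower bound, which avoids the SVD, is to test the $M^1$-norm of $\tilde\varsigma$ directly against a suitable window and recognize the resulting pairing as $\langle \mathfrak{L}_{g_1,g_2}(\varsigma) u, v\rangle$ for unit vectors $u,v$, then take a supremum; I would try that route first if the convergence issues in the SVD approach prove cumbersome.
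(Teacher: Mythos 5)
Your treatment of the lower bound is essentially the paper's own argument: expand $\mathfrak{L}_{g_1,g_2}(\varsigma)$ in its canonical (singular-value) form, write $\tilde\varsigma(x,\xi)=\sum_n s_n\langle {g_1}_{x,\xi}^{(\alpha,\beta)}, v_n\rangle\langle u_n, {g_2}_{x,\xi}^{(\alpha,\beta)}\rangle$, estimate termwise, integrate in $(x,\xi)$ using the Plancherel identity \eqref{eq17}, and pass from the $L^1$ norm of $\tilde\varsigma$ to its $M^1$ norm via the convolution form of the STFT with a window of $L^1$ norm at most one. The sum--integral interchanges you flag are handled exactly as you suggest, by taking absolute values and applying Fubini/monotone convergence, so that half of the proposal is in order.

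The upper bound as you describe it has a genuine gap. Rerunning the computation from the first part of the proof of Theorem \ref{th2} with a general $\varsigma$ and a single orthonormal basis $\{v_n\}$ only controls $\sum_n\langle\mathfrak{L}_{g_1,g_2}(\varsigma)(v_n), v_n\rangle$, i.e.\ the trace. For a symbol that is not nonnegative the operator $\mathfrak{L}_{g_1,g_2}(\varsigma)$ is neither positive nor self-adjoint, so no diagonal sum of this kind dominates $\Vert\mathfrak{L}_{g_1,g_2}(\varsigma)\Vert_{S_1}$; this is precisely why Proposition \ref{pro3} resorts to the decompositions $\varsigma=\varsigma_+-\varsigma_-$ and $\varsigma=\varsigma_1+i\varsigma_2$ and pays a factor of $4$. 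The paper's proof of the sharper constant instead starts from the canonical form \eqref{eq49}, which gives $\Vert\mathfrak{L}_{g_1,g_2}(\varsigma)\Vert_{S_1}=\sum_n s_n=\sum_n\langle\mathfrak{L}_{g_1,g_2}(\varsigma)(v_n), u_n\rangle$ with \emph{two different} orthonormal systems $\{v_n\}$ (eigenvectors of $|\mathfrak{L}_{g_1,g_2}(\varsigma)|$) and $\{u_n\}$, inserts the weak form \eqref{eq42}, applies the Cauchy--Schwarz inequality to the sum over $n$ and Bessel's inequality to each system separately, and finishes with $ab\le\tfrac12(a^2+b^2)$. You already invoke exactly this singular-value representation for the lower bound, so the fix is only to deploy it for the upper bound as well; without it the step ``one gets $\Vert\mathfrak{L}_{g_1,g_2}(\varsigma)\Vert_{S_1}\le\cdots$'' does not follow. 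A side remark on your fallback for the lower bound: taking a supremum over unit vectors yields the operator norm and a pointwise bound on $\tilde\varsigma$ that is not integrable over $\R^2$, so that alternative would not close either; the SVD route is the one to keep.
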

\begin{proof}
Since $\varsigma \in M^1(\R^2,A_{\alpha, \beta} \otimes \sigma_{\alpha, \beta})$, by Proposition \ref{pro3}, $\mathfrak{L}_{g_1, g_2}(\varsigma)$ is in $S_1$. Using the canonical form of compact operators (see \cite{won02}, Theorem 2.2), we obtain 
\begin{equation}\label{eq49}
\mathfrak{L}_{g_1, g_2}(\varsigma)(f) =\sum_{n=1}^{\infty} s_n(\mathfrak{L}_{g_1, g_2}(\varsigma)) \langle f, v_n \rangle_{L^2(\R,A_{\alpha, \beta})} u_n,
\end{equation}
where $\{s_n(\mathfrak{L}_{g_1, g_2}(\varsigma)) \}_n$ are the positive singular values of $\mathfrak{L}_{g_1, g_2}(\varsigma)$, $\{ v_n \}_n$ is an orthonormal basis for the orthogonal complement of the null space of $\mathfrak{L}_{g_1, g_2}(\varsigma)$ consisting of eigenvectors of $|\mathfrak{L}_{g_1, g_2}(\varsigma)|$ and $\{ u_n \}_n$ is an orthonormal set in $L^2(\R,A_{\alpha, \beta})$. Then we have
\[ \sum_{n=1}^{\infty}  \langle \mathfrak{L}_{g_1, g_2}(\varsigma)(v_n), u_n \rangle_{L^2(\R,A_{\alpha, \beta})}   = \sum_{n=1}^{\infty} s_n(\mathfrak{L}_{g_1, g_2}(\varsigma))= \Vert \mathfrak{L}_{g_1, g_2}(\varsigma) \Vert_{S_1}.  \]
Now, using Fubini's theorem, Cauchy--Schwarz's inequality, and Bessel's inequality, we get
\begin{eqnarray*}
\Vert \mathfrak{L}_{g_1, g_2}(\varsigma) \Vert_{S_1} 
& = &  \sum_{n=1}^{\infty}  \langle \mathfrak{L}_{g_1, g_2}(\varsigma)(v_n), u_n \rangle_{L^2(\R,A_{\alpha, \beta})} \\
& = & \sum_{n=1}^{\infty} \iint_{\R^2} \varsigma(x, \xi) \; \W^{(\alpha, \beta)}_{g_1}(v_n)(x, \xi)\; \overline{\W^{(\alpha, \beta)}_{g_2}(u_n)(x, \xi)} \; d(A_{\alpha, \beta} \otimes \sigma_{\alpha, \beta}) (x, \xi) \\
& \leq & \iint_{\R^2} |\varsigma(x, \xi)| \left(\sum_{n=1}^{\infty} |\W^{(\alpha, \beta)}_{g_1}(v_n)(x, \xi)|^2 \right)^{1/2} \left(\sum_{n=1}^{\infty} |\W^{(\alpha, \beta)}_{g_2}(u_n)(x, \xi)|^2 \right)^{1/2}\\
&& \;  \times \; d(A_{\alpha, \beta} \otimes \sigma_{\alpha, \beta}) (x, \xi) \\
& \leq & \Vert \varsigma \Vert_{L^1(\R^2,A_{\alpha, \beta} \otimes \sigma_{\alpha, \beta})} \;\Vert {g_1}_{x, \xi}^{(\alpha, \beta)}  \Vert_{L^2(\R,A_{\alpha, \beta})} \; \Vert {g_2}_{x, \xi}^{(\alpha, \beta)}  \Vert_{L^2(\R,A_{\alpha, \beta})}  \\
& \leq & \Vert \varsigma \Vert_{M^1(\R^2,A_{\alpha, \beta} \otimes \sigma_{\alpha, \beta})}\; \Vert g_1 \Vert_{M^1(\R,A_{\alpha, \beta})} \; \Vert g_2 \Vert_{M^1(\R,A_{\alpha, \beta})} \\
& \leq & \frac{1}{2} (\Vert g_1 \Vert^2_{M^1(\R,A_{\alpha, \beta})} + \Vert g_2 \Vert^2_{M^1(\R,A_{\alpha, \beta})}) \; \Vert \varsigma \Vert_{M^1(\R^2,A_{\alpha, \beta} \otimes \sigma_{\alpha, \beta})}.
\end{eqnarray*}
Next, we show that $\tilde{\varsigma} \in M^1(\R^2,A_{\alpha, \beta} \otimes \sigma_{\alpha, \beta})$.  Using formula (\ref{eq49}), we obtain 
\begin{eqnarray*}
|\tilde{\varsigma}(x, \xi)|
& = & \left|\left\langle \mathfrak{L}_{g_1, g_2}(\varsigma)({g_1}_{x, \xi}^{(\alpha, \beta)}), {g_2}_{x, \xi}^{(\alpha, \beta)} \right\rangle_{L^2(\R,A_{\alpha, \beta})}\right| \\
& = & \left| \sum_{n=1}^{\infty} s_n(\mathfrak{L}_{g_1, g_2}(\varsigma)) \left\langle {g_1}_{x, \xi}^{(\alpha, \beta)}, v_n \right\rangle_{L^2(\R,A_{\alpha, \beta})} \left\langle u_n, {g_2}_{x, \xi}^{(\alpha, \beta)} \right\rangle_{L^2(\R,A_{\alpha, \beta})} \right| \\
& \leq & \frac{1}{2} \sum_{n=1}^{\infty} s_n(\mathfrak{L}_{g_1, g_2}(\varsigma)) \left( \left| \left\langle {g_1}_{x, \xi}^{(\alpha, \beta)}, v_n \right\rangle_{L^2(\R,A_{\alpha, \beta})}  \right|^2 + \left| \left\langle {g_2}_{x, \xi}^{(\alpha, \beta)}, u_n  \right\rangle_{L^2(\R,A_{\alpha, \beta})} \right|^2 \right). 
\end{eqnarray*}  
Now, using Plancherel's formula (\ref{eq17}) and Fubini's theorem, we get 
\begin{eqnarray}\label{eq50}
\Vert \tilde{\varsigma} \Vert_{L^1(\R^2,A_{\alpha, \beta} \otimes \sigma_{\alpha, \beta})} 
& = & \iint_{\R^2} |\tilde{\varsigma}(x, \xi)| \; d(A_{\alpha, \beta} \otimes \sigma_{\alpha, \beta}) (x, \xi) \nonumber \\
& \leq & \frac{1}{2} \sum_{n=1}^{\infty} s_n(\mathfrak{L}_{g_1, g_2}(\varsigma)) \left( \iint_{\R^2} \left| \left\langle {g_1}_{x, \xi}^{(\alpha, \beta)}, v_n \right\rangle_{L^2(\R,A_{\alpha, \beta})} \right|^2 d(A_{\alpha, \beta} \otimes \sigma_{\alpha, \beta}) (x, \xi) \right. \nonumber \\
&& \;\; \left. + \iint_{\R^2} \left| \left\langle {g_2}_{x, \xi}^{(\alpha, \beta)}, u_n  \right\rangle_{L^2(\R,A_{\alpha, \beta})} \right|^2 d(A_{\alpha, \beta} \otimes \sigma_{\alpha, \beta}) (x, \xi)  \right) \nonumber \\
&\leq & \frac{1}{2} (\Vert g_1 \Vert^2_{M^1(\R,A_{\alpha, \beta})} + \Vert g_2 \Vert^2_{M^1(\R,A_{\alpha, \beta})}) \sum_{n=1}^{\infty} s_n(\mathfrak{L}_{g_1, g_2}(\varsigma)) \nonumber \\
& = & \frac{1}{2} (\Vert g_1 \Vert^2_{M^1(\R,A_{\alpha, \beta})} + \Vert g_2 \Vert^2_{M^1(\R,A_{\alpha, \beta})}) \; \Vert \mathfrak{L}_{g_1, g_2}(\varsigma) \Vert_{S_1} .
\end{eqnarray}
Also, we have
\begin{eqnarray}\label{eq51}
\Vert \tilde{\varsigma} \Vert_{M^1(\R^2,A_{\alpha, \beta} \otimes \sigma_{\alpha, \beta})} = \Vert V_g \tilde{\varsigma} \Vert_{L^1(\R^4,A_{\alpha, \beta} \otimes \sigma_{\alpha, \beta})} 
& = & \Vert \tilde{\varsigma} * M_\xi g^*  \Vert_{L^1(\R^4,A_{\alpha, \beta} \otimes \sigma_{\alpha, \beta})} \nonumber \\
& \leq & \Vert \tilde{\varsigma} \Vert_{L^1(\R^2,A_{\alpha, \beta} \otimes \sigma_{\alpha, \beta})}, 
\end{eqnarray}
where we obtain the estimate with respect to some $g \in \mathcal{S}(\R^2) \setminus \{0\}$ with $\Vert g \Vert_{L^1(\R^2,A_{\alpha, \beta} \otimes \sigma_{\alpha, \beta})} \leq 1$. Finally, using (\ref{eq50}) and (\ref{eq51}), we get 
\[\Vert \tilde{\varsigma} \Vert_{M^1(\R^2,A_{\alpha, \beta} \otimes \sigma_{\alpha, \beta})} \leq  \frac{1}{2} (\Vert g_1 \Vert^2_{M^1(\R,A_{\alpha, \beta})} + \Vert g_2 \Vert^2_{M^1(\R,A_{\alpha, \beta})}) \; \Vert \mathfrak{L}_{g_1, g_2}(\varsigma) \Vert_{S_1} .  \]  
This completes the proof of the theorem.
\end{proof}

\subsection{$M^p(\R,A_{\alpha, \beta})$ Boundedness}
 
In this section, we prove that the localization operators $\mathfrak{L}_{g_1, g_2}(\varsigma) : M^p(\R,A_{\alpha, \beta}) \to M^p(\R,A_{\alpha, \beta})$ are bounded. We begin with the following propositions. 

\begin{proposition}\label{pro4} 
Let $\varsigma \in M^1(\R^2,A_{\alpha, \beta} \otimes \sigma_{\alpha, \beta})$, $g_1 \in  M^{p'}(\R,A_{\alpha, \beta})$ and $g_2 \in  M^p(\R,A_{\alpha, \beta})$, for $1 \leq p \leq \infty$. Then the localization operator $\mathfrak{L}_{g_1, g_2}(\varsigma): M^p(\R,A_{\alpha, \beta}) \to M^p(\R,A_{\alpha, \beta})$ is a bounded linear operator, and we have
\[\Vert \mathfrak{L}_{g_1, g_2}(\varsigma) \Vert_{\mathcal{B}(M^p(\R,A_{\alpha, \beta}))} \leq  \Vert \varsigma  \Vert_{M^1(\R^2,A_{\alpha, \beta} \otimes \sigma_{\alpha, \beta})} \; \Vert g_1 \Vert_{ M^{p'}(\R,A_{\alpha, \beta})} \; \Vert g_2 \Vert_{M^p(\R,A_{\alpha, \beta})}. \] 
\end{proposition}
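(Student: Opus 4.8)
The plan is to run, for the windowed Opdam--Cherednik transform, the three-step argument that handles Gabor localization operators with an $L^{1}$-type symbol: write $\mathfrak{L}_{g_{1},g_{2}}(\varsigma)(f)$ as a vector-valued integral in $M^{p}(\R,A_{\alpha,\beta})$, pull the $M^{p}$-norm inside by Minkowski's integral inequality, and then dominate the two windowed transforms that appear by an $L^{\infty}$--$L^{1}$ pairing against $\varsigma$.

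Fix $f\in M^{p}(\R,A_{\alpha,\beta})$. By definition \eqref{eq41},
\[\mathfrak{L}_{g_{1},g_{2}}(\varsigma)(f)=\iint_{\R^{2}}\varsigma(x,\xi)\,\W^{(\alpha,\beta)}_{g_{1}}(f)(x,\xi)\,{g_{2}}_{x,\xi}^{(\alpha,\beta)}(-\,\cdot)\;d(A_{\alpha,\beta}\otimes\sigma_{\alpha,\beta})(x,\xi),\]
which I read as an $M^{p}(\R,A_{\alpha,\beta})$-valued integral (for $p=\infty$ in the weak-$*$ sense of $M^{\infty}(\R,A_{\alpha,\beta})=(M^{1}(\R,A_{\alpha,\beta}))'$, or, equivalently, testing directly against $M^{1}(\R,A_{\alpha,\beta})$). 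Minkowski's integral inequality then gives
\[\bigl\|\mathfrak{L}_{g_{1},g_{2}}(\varsigma)(f)\bigr\|_{M^{p}(\R,A_{\alpha,\beta})}\le\iint_{\R^{2}}|\varsigma(x,\xi)|\,\bigl|\W^{(\alpha,\beta)}_{g_{1}}(f)(x,\xi)\bigr|\,\bigl\|{g_{2}}_{x,\xi}^{(\alpha,\beta)}(-\,\cdot)\bigr\|_{M^{p}(\R,A_{\alpha,\beta})}\;d(A_{\alpha,\beta}\otimes\sigma_{\alpha,\beta})(x,\xi).\]
The two ingredients I would then establish are: (a) the generalized time-frequency shift is bounded on every modulation space, uniformly in the shift, i.e. $\sup_{(x,\xi)\in\R^{2}}\bigl\|{g}_{x,\xi}^{(\alpha,\beta)}(-\,\cdot)\bigr\|_{M^{r}(\R,A_{\alpha,\beta})}\le\|g\|_{M^{r}(\R,A_{\alpha,\beta})}$ for every $1\le r\le\infty$; and (b) the pointwise estimate $\bigl|\W^{(\alpha,\beta)}_{g_{1}}(f)(x,\xi)\bigr|\le\|f\|_{M^{p}(\R,A_{\alpha,\beta})}\,\|g_{1}\|_{M^{p'}(\R,A_{\alpha,\beta})}$. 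For (b) I would use that, by \eqref{eq14}, $\W^{(\alpha,\beta)}_{g_{1}}(f)(x,\xi)=\bigl\langle f,\,{g_{1}}_{x,\xi}^{(\alpha,\beta)}(-\,\cdot)\bigr\rangle_{L^{2}(\R,A_{\alpha,\beta})}$, estimate this pairing by the Hölder-type inequality $|\langle u,v\rangle_{L^{2}(\R,A_{\alpha,\beta})}|\le\|u\|_{M^{p}(\R,A_{\alpha,\beta})}\|v\|_{M^{p'}(\R,A_{\alpha,\beta})}$ that follows from the modulation-space duality recalled in Section \ref{sec3}, and then apply (a) with $r=p'$. Substituting (a) (with $r=p$) and (b) into the displayed inequality, and using the embedding $M^{1}(\R^{2},A_{\alpha,\beta}\otimes\sigma_{\alpha,\beta})\hookrightarrow L^{1}(\R^{2},A_{\alpha,\beta}\otimes\sigma_{\alpha,\beta})$ from Section \ref{sec3} to replace $\iint_{\R^{2}}|\varsigma|\,d(A_{\alpha,\beta}\otimes\sigma_{\alpha,\beta})$ by $\|\varsigma\|_{M^{1}(\R^{2},A_{\alpha,\beta}\otimes\sigma_{\alpha,\beta})}$, one arrives at the stated norm bound; linearity of $\mathfrak{L}_{g_{1},g_{2}}(\varsigma)$ is immediate from \eqref{eq41}.

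The step I expect to be the real obstacle is (a): that $g\mapsto\bigl(\tau_{x}^{(\alpha,\beta)}\M_{\xi}^{(\alpha,\beta)}g\bigr)(-\,\cdot)$ is bounded on $M^{r}(\R,A_{\alpha,\beta})$ with a constant independent of $(x,\xi)$ and of $r$. This is where the weight $A_{\alpha,\beta}$ has to be used essentially: one has at one's disposal the uniform $L^{r}(\R,A_{\alpha,\beta})$-bound \eqref{eq09} for $\tau_{x}^{(\alpha,\beta)}$, the $L^{2}(\R,A_{\alpha,\beta})$-isometry property \eqref{eq12} for $\M_{\xi}^{(\alpha,\beta)}$, the evenness of the measure $A_{\alpha,\beta}(x)\,dx$ (giving reflection invariance), and the ordinary time-frequency-shift invariance of $M^{r}(\R,A_{\alpha,\beta})$ from Section \ref{sec3}; the content of (a) is to transfer these facts to the modulation scale for every exponent, with particular care at the endpoints $r=1,\infty$ and in checking that no $(x,\xi)$-dependent constant is produced. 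Everything else — interpreting the defining formula as a vector-valued integral, Minkowski's inequality, the Hölder duality for $M^{p}$ and $M^{p'}$, and the $M^{1}\hookrightarrow L^{1}$ embedding — is routine, and, with the normalizations in force, the constants produced by (a), (b) and the embedding are all one, so the final estimate carries no extra constant.
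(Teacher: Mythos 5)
Your proposal is correct and, once unwound, rests on exactly the same two ingredients as the paper's proof; only the packaging differs. The paper works in the weak formulation: it first records the pointwise bound $|\W^{(\alpha,\beta)}_{g}(f)(x,\xi)|\le\Vert f\Vert_{M^p(\R,A_{\alpha,\beta})}\Vert g\Vert_{M^{p'}(\R,A_{\alpha,\beta})}$ (its (\ref{eq52}), which is your step (b)), then tests $\mathfrak{L}_{g_1,g_2}(\varsigma)(f)$ against $h\in M^{p'}(\R,A_{\alpha,\beta})$ via (\ref{eq42}) and applies that bound twice --- to $\W^{(\alpha,\beta)}_{g_1}(f)$ and to $\W^{(\alpha,\beta)}_{g_2}(h)$ --- before integrating $|\varsigma|$ and dominating its $L^1$-norm by its $M^1$-norm. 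You instead keep the strong (vector-valued) formulation and use Minkowski's integral inequality, which forces you to bound $\Vert{g_2}^{(\alpha,\beta)}_{x,\xi}(-\,\cdot)\Vert_{M^p}$ directly rather than routing the $g_2$ factor through the dual pairing; for $p=\infty$ you sensibly fall back to the weak reading, at which point the two arguments coincide. The one substantive observation is that the obstacle you isolate as step (a) --- the uniform bound $\Vert g^{(\alpha,\beta)}_{x,\xi}(-\,\cdot)\Vert_{M^r(\R,A_{\alpha,\beta})}\le\Vert g\Vert_{M^r(\R,A_{\alpha,\beta})}$ for the generalized time-frequency shifts --- is not proved in the paper either: it is silently absorbed into the one-line derivation of (\ref{eq52}) ``from H\"older's inequality and the relation (\ref{eq14})'', since the H\"older/duality pairing only produces $\Vert f\Vert_{M^p}\Vert g^{(\alpha,\beta)}_{x,\xi}(-\,\cdot)\Vert_{M^{p'}}$ and one still needs the shifted window's norm to be controlled by $\Vert g\Vert_{M^{p'}}$ uniformly in $(x,\xi)$. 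So your proof is at the same level of completeness as the paper's; you are merely more explicit about where the unproven ingredient sits.
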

\begin{proof}
Let $f \in M^p(\R,A_{\alpha, \beta})$, $1 \leq p \leq \infty$ and $g \in M^{p'}(\R,A_{\alpha, \beta})$. Then from H\"older's inequality and the relation (\ref{eq14}), we have 
\begin{equation}\label{eq52}
\left|\W^{(\alpha, \beta)}_{g}(f)(x, \xi) \right| \leq \Vert f  \Vert_{M^p(\R,A_{\alpha, \beta})} \; \Vert g  \Vert_{M^{p'}(\R,A_{\alpha, \beta})}. 
\end{equation} 
For every $f \in M^p(\R,A_{\alpha, \beta})$ and $h \in M^{p'}(\R,A_{\alpha, \beta})$, using the relations (\ref{eq42}) and (\ref{eq52}), we obtain
\begin{eqnarray*}
&& \left| \left\langle \mathfrak{L}_{g_1, g_2}(\varsigma)(f), h \right\rangle \right|  \\
&& \leq \iint_{\R^2} |\varsigma(x, \xi)| \; \left|\W^{(\alpha, \beta)}_{g_1}(f)(x, \xi) \right| \left|   \W^{(\alpha, \beta)}_{g_2}(h)(x, \xi) \right| \; d(A_{\alpha, \beta} \otimes \sigma_{\alpha, \beta}) (x, \xi) \\
&& \leq \Vert \varsigma \Vert_{M^1(\R^2,A_{\alpha, \beta} \otimes \sigma_{\alpha, \beta})} \; \Vert f \Vert_{M^p(\R,A_{\alpha, \beta})} \;  \Vert g_1 \Vert_{M^{p'}(\R,A_{\alpha, \beta})} \; \Vert h \Vert_{M^{p'}(\R,A_{\alpha, \beta})} \; \Vert g_2 \Vert_{M^p(\R,A_{\alpha, \beta})}. 
\end{eqnarray*}
Hence,
\[\Vert \mathfrak{L}_{g_1, g_2}(\varsigma) \Vert_{\mathcal{B}(M^p(\R,A_{\alpha, \beta}))} \leq  \Vert \varsigma  \Vert_{M^1(\R^2,A_{\alpha, \beta} \otimes \sigma_{\alpha, \beta})} \; \Vert g_1 \Vert_{ M^{p'}(\R,A_{\alpha, \beta})} \; \Vert g_2 \Vert_{M^p(\R,A_{\alpha, \beta})}. \] 
\end{proof}

Next, we obtain an $M^p(\R,A_{\alpha, \beta})$-boundedness result using the Schur technique. The estimate obtained for the norm $\Vert \mathfrak{L}_{g_1, g_2}(\varsigma) \Vert_{\mathcal{B}(M^p(\R,A_{\alpha, \beta}))}$ is different from the previous Proposition. 

\begin{proposition}\label{pro5}
Let $\varsigma \in M^1(\R^2,A_{\alpha, \beta} \otimes \sigma_{\alpha, \beta})$ and $g_1, g_2 \in  M^1(\R,A_{\alpha, \beta}) \cap L^\infty(\R,A_{\alpha, \beta})$. Then there exists a bounded linear operator $\mathfrak{L}_{g_1, g_2}(\varsigma): M^p(\R,A_{\alpha, \beta}) \to  M^p(\R,A_{\alpha, \beta}) $, $1 \leq p \leq \infty$ such that
\begin{eqnarray*}
&& \Vert \mathfrak{L}_{g_1, g_2}(\varsigma) \Vert_{\mathcal{B}(M^p(\R,A_{\alpha, \beta}))} \\
&& \leq \max(\Vert g_1 \Vert_{M^1(\R,A_{\alpha, \beta})} \Vert g_2 \Vert_{L^\infty(\R,A_{\alpha, \beta})}, \Vert g_1 \Vert_{L^\infty(\R,A_{\alpha, \beta})} \Vert g_2 \Vert_{M^1(\R,A_{\alpha, \beta})}) \; \Vert \varsigma \Vert_{M^1(\R^2,A_{\alpha, \beta} \otimes \sigma_{\alpha, \beta})}.
\end{eqnarray*}
\end{proposition}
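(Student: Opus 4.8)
The approach I would take is to represent $\mathfrak{L}_{g_1,g_2}(\varsigma)$ as an integral operator and then apply Schur's test, arranging the two endpoint estimates so that they produce exactly the two quantities $\Vert g_1\Vert_{M^1(\R,A_{\alpha,\beta})}\Vert g_2\Vert_{L^\infty(\R,A_{\alpha,\beta})}$ and $\Vert g_1\Vert_{L^\infty(\R,A_{\alpha,\beta})}\Vert g_2\Vert_{M^1(\R,A_{\alpha,\beta})}$ whose maximum appears in the statement. Starting from the weak formula (\ref{eq42}) and substituting the integral expressions (\ref{eq14}) for $\W^{(\alpha,\beta)}_{g_1}(f)$ and $\W^{(\alpha,\beta)}_{g_2}(h)$, Fubini's theorem (legitimate because $\varsigma\in M^1(\R^2,A_{\alpha,\beta}\otimes\sigma_{\alpha,\beta})\subset L^1(\R^2,A_{\alpha,\beta}\otimes\sigma_{\alpha,\beta})$ and both windows lie in $L^1(\R,A_{\alpha,\beta})\cap L^\infty(\R,A_{\alpha,\beta})$) rewrites the operator, for $f\in L^2(\R,A_{\alpha,\beta})$, as
\[ \mathfrak{L}_{g_1,g_2}(\varsigma)(f)(y)=\int_\R K(y,s)\,f(s)\,A_{\alpha,\beta}(s)\,ds,\qquad K(y,s)=\iint_{\R^2}\varsigma(x,\xi)\,\overline{{g_1}_{x,\xi}^{(\alpha,\beta)}(-s)}\;{g_2}_{x,\xi}^{(\alpha,\beta)}(-y)\;d(A_{\alpha,\beta}\otimes\sigma_{\alpha,\beta})(x,\xi). \]

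Second, I would record uniform size bounds for the time-frequency shifted windows. Since $d\mu_{x,y}^{(\alpha,\beta)}$ has total mass $1$ and $\tau_x^{(\alpha,\beta)}$ is positivity preserving, $\tau_x^{(\alpha,\beta)}$ is a contraction on both $L^1(\R,A_{\alpha,\beta})$ (cf. (\ref{eq08}), (\ref{eq09})) and $L^\infty(\R,A_{\alpha,\beta})$; combining this with the definition (\ref{eq11}) of $\M^{(\alpha,\beta)}_\xi$ and the isometry (\ref{eq12}), and using $M^1(\R,A_{\alpha,\beta})\subset L^1(\R,A_{\alpha,\beta})$, one should obtain, uniformly in $(x,\xi)\in\R^2$ and for $i=1,2$,
\[ \left\Vert {g_i}_{x,\xi}^{(\alpha,\beta)}\right\Vert_{L^1(\R,A_{\alpha,\beta})}\le\Vert g_i\Vert_{M^1(\R,A_{\alpha,\beta})},\qquad \left\Vert {g_i}_{x,\xi}^{(\alpha,\beta)}\right\Vert_{L^\infty(\R,A_{\alpha,\beta})}\le\Vert g_i\Vert_{L^\infty(\R,A_{\alpha,\beta})}. \]

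With these bounds the Schur estimates are mechanical. For fixed $y$, estimating $|{g_2}_{x,\xi}^{(\alpha,\beta)}(-y)|\le\Vert g_2\Vert_{L^\infty(\R,A_{\alpha,\beta})}$, integrating $|{g_1}_{x,\xi}^{(\alpha,\beta)}(-s)|\,A_{\alpha,\beta}(s)$ over $s$, and using $\Vert\varsigma\Vert_{L^1(\R^2,A_{\alpha,\beta}\otimes\sigma_{\alpha,\beta})}\le\Vert\varsigma\Vert_{M^1(\R^2,A_{\alpha,\beta}\otimes\sigma_{\alpha,\beta})}$ gives
\[ \sup_{y\in\R}\int_\R|K(y,s)|\,A_{\alpha,\beta}(s)\,ds\le\Vert\varsigma\Vert_{M^1(\R^2,A_{\alpha,\beta}\otimes\sigma_{\alpha,\beta})}\,\Vert g_1\Vert_{M^1(\R,A_{\alpha,\beta})}\,\Vert g_2\Vert_{L^\infty(\R,A_{\alpha,\beta})}, \]
and, interchanging the roles of $s$ and $y$ (hence of $g_1$ and $g_2$),
\[ \sup_{s\in\R}\int_\R|K(y,s)|\,A_{\alpha,\beta}(y)\,dy\le\Vert\varsigma\Vert_{M^1(\R^2,A_{\alpha,\beta}\otimes\sigma_{\alpha,\beta})}\,\Vert g_1\Vert_{L^\infty(\R,A_{\alpha,\beta})}\,\Vert g_2\Vert_{M^1(\R,A_{\alpha,\beta})}. \]
Schur's lemma then produces a bounded extension of $\mathfrak{L}_{g_1,g_2}(\varsigma)$ to $L^p(\R,A_{\alpha,\beta})$ for every $1\le p\le\infty$ with operator norm at most the $(1/p',1/p)$-interpolant of the two right-hand sides, in particular at most their maximum. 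To land in $M^p(\R,A_{\alpha,\beta})$ rather than $L^p(\R,A_{\alpha,\beta})$ one either repeats the same kernel computation with the modulation norm $\Vert\,\cdot\,\Vert_{M^p(\R,A_{\alpha,\beta})}=\Vert V_\phi(\,\cdot\,)\Vert_{L^p(\R^2,A_{\alpha,\beta}\otimes A_{\alpha,\beta})}$ in place of the $L^p$-norm, or interpolates between the resulting $M^1$ and $M^\infty$ endpoint estimates, using the embeddings and interpolation of modulation spaces recalled in Section \ref{sec3}; either way one obtains the claimed bound.

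The step I expect to be genuinely delicate is the second one — the uniform $L^1(\R,A_{\alpha,\beta})$- and $L^\infty(\R,A_{\alpha,\beta})$-size control of ${g_i}_{x,\xi}^{(\alpha,\beta)}=\tau_x^{(\alpha,\beta)}\M^{(\alpha,\beta)}_\xi g_i$. The translation factor is harmless, as $\tau_x^{(\alpha,\beta)}$ is a contraction in each $L^p(\R,A_{\alpha,\beta})$; but the generalized modulation $\M^{(\alpha,\beta)}_\xi$ is known a priori only to be an $L^2$-isometry (\ref{eq12}), so one has to argue that it does not enlarge the $L^1$- and $L^\infty$-sizes beyond what the norms $\Vert g_i\Vert_{M^1(\R,A_{\alpha,\beta})}$ and $\Vert g_i\Vert_{L^\infty(\R,A_{\alpha,\beta})}$ already control — and this is precisely the point at which the hypothesis $g_1,g_2\in M^1(\R,A_{\alpha,\beta})\cap L^\infty(\R,A_{\alpha,\beta})$ is used. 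A secondary, more routine difficulty is justifying the passage of the Schur estimate from the Lebesgue spaces $L^p(\R,A_{\alpha,\beta})$ to the modulation spaces $M^p(\R,A_{\alpha,\beta})$.
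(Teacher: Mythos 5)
Your proposal follows essentially the same route as the paper: the same kernel representation $\mathcal{K}(y,z)=\iint_{\R^2}\varsigma(x,\xi)\,\overline{{g_1}_{x,\xi}^{(\alpha,\beta)}(-z)}\,{g_2}_{x,\xi}^{(\alpha,\beta)}(-y)\,d(A_{\alpha,\beta}\otimes\sigma_{\alpha,\beta})(x,\xi)$, the same two Schur estimates (one uniform in $z$, one uniform in $y$), and the same appeal to Schur's lemma to obtain the stated maximum bound. You are in fact more explicit than the paper about the two points it passes over silently — the uniform-in-$(x,\xi)$ control of $\Vert{g_i}_{x,\xi}^{(\alpha,\beta)}\Vert_{L^1(\R,A_{\alpha,\beta})}$ and $\Vert{g_i}_{x,\xi}^{(\alpha,\beta)}\Vert_{L^\infty(\R,A_{\alpha,\beta})}$, and the transfer of the Schur bound from $L^p(\R,A_{\alpha,\beta})$ to $M^p(\R,A_{\alpha,\beta})$ — both of which the paper's proof also relies on without separate justification.
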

\begin{proof}
Let $\mathcal{K}$ be the function defined on $\R^2$ by 
\begin{equation}\label{eq53}
\mathcal{K}(y, z) = \iint_{\R^2} \varsigma(x, \xi)\; \overline{{g_1}_{x, \xi}^{(\alpha, \beta)} (- z)} \; {g_2}_{x, \xi}^{(\alpha, \beta)} (- y)  \; d(A_{\alpha, \beta} \otimes \sigma_{\alpha, \beta}) (x, \xi).
\end{equation}
Then we define 
\[ \mathfrak{L}_{g_1, g_2}(\varsigma)(f)(y)= \int_{\R}  \mathcal{K}(y, z) f(z) \; A_{\alpha, \beta}(z)\; dz. \]
Now, using Fubini's theorem, for any $z \in \R$, we obtain
\begin{eqnarray*}
&& \int_{\R} | \mathcal{K}(y, z) | \; A_{\alpha, \beta}(y) \;dy \\
&& \leq \int_{\R} \left( \iint_{\R^2} |\varsigma(x, \xi)| \left|\overline{{g_1}_{x, \xi}^{(\alpha, \beta)} (- z)} \right| \left|{g_2}_{x, \xi}^{(\alpha, \beta)} (- y) \right| d(A_{\alpha, \beta} \otimes \sigma_{\alpha, \beta}) (x, \xi) \right) A_{\alpha, \beta}(y) \;dy \\
&& \leq \Vert g_1 \Vert_{L^\infty(\R,A_{\alpha, \beta})} \; \Vert g_2 \Vert_{M^1(\R,A_{\alpha, \beta})} \; \Vert \varsigma \Vert_{M^1(\R^2,A_{\alpha, \beta} \otimes \sigma_{\alpha, \beta})}, 
\end{eqnarray*}
and for any $y \in \R$, we obtain
\[ \int_{\R} | \mathcal{K}(y, z) | \; A_{\alpha, \beta}(z) \;dz  \leq \Vert g_1 \Vert_{M^1(\R,A_{\alpha, \beta})} \; \Vert g_2 \Vert_{L^\infty(\R,A_{\alpha, \beta})} \; \Vert \varsigma \Vert_{M^1(\R^2,A_{\alpha, \beta} \otimes \sigma_{\alpha, \beta})}.   \]
Thus using Schur's lemma (see \cite{fol95}), we conclude that, for $1 \leq p \leq \infty$, $\mathfrak{L}_{g_1, g_2}(\varsigma): M^p(\R,A_{\alpha, \beta}) \to  M^p(\R,A_{\alpha, \beta}) $ is a bounded linear operator, and we have 
\begin{eqnarray*}
&& \Vert \mathfrak{L}_{g_1, g_2}(\varsigma) \Vert_{\mathcal{B}(M^p(\R,A_{\alpha, \beta}))} \\
&& \leq \max(\Vert g_1 \Vert_{M^1(\R,A_{\alpha, \beta})} \Vert g_2 \Vert_{L^\infty(\R,A_{\alpha, \beta})}, \Vert g_1 \Vert_{L^\infty(\R,A_{\alpha, \beta})} \Vert g_2 \Vert_{M^1(\R,A_{\alpha, \beta})}) \; \Vert \varsigma \Vert_{M^1(\R^2,A_{\alpha, \beta} \otimes \sigma_{\alpha, \beta})}.
\end{eqnarray*}
\end{proof} 
 
\begin{remark}
From the Proposition \ref{pro5}, we conclude that the bounded linear operator on $M^p(\R,A_{\alpha, \beta})$, $1 \leq p \leq \infty$, obtained in Proposition \ref{pro4} is actually the integral operator on $M^p(\R,A_{\alpha, \beta})$ with the kernel $\mathcal{K}$ given by (\ref{eq53}).
\end{remark}

\begin{theorem}\label{th3}
Let $\varsigma \in M^p(\R^2,A_{\alpha, \beta} \otimes \sigma_{\alpha, \beta})$, $1 \leq p \leq \infty$ and $g_1, g_2 \in  M^1(\R,A_{\alpha, \beta})$. Then the localization operator  $ \mathfrak{L}_{g_1, g_2}(\varsigma)$ is in $\mathcal{B}(M^q(\R,A_{\alpha, \beta} ))$, $1 \leq q \leq \infty$ and we have 
\[ \Vert \mathfrak{L}_{g_1, g_2}(\varsigma) \Vert_{\mathcal{B}(M^q(\R,A_{\alpha, \beta} ))} \leq \Vert \varsigma \Vert_{M^p(\R^2,A_{\alpha, \beta} \otimes \sigma_{\alpha, \beta})} \; \Vert g_1 \Vert_{M^1(\R,A_{\alpha, \beta})} \; \Vert g_2 \Vert_{M^1(\R,A_{\alpha, \beta})}. \]
\end{theorem}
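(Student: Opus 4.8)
The plan is to prove the estimate first at the endpoints $p=1$ and $p=\infty$ and then interpolate in the symbol $\varsigma$, exactly in the spirit of the proof of Theorem~\ref{th1}, closing with a density argument to remove the restriction to $M^1\cap M^\infty$. For $p=1$ this is almost immediate: by the continuous inclusions $M^1(\R,A_{\alpha,\beta})\hookrightarrow M^{q'}(\R,A_{\alpha,\beta})$ and $M^1(\R,A_{\alpha,\beta})\hookrightarrow M^{q}(\R,A_{\alpha,\beta})$ (with $\Vert\cdot\Vert_{M^{q'}},\Vert\cdot\Vert_{M^{q}}\le\Vert\cdot\Vert_{M^1}$, cf.\ the inclusion chain of Section~\ref{sec3}), Proposition~\ref{pro4} applied with the exponent $q$ in place of $p$ gives, for $\varsigma\in M^1(\R^2,A_{\alpha,\beta}\otimes\sigma_{\alpha,\beta})$,
\[
\left\Vert\mathfrak{L}_{g_1,g_2}(\varsigma)\right\Vert_{\mathcal{B}(M^{q}(\R,A_{\alpha,\beta}))}\le\Vert\varsigma\Vert_{M^1(\R^2,A_{\alpha,\beta}\otimes\sigma_{\alpha,\beta})}\;\Vert g_1\Vert_{M^1(\R,A_{\alpha,\beta})}\;\Vert g_2\Vert_{M^1(\R,A_{\alpha,\beta})},
\]
uniformly in $q\in[1,\infty]$.

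The substantial step is the endpoint $p=\infty$. For $\varsigma\in M^\infty(\R^2,A_{\alpha,\beta}\otimes\sigma_{\alpha,\beta})$, $f\in M^{q}(\R,A_{\alpha,\beta})$ and $h\in M^{q'}(\R,A_{\alpha,\beta})$, I would use the weak formulation~(\ref{eq42}) together with the duality $\bigl(M^1(\R^2,A_{\alpha,\beta}\otimes\sigma_{\alpha,\beta})\bigr)'=M^\infty(\R^2,A_{\alpha,\beta}\otimes\sigma_{\alpha,\beta})$ to get
\[
\left|\left\langle\mathfrak{L}_{g_1,g_2}(\varsigma)(f),h\right\rangle_{L^2(\R,A_{\alpha,\beta})}\right|\le\Vert\varsigma\Vert_{M^\infty(\R^2,A_{\alpha,\beta}\otimes\sigma_{\alpha,\beta})}\;\left\Vert\W^{(\alpha,\beta)}_{g_1}(f)\cdot\overline{\W^{(\alpha,\beta)}_{g_2}(h)}\right\Vert_{M^1(\R^2,A_{\alpha,\beta}\otimes\sigma_{\alpha,\beta})},
\]
and then estimate the $M^1(\R^2)$-norm exactly as in the chain~(\ref{eq46}): writing the STFT as a convolution and fixing a window $g\in\mathcal{S}(\R^2)\setminus\{0\}$ with $\Vert g\Vert_{L^1(\R^2,A_{\alpha,\beta}\otimes\sigma_{\alpha,\beta})}\le1$, Young's inequality reduces it to $\bigl\Vert\W^{(\alpha,\beta)}_{g_1}(f)\cdot\overline{\W^{(\alpha,\beta)}_{g_2}(h)}\bigr\Vert_{L^1(\R^2,A_{\alpha,\beta}\otimes\sigma_{\alpha,\beta})}$, which by H\"older's inequality is at most $\bigl\Vert\W^{(\alpha,\beta)}_{g_1}(f)\bigr\Vert_{L^{q}(\R^2)}\bigl\Vert\W^{(\alpha,\beta)}_{g_2}(h)\bigr\Vert_{L^{q'}(\R^2)}$. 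To finish I would invoke the mapping property $\bigl\Vert\W^{(\alpha,\beta)}_g(\phi)\bigr\Vert_{L^{r}(\R^2,A_{\alpha,\beta}\otimes\sigma_{\alpha,\beta})}\le C\,\Vert g\Vert_{M^1(\R,A_{\alpha,\beta})}\,\Vert\phi\Vert_{M^{r}(\R,A_{\alpha,\beta})}$ of the windowed Opdam--Cherednik transform, valid for $g\in M^1$ and $1\le r\le\infty$; this interpolates between Plancherel's formula~(\ref{eq17}) ($r=2$, via $M^1\hookrightarrow L^2$) and the pointwise bound~(\ref{eq52}) ($r=\infty$), supplemented for $1\le r<2$ by the endpoint $\W^{(\alpha,\beta)}_g\colon M^1(\R,A_{\alpha,\beta})\to L^1(\R^2,A_{\alpha,\beta}\otimes\sigma_{\alpha,\beta})$, the analogue of the classical statement $V_g\colon M^1\to L^1$. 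This yields $\Vert\mathfrak{L}_{g_1,g_2}(\varsigma)\Vert_{\mathcal{B}(M^{q}(\R,A_{\alpha,\beta}))}\le C\,\Vert\varsigma\Vert_{M^\infty}\,\Vert g_1\Vert_{M^1}\,\Vert g_2\Vert_{M^1}$ for $1<q<\infty$, the cases $q=\infty$ and $q=1$ following, respectively, from $\bigl(M^1(\R,A_{\alpha,\beta})\bigr)'=M^\infty(\R,A_{\alpha,\beta})$ and from the integral-kernel/Schur-lemma representation used in Proposition~\ref{pro5}.

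With both endpoints in hand, the linear map $\varsigma\mapsto\mathfrak{L}_{g_1,g_2}(\varsigma)$ is bounded from $M^1(\R^2,A_{\alpha,\beta}\otimes\sigma_{\alpha,\beta})$ and from $M^\infty(\R^2,A_{\alpha,\beta}\otimes\sigma_{\alpha,\beta})$ into $\mathcal{B}(M^{q}(\R,A_{\alpha,\beta}))$, with norm controlled by $\Vert g_1\Vert_{M^1}\,\Vert g_2\Vert_{M^1}$. Since the modulation spaces interpolate exactly like the corresponding mixed-norm spaces $L^{p,q}$, the Riesz--Thorin interpolation theorem (see \cite{ste56}) yields the asserted estimate for every $\varsigma\in M^1(\R^2,A_{\alpha,\beta}\otimes\sigma_{\alpha,\beta})\cap M^\infty(\R^2,A_{\alpha,\beta}\otimes\sigma_{\alpha,\beta})$ and $1<p<\infty$; then, for arbitrary $\varsigma\in M^{p}(\R^2,A_{\alpha,\beta}\otimes\sigma_{\alpha,\beta})$ with $p<\infty$, one approximates $\varsigma$ by $\varsigma_n\in M^1(\R^2)\cap M^\infty(\R^2)$, observes that $\{\mathfrak{L}_{g_1,g_2}(\varsigma_n)\}_n$ is Cauchy in $\mathcal{B}(M^{q}(\R,A_{\alpha,\beta}))$, and passes to the limit, exactly as in the proofs of Theorems~\ref{th1} and~\ref{th2}; the case $p=\infty$ is already covered.

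The main obstacle is the endpoint $p=\infty$, and inside it the $L^{r}$-boundedness $\W^{(\alpha,\beta)}_g\colon M^{r}(\R,A_{\alpha,\beta})\to L^{r}(\R^2,A_{\alpha,\beta}\otimes\sigma_{\alpha,\beta})$ of the windowed Opdam--Cherednik transform for $g\in M^1$ and every $1\le r\le\infty$ --- the endpoint $r=1$ in particular is not an immediate consequence of the properties collected in Section~\ref{sec4} and must be proved separately. One must also track the constants carefully so that they stay independent of $q$, which is what makes interpolation in $p$ produce the stated $q$-free bound, and the non-reflexive cases $q\in\{1,\infty\}$ require the minor additional arguments indicated above.
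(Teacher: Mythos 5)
Your overall strategy (endpoints in the symbol plus Riesz--Thorin plus density) is a genuinely different route from the paper's, which handles all $1\le p\le\infty$ in one stroke: the paper pairs $\mathfrak{L}_{g_1,g_2}(\varsigma)(f)$ against $h\in M^{q'}(\R,A_{\alpha,\beta})$, uses the duality $M^p$--$M^{p'}$ on the time-frequency plane, and then bounds $\bigl\Vert \W^{(\alpha,\beta)}_{g_1}(f)\cdot\overline{\W^{(\alpha,\beta)}_{g_2}(h)}\bigr\Vert_{M^{p'}}$ directly via the chain (\ref{eq48}): Young's inequality reduces the $M^{p'}$-norm to the $L^1(\R^2)$-norm of the product, H\"older splits it, and the convolution relation $\Vert \phi *_{\alpha,\beta}\overline{\M^{(\alpha,\beta)}_\xi g}\Vert_{M^{q}}\le\Vert \phi\Vert_{M^{q}}\Vert g\Vert_{M^1}$ finishes, uniformly in $p'$. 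In fact this makes your interpolation step redundant: your $p=\infty$ endpoint is exactly the estimate $\bigl\Vert \W^{(\alpha,\beta)}_{g_1}(f)\cdot\overline{\W^{(\alpha,\beta)}_{g_2}(h)}\bigr\Vert_{M^1}\le\Vert f\Vert_{M^q}\Vert h\Vert_{M^{q'}}\Vert g_1\Vert_{M^1}\Vert g_2\Vert_{M^1}$, and since $\Vert\cdot\Vert_{M^{p'}}\le\Vert\cdot\Vert_{M^{1}}$ on $\R^2$, that single estimate already yields the claimed bound for every $p$ by one application of duality --- no Riesz--Thorin, no density argument, no Cauchy sequences. So the detour costs effort without buying generality.

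Two soft spots remain in your write-up. First, the mapping property $\W^{(\alpha,\beta)}_g\colon M^{r}(\R,A_{\alpha,\beta})\to L^{r}(\R^2,A_{\alpha,\beta}\otimes\sigma_{\alpha,\beta})$ for $g\in M^1$ and \emph{all} $1\le r\le\infty$, on which your $p=\infty$ endpoint rests, is stated but not proved; you correctly flag the case $r=1$ as not following from Section \ref{sec4}, but flagging is not proving. The paper replaces this with the convolution inequality $M^q *_{\alpha,\beta} M^1\subseteq M^q$ (the last line of (\ref{eq48})), which carries the same analytic content; either way this lemma is the actual core of the theorem and should be established, not invoked. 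Second, your appeal to Proposition \ref{pro5} to cover $q=1$ at the endpoint $p=\infty$ does not match hypotheses: that proposition assumes $\varsigma\in M^1(\R^2,A_{\alpha,\beta}\otimes\sigma_{\alpha,\beta})$ and $g_1,g_2\in M^1\cap L^\infty$, whereas you need $\varsigma\in M^\infty$ and windows only in $M^1$. The paper's uniform duality argument covers $q\in\{1,\infty\}$ with no separate treatment, which is another reason to prefer it.
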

\begin{proof}
Let $f \in M^q(\R,A_{\alpha, \beta} )$ and $h \in M^{q'}(\R,A_{\alpha, \beta} )$, where $q'$ is the conjugate exponent of $q$. Using the duality between the modulation spaces $ M^p(\R^2,A_{\alpha, \beta} \otimes \sigma_{\alpha, \beta})$ and $M^{p'}(\R^2,A_{\alpha, \beta} \otimes \sigma_{\alpha, \beta})$, we obtain 
\begin{eqnarray}\label{eq47}
\left| \left\langle \mathfrak{L}_{g_1, g_2}(\varsigma)(f), h \right\rangle \right|  
& \leq & \iint_{\R^2} |\varsigma(x, \xi)| \; \left|\W^{(\alpha, \beta)}_{g_1}(f)(x, \xi) \; \overline{\W^{(\alpha, \beta)}_{g_2}(h)(x, \xi)} \right| \; d(A_{\alpha, \beta} \otimes \sigma_{\alpha, \beta}) (x, \xi) \nonumber \\ 
& \leq & \left\Vert \varsigma \right\Vert_{M^p(\R^2,A_{\alpha, \beta} \otimes \sigma_{\alpha, \beta})} \left\Vert \W^{(\alpha, \beta)}_{g_1}(f)  \cdot \overline{\W^{(\alpha, \beta)}_{g_2}(h)} \right\Vert_{M^{p'}(\R^2,A_{\alpha, \beta} \otimes \sigma_{\alpha, \beta})}.
\end{eqnarray}
Since the definition of $M^{p'}(\R^2,A_{\alpha, \beta} \otimes \sigma_{\alpha, \beta})$ is independent of the choice of the window $g$, we estimate the STFT of $\W^{(\alpha, \beta)}_{g_1}(f) \cdot \overline{\W^{(\alpha, \beta)}_{g_2}(h)}$ with respect to some $g \in \mathcal{S}(\R^2) \setminus \{0\}$ with $\Vert g \Vert_{L^{p'}(\R^2,A_{\alpha, \beta} \otimes \sigma_{\alpha, \beta})} \leq 1$. Using Young's convolution inequality and H\"older's inequality, we get
\begin{eqnarray}\label{eq48}
&& \left\Vert \W^{(\alpha, \beta)}_{g_1}(f)  \cdot \overline{\W^{(\alpha, \beta)}_{g_2}(h)} \right\Vert_{M^{p'}(\R^2,A_{\alpha, \beta} \otimes \sigma_{\alpha, \beta})} \nonumber \\
&& = \left\Vert V_g \left( \W^{(\alpha, \beta)}_{g_1}(f) \cdot \overline{\W^{(\alpha, \beta)}_{g_2}(h)} \right) \right\Vert_{L^{p'}(\R^4,A_{\alpha, \beta} \otimes \sigma_{\alpha, \beta})} \nonumber \\
&& =  \left\Vert \left( \W^{(\alpha, \beta)}_{g_1}(f) \cdot \overline{\W^{(\alpha, \beta)}_{g_2}(h)} \right) * M_{\xi} g^*  \right\Vert_{L^{p'}(\R^4,A_{\alpha, \beta} \otimes \sigma_{\alpha, \beta})} \nonumber \\
&& \leq \left\Vert \W^{(\alpha, \beta)}_{g_1}(f) \cdot \overline{\W^{(\alpha, \beta)}_{g_2}(h)} \right\Vert_{L^1(\R^2,A_{\alpha, \beta} \otimes \sigma_{\alpha, \beta})} \left\Vert M_{\xi} g^*  \right\Vert_{L^{p'}(\R^2,A_{\alpha, \beta} \otimes \sigma_{\alpha, \beta})} \nonumber \\ 
&& \leq \left\Vert \W^{(\alpha, \beta)}_{g_1}(f) \right\Vert_{M^q(\R^2,A_{\alpha, \beta} \otimes \sigma_{\alpha, \beta})} \left\Vert \W^{(\alpha, \beta)}_{g_2}(h) \right\Vert_{M^{q'}(\R^2,A_{\alpha, \beta} \otimes \sigma_{\alpha, \beta})} \nonumber \\
&& = \left\Vert f *_{\alpha, \beta} \overline{\mathcal{M}^{(\alpha, \beta)}_\xi g_1} \right\Vert_{M^q(\R^2,A_{\alpha, \beta} \otimes \sigma_{\alpha, \beta})} \left\Vert h *_{\alpha, \beta} \overline{\mathcal{M}^{(\alpha, \beta)}_\xi g_2} \right\Vert_{M^{q'}(\R^2,A_{\alpha, \beta} \otimes \sigma_{\alpha, \beta})} \nonumber \\
&& \leq \Vert f \Vert_{M^q(\R,A_{\alpha, \beta})} \; \Vert g_1 \Vert_{M^1(\R,A_{\alpha, \beta})} \; \Vert h  \Vert_{M^{q'}(\R,A_{\alpha, \beta})} \; \Vert g_2 \Vert_{M^1(\R,A_{\alpha, \beta})}.
\end{eqnarray}
Thus from (\ref{eq47}) and (\ref{eq48}), we obtain
\[ \Vert \mathfrak{L}_{g_1, g_2}(\varsigma) \Vert_{\mathcal{B}(M^q(\R,A_{\alpha, \beta} ))} \leq \Vert \varsigma \Vert_{M^p(\R^2,A_{\alpha, \beta} \otimes \sigma_{\alpha, \beta})} \; \Vert g_1 \Vert_{M^1(\R,A_{\alpha, \beta})} \; \Vert g_2 \Vert_{M^1(\R,A_{\alpha, \beta})}. \]
\end{proof}

\subsection{Compactness of $\mathfrak{L}_{g_1, g_2}(\varsigma)$ for symbols in $M^1(\R^2,A_{\alpha, \beta} \otimes \sigma_{\alpha, \beta})$}

In this section, we prove that the localization operators $\mathfrak{L}_{g_1, g_2}(\varsigma): M^p(\R,A_{\alpha, \beta}) \to M^p(\R,A_{\alpha, \beta}), \; 1<p< \infty$ are compact for symbols $\varsigma$ in $M^1(\R^2,A_{\alpha, \beta} \otimes \sigma_{\alpha, \beta})$. Let us start with the following proposition.

\begin{proposition}
Let $\varsigma \in M^1(\R^2,A_{\alpha, \beta} \otimes \sigma_{\alpha, \beta})$ and $g_1, g_2 \in  M^1(\R,A_{\alpha, \beta}) \cap L^\infty(\R,A_{\alpha, \beta})$. Then, for fixed $1<p< \infty$, the localization operator $\mathfrak{L}_{g_1, g_2}(\varsigma): M^p(\R,A_{\alpha, \beta}) \to M^p(\R,A_{\alpha, \beta})$ is compact.
\end{proposition}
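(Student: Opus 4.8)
The plan is to deduce the result by interpolation, combining the already-established $L^2$-compactness of the operator with its boundedness on the whole modulation-space scale.

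First I would record the two ingredients. Since $\varsigma \in M^1(\R^2,A_{\alpha, \beta} \otimes \sigma_{\alpha, \beta})$, Theorem~\ref{th2} (applied with $p=1$) shows that $\mathfrak{L}_{g_1, g_2}(\varsigma)$ is compact on $L^2(\R,A_{\alpha, \beta}) = M^2(\R,A_{\alpha, \beta})$; in fact it lies in $S_1$ by Proposition~\ref{pro3}. Second, because $g_1, g_2 \in M^1(\R,A_{\alpha, \beta}) \cap L^\infty(\R,A_{\alpha, \beta})$, Proposition~\ref{pro5} guarantees that $\mathfrak{L}_{g_1, g_2}(\varsigma)$ extends to a bounded linear operator on $M^q(\R,A_{\alpha, \beta})$ for every $1 \le q \le \infty$; in particular it is bounded on $M^1(\R,A_{\alpha, \beta})$ and on $M^\infty(\R,A_{\alpha, \beta})$.

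Next I would invoke a compact-operator interpolation theorem of Krasnoselskii--Cwikel type on the complex interpolation scale of the spaces $M^q(\R,A_{\alpha, \beta})$. As already used in the proof of Theorem~\ref{th1}, these modulation spaces interpolate exactly like the corresponding (mixed-norm) Lebesgue spaces, so that $[M^{q_0},M^{q_1}]_\theta = M^{q_\theta}$ with $1/q_\theta = (1-\theta)/q_0 + \theta/q_1$. Interpolating the pair $(M^2, M^1)$ — on which $\mathfrak{L}_{g_1, g_2}(\varsigma)$ is compact, respectively bounded — yields compactness of $\mathfrak{L}_{g_1, g_2}(\varsigma)$ on $M^p(\R,A_{\alpha, \beta})$ for every $1 < p < 2$; interpolating the pair $(M^2, M^\infty)$ likewise yields compactness for every $2 < p < \infty$. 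Together with the trivial case $p=2$, this gives compactness on $M^p(\R,A_{\alpha, \beta})$ for all $1 < p < \infty$.

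The main obstacle is to put the compactness-interpolation step on a rigorous footing in this setting. One clean route is to observe that, via the short-time Fourier transform, each $M^q(\R,A_{\alpha, \beta})$ is a retract of a weighted mixed-norm $L^q$-space, so that Cwikel's theorem on interpolation of compact operators transfers to this scale; an alternative is a direct Krasnoselskii argument exploiting the $M^1 \to \mathcal{B}(M^p)$ estimate of Proposition~\ref{pro5}. A fully self-contained substitute, if one prefers to avoid interpolation of compactness, would be to approximate $\varsigma$ in $M^1(\R^2,A_{\alpha, \beta} \otimes \sigma_{\alpha, \beta})$ by symbols $\varsigma_n = V_g^{*} F_n$ with $F_n \in L^\infty$ of compact support, show that each $\mathfrak{L}_{g_1, g_2}(\varsigma_n)$ is compact on $M^p(\R,A_{\alpha, \beta})$ (for instance as a norm limit of finite-rank operators built from the reproducing-kernel structure of the windowed Opdam--Cherednik transform), and then pass to the limit using $\Vert \mathfrak{L}_{g_1, g_2}(\varsigma - \varsigma_n) \Vert_{\mathcal{B}(M^p(\R,A_{\alpha, \beta}))} \to 0$ from Proposition~\ref{pro5}; the interpolation argument is, however, the shortest.
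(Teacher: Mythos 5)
Your proposal is correct in outline but takes a genuinely different route from the paper. The paper argues directly: it takes a sequence $f_n \rightharpoonup 0$ weakly in $M^p(\R,A_{\alpha,\beta})$, bounds $\left| \mathfrak{L}_{g_1,g_2}(\varsigma)(f_n)(y)\right|$ pointwise by an integrable majorant built from $|\varsigma|$, $\Vert g_1\Vert_{M^\infty}$ and $|{g_2}_{x,\xi}^{(\alpha,\beta)}(-y)|$ (this is where $g_1,g_2\in M^1\cap L^\infty$ enters), observes that each integrand tends to $0$ by weak convergence, and concludes $\Vert \mathfrak{L}_{g_1,g_2}(\varsigma)(f_n)\Vert_{M^p}\to 0$ by dominated convergence; since $M^p$ is reflexive for $1<p<\infty$, this gives compactness. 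You instead combine the $S_1$/compactness result on $M^2=L^2$ with the $M^1$- and $M^\infty$-boundedness from Proposition~\ref{pro5} and interpolate compactness along the scale. Both ingredients you cite are available under the stated hypotheses, and the paper itself invokes ``interpolation of the compactness'' in the theorem that follows, so your tool is within the paper's own standards; you are also right to flag the compactness-interpolation step as the one needing justification, since preservation of compactness under the complex method is not automatic in general and must be secured here either via Krasnoselskii's theorem on the $L^p$-type scale together with the retract structure of $M^q$ under the STFT, or via a Cwikel-type result for the real method. What your route buys is brevity and reuse of already-proved estimates; what the paper's route buys is a self-contained argument that sidesteps the delicate interpolation-of-compactness question entirely and, moreover, makes transparent exactly where the hypothesis $g_1,g_2\in L^\infty(\R,A_{\alpha,\beta})$ is used (to produce the dominating function). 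Either proof is acceptable, provided you supply a precise citation for the interpolation step.
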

\begin{proof}
Let $\{ f_n \}_{n \in \mathbb{N}}$ be a sequence of functions in $M^p(\R,A_{\alpha, \beta})$ such that $f_n \rightharpoonup 0$ weakly in $M^p(\R,A_{\alpha, \beta})$ as $n \to \infty$. It is sufficient to prove that $\lim\limits_{n \to \infty} \Vert \mathfrak{L}_{g_1, g_2}(\varsigma)(f_n) \Vert_{M^p(\R,A_{\alpha, \beta})}=0$. From the relation (\ref{eq41}), we obtain 
\begin{eqnarray}\label{eq54}
&& \left| \mathfrak{L}_{g_1, g_2}(\varsigma)(f_n)(y) \right| \nonumber \\
&& \leq \iint_{\R^2} |\varsigma(x, \xi)| \left| \left\langle f_n, {g_1}_{x, \xi}^{(\alpha, \beta)} (- \; \cdot) \right\rangle_{L^2(\R,A_{\alpha, \beta})} \right|  \left| {g_2}_{x, \xi}^{(\alpha, \beta)} (- y) \right| \; d(A_{\alpha, \beta} \otimes \sigma_{\alpha, \beta})(x,\xi).
\end{eqnarray}
Since $f_n \rightharpoonup 0$ weakly in $M^p(\R,A_{\alpha, \beta})$, we deduce that
\begin{equation}\label{eq55}
\lim_{n \to \infty} |\varsigma(x, \xi)| \left| \left\langle f_n, {g_1}_{x, \xi}^{(\alpha, \beta)} (- \; \cdot) \right\rangle_{L^2(\R,A_{\alpha, \beta})} \right|  \left| {g_2}_{x, \xi}^{(\alpha, \beta)} (- y) \right|=0, \quad  \text{for all} \;\; x, y, \xi \in \R.
\end{equation}
Moreover, as $f_n \rightharpoonup 0$ weakly in $M^p(\R,A_{\alpha, \beta})$ as $n \to \infty$, then there exists a constant $C>0$ such that $\Vert f_n \Vert_{M^p(\R,A_{\alpha, \beta})} \leq C$. Hence, for all  $x, y, \xi \in \R$, we obtain 
\begin{equation}\label{eq56}
|\varsigma(x, \xi)| \left| \left\langle f_n, {g_1}_{x, \xi}^{(\alpha, \beta)} (- \; \cdot) \right\rangle_{L^2(\R,A_{\alpha, \beta})} \right|  \left| {g_2}_{x, \xi}^{(\alpha, \beta)} (- y) \right|  \leq C |\varsigma(x, \xi)| \; \Vert g_1 \Vert_{M^\infty(\R,A_{\alpha, \beta})} \left| {g_2}_{x, \xi}^{(\alpha, \beta)} (- y) \right|.
\end{equation}
Further, using Fubini's theorem, we get
\begin{eqnarray}\label{eq57}
&& \Vert \mathfrak{L}_{g_1, g_2}(\varsigma)(f_n) \Vert_{M^p(\R,A_{\alpha, \beta})} \nonumber \\
&& = \left\| V_g \left( \mathfrak{L}_{g_{1}, g_{2}}(\varsigma)\left(f_{n}\right) \right) \right\|_{L^{p}\left(\mathbb{R}^2, A_{\alpha, \beta}\right)} \nonumber \\
&& =\left\|  \left( \mathfrak{L}_{g_{1}, g_{2}}(\varsigma)\left(f_{n}\right)   \right) * M_{\xi} g^{*} \right\|_{L^{p}\left(\mathbb{R}^2, A_{\alpha, \beta}\right)} \nonumber \\
&& \leq \left\|  \mathfrak{L}_{g_{1}, g_{2}}(\varsigma)\left(f_{n}\right) \right\|_{L^{1}\left(\mathbb{R}, A_{\alpha, \beta}\right)} \left\| M_{\xi} g^{*} \right\|_{L^{p}\left(\mathbb{R}, A_{\alpha, \beta}\right)} \nonumber \\
&& \leq  \int_{\R} \iint_{\R^2} |\varsigma(x, \xi)| \left| \left\langle f_n, {g_1}_{x, \xi}^{(\alpha, \beta)} (- \; \cdot) \right\rangle_{L^2(\R,A_{\alpha, \beta})} \right|  \left| {g_2}_{x, \xi}^{(\alpha, \beta)} (- y) \right| \; d(A_{\alpha, \beta} \otimes \sigma_{\alpha, \beta})(x,\xi) \; A_{\alpha, \beta}(y) \;dy  \nonumber \\
&& \leq C \; \Vert g_1 \Vert_{M^\infty(\R,A_{\alpha, \beta})} \iint_{\R^2} |\varsigma(x, \xi)|  \int_{\R} \left| {g_2}_{x, \xi}^{(\alpha, \beta)} (- y) \right| \; A_{\alpha, \beta}(y) dy \; d(A_{\alpha, \beta} \otimes \sigma_{\alpha, \beta})(x,\xi) \nonumber \\
&& \leq C \; \Vert g_1 \Vert_{L^\infty(\R,A_{\alpha, \beta})} \; \Vert g_2 \Vert_{M^1(\R,A_{\alpha, \beta})} \; \Vert \varsigma \Vert_{M^1(\R^2,A_{\alpha, \beta} \otimes \sigma_{\alpha, \beta})} < \infty. 
\end{eqnarray}
Thus, using the Lebesgue dominated convergence theorem and the relations (\ref{eq54}) -- (\ref{eq57}), we obtain that 
\[\lim\limits_{n \to \infty} \Vert \mathfrak{L}_{g_1, g_2}(\varsigma)(f_n) \Vert_{M^p(\R,A_{\alpha, \beta})}=0.  \]
This completes the proof. 
\end{proof}

\begin{theorem}
Let $\varsigma \in M^1(\R^2,A_{\alpha, \beta} \otimes \sigma_{\alpha, \beta})$,  $g_1 \in  M^{p'}(\R,A_{\alpha, \beta})$ and $g_2 \in  M^p(\R,A_{\alpha, \beta})$, for $1 < p < \infty$. Then the localization operator $\mathfrak{L}_{g_1, g_2}(\varsigma): M^p(\R,A_{\alpha, \beta}) \to M^p(\R,A_{\alpha, \beta})$ is compact.
\end{theorem}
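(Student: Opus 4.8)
By Proposition \ref{pro4}, $\mathfrak{L}_{g_1,g_2}(\varsigma)$ is a bounded linear operator on $M^p(\R,A_{\alpha,\beta})$; the plan is to upgrade this to compactness by showing that $\mathfrak{L}_{g_1,g_2}(\varsigma)$ is \emph{completely continuous}, i.e.\ that it carries weakly null sequences to norm null sequences. Since $1<p<\infty$ we have $(M^p(\R,A_{\alpha,\beta}))'=M^{p'}(\R,A_{\alpha,\beta})$ with $p,p'<\infty$, so $M^p(\R,A_{\alpha,\beta})$ is reflexive and complete continuity is equivalent to compactness. So let $f_n\rightharpoonup 0$ weakly in $M^p(\R,A_{\alpha,\beta})$ and put $C_0:=\sup_n\Vert f_n\Vert_{M^p(\R,A_{\alpha,\beta})}<\infty$. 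With the abbreviation $\Phi_n(x,\xi):=\varsigma(x,\xi)\,\W^{(\alpha,\beta)}_{g_1}(f_n)(x,\xi)=\varsigma(x,\xi)\,\langle f_n,\,{g_1}_{x,\xi}^{(\alpha,\beta)}(-\,\cdot)\rangle_{L^2(\R,A_{\alpha,\beta})}$ (the pairing being that of $M^p(\R,A_{\alpha,\beta})$ with $M^{p'}(\R,A_{\alpha,\beta})$ when $f_n\notin L^2$), formula \eqref{eq41} reads $\mathfrak{L}_{g_1,g_2}(\varsigma)(f_n)=\iint_{\R^2}\Phi_n(x,\xi)\,{g_2}_{x,\xi}^{(\alpha,\beta)}(-\,\cdot)\;d(A_{\alpha,\beta}\otimes\sigma_{\alpha,\beta})(x,\xi)$.

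The first step is to show $\Phi_n\to 0$ in $L^1(\R^2,A_{\alpha,\beta}\otimes\sigma_{\alpha,\beta})$. Since $g_1\in M^{p'}(\R,A_{\alpha,\beta})$, each vector ${g_1}_{x,\xi}^{(\alpha,\beta)}(-\,\cdot)$ lies in $M^{p'}(\R,A_{\alpha,\beta})=(M^p(\R,A_{\alpha,\beta}))'$ with $\Vert {g_1}_{x,\xi}^{(\alpha,\beta)}(-\,\cdot)\Vert_{M^{p'}(\R,A_{\alpha,\beta})}\le C_{\alpha,\beta}\Vert g_1\Vert_{M^{p'}(\R,A_{\alpha,\beta})}$ uniformly in $(x,\xi)$ (the modulation-space counterpart of \eqref{eq12} and \eqref{eq09}, already implicit in \eqref{eq52}); hence H\"older's inequality gives $|\Phi_n(x,\xi)|\le C_0\,C_{\alpha,\beta}\Vert g_1\Vert_{M^{p'}(\R,A_{\alpha,\beta})}\,|\varsigma(x,\xi)|$ for all $n$, a fixed dominating function which is integrable because $\varsigma\in M^1(\R^2,A_{\alpha,\beta}\otimes\sigma_{\alpha,\beta})\subset L^1(\R^2,A_{\alpha,\beta}\otimes\sigma_{\alpha,\beta})$. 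Since in addition $f_n\rightharpoonup 0$ in $M^p(\R,A_{\alpha,\beta})$ and ${g_1}_{x,\xi}^{(\alpha,\beta)}(-\,\cdot)\in(M^p(\R,A_{\alpha,\beta}))'$, we have $\Phi_n(x,\xi)\to 0$ for every $(x,\xi)$, and the dominated convergence theorem gives $\Vert\Phi_n\Vert_{L^1(\R^2,A_{\alpha,\beta}\otimes\sigma_{\alpha,\beta})}\to 0$.

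The second step is to observe that the synthesis map $\Phi\mapsto\iint_{\R^2}\Phi(x,\xi)\,{g_2}_{x,\xi}^{(\alpha,\beta)}(-\,\cdot)\;d(A_{\alpha,\beta}\otimes\sigma_{\alpha,\beta})(x,\xi)$ is bounded from $L^1(\R^2,A_{\alpha,\beta}\otimes\sigma_{\alpha,\beta})$ into $M^p(\R,A_{\alpha,\beta})$ with norm at most $C_{\alpha,\beta}\Vert g_2\Vert_{M^p(\R,A_{\alpha,\beta})}$: since $g_2\in M^p(\R,A_{\alpha,\beta})$ we have $\Vert {g_2}_{x,\xi}^{(\alpha,\beta)}(-\,\cdot)\Vert_{M^p(\R,A_{\alpha,\beta})}\le C_{\alpha,\beta}\Vert g_2\Vert_{M^p(\R,A_{\alpha,\beta})}$ uniformly in $(x,\xi)$, the map $(x,\xi)\mapsto\Phi(x,\xi)\,{g_2}_{x,\xi}^{(\alpha,\beta)}(-\,\cdot)$ is a Bochner-integrable $M^p(\R,A_{\alpha,\beta})$-valued function whose integral coincides with the pointwise formula \eqref{eq41} tested against $M^{p'}(\R,A_{\alpha,\beta})$ (Fubini), and Minkowski's integral inequality yields the stated estimate. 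Taking $\Phi=\Phi_n$ and combining with the first step, $\Vert\mathfrak{L}_{g_1,g_2}(\varsigma)(f_n)\Vert_{M^p(\R,A_{\alpha,\beta})}\le C_{\alpha,\beta}\Vert g_2\Vert_{M^p(\R,A_{\alpha,\beta})}\Vert\Phi_n\Vert_{L^1(\R^2,A_{\alpha,\beta}\otimes\sigma_{\alpha,\beta})}\to 0$, so $\mathfrak{L}_{g_1,g_2}(\varsigma)$ is completely continuous and therefore compact.

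I expect the actual work to concentrate in the two uniform estimates $\Vert {g_i}_{x,\xi}^{(\alpha,\beta)}(-\,\cdot)\Vert_{M^r(\R,A_{\alpha,\beta})}\le C_{\alpha,\beta}\Vert g_i\Vert_{M^r(\R,A_{\alpha,\beta})}$ for $r\in\{p,p'\}$ — namely the boundedness, uniform in $x$ and $\xi$, of the generalized translations $\tau_x^{(\alpha,\beta)}$ and of the modulation $\M^{(\alpha,\beta)}_\xi$ on the weighted modulation spaces, the $M^r$-analogue of \eqref{eq12} and \eqref{eq09} — together with the routine measurability needed to legitimize the Bochner integral and Minkowski's inequality in $M^p(\R,A_{\alpha,\beta})$. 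An appealing feature of this route is that $g_1$ and $g_2$ are never perturbed, so the nonlinear dependence of $\M^{(\alpha,\beta)}_\xi$ on its argument plays no role; the alternative of approximating $g_1$ in $M^{p'}(\R,A_{\alpha,\beta})$ and $g_2$ in $M^p(\R,A_{\alpha,\beta})$ by functions in $\mathcal{S}(\R)\subset M^1(\R,A_{\alpha,\beta})\cap L^\infty(\R,A_{\alpha,\beta})$ (dense since $p,p'<\infty$), invoking the preceding proposition for the resulting compact approximants, and passing to the limit in operator norm would instead force one to prove operator-norm continuity of $\mathfrak{L}_{g_1,g_2}(\varsigma)$ in the windows, running straight into that nonlinearity — handled, if one chooses that path, via a H\"older-$\tfrac12$ estimate built from $|\sqrt a-\sqrt b|\le\sqrt{|a-b|}$, Plancherel's formula \eqref{newpf}, and the integral-invariance of $\tau_\xi^{(\alpha,\beta)}$ from \eqref{eq08}.
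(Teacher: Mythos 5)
Your proof is correct (granting the same uniform bounds $\Vert {g_i}_{x,\xi}^{(\alpha,\beta)}(-\,\cdot)\Vert_{M^r(\R,A_{\alpha,\beta})}\leq C_{\alpha,\beta}\Vert g_i\Vert_{M^r(\R,A_{\alpha,\beta})}$ that the paper itself already uses implicitly in the H\"older estimate \eqref{eq52}), but it takes a genuinely different route from the paper. The paper deduces the theorem from the \emph{preceding} proposition: that proposition establishes compactness on $M^p(\R,A_{\alpha,\beta})$ for windows $g_1,g_2\in M^1(\R,A_{\alpha,\beta})\cap L^\infty(\R,A_{\alpha,\beta})$ by exactly your complete-continuity mechanism (weakly null $\Rightarrow$ pointwise null $\Rightarrow$ dominated convergence), and the theorem is then obtained by passing to the adjoint $\mathfrak{L}_{g_1,g_2}(\overline{\varsigma})$ to get compactness on $M^{p'}(\R,A_{\alpha,\beta})$ and interpolating compactness between $M^p$ and $M^{p'}$. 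You instead run the dominated-convergence argument directly under the stated hypotheses $g_1\in M^{p'}(\R,A_{\alpha,\beta})$, $g_2\in M^p(\R,A_{\alpha,\beta})$, using reflexivity of $M^p$ for $1<p<\infty$ to convert complete continuity into compactness, and you never need the duality or interpolation steps. What your approach buys is significant: the paper's reduction never reconciles the theorem's window hypotheses with the $M^1\cap L^\infty$ hypothesis of the proposition it invokes (one would need a density argument in the windows, which, as you correctly observe, collides with the nonlinear dependence of $\M^{(\alpha,\beta)}_\xi g$ on $g$), so your direct argument is actually more faithful to the statement being proved and more self-contained. The price is that you must justify the Bochner-integral/Minkowski step and the two uniform translation--modulation bounds on $M^p$ and $M^{p'}$ explicitly, which you flag honestly as where the real work lies.
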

\begin{proof}
Let $p'$ be the conjugate exponent of $p$. We first show that the conclusion of the previous proposition holds for $p'$. The operator $\mathfrak{L}_{g_1, g_2}(\varsigma): M^{p'}(\R,A_{\alpha, \beta}) \to M^{p'}(\R,A_{\alpha, \beta})$ is the adjoint of the operator $\mathfrak{L}_{g_1, g_2}(\overline{\varsigma}): M^p(\R,A_{\alpha, \beta}) \to M^p(\R,A_{\alpha, \beta})$, which is compact by the previous proposition. Hence, by the duality properties of modulation spaces, $\mathfrak{L}_{g_1, g_2}(\varsigma): M^{p'}(\R,A_{\alpha, \beta}) \to M^{p'}(\R,A_{\alpha, \beta})$ is compact. Finally, using the interpolation of the compactness on $M^p(\R,A_{\alpha, \beta})$ and on $M^{p'}(\R,A_{\alpha, \beta})$, the proof is complete. 
\end{proof}

\section*{Acknowledgments}
The author is deeply indebted to Prof. Nir Lev for several fruitful discussions and generous comments. The author wishes to thank the anonymous referees for their helpful
comments and suggestions that helped to improve the quality of the paper.

\end{document}